\documentclass{article}
\usepackage[left=4cm, right=4cm, top=4cm, bottom=2.25cm]{geometry}
\usepackage{amsmath,color}
\usepackage{amsfonts,amsthm}
\usepackage{amssymb,enumerate,enumitem,verbatim}
\usepackage[pdftex]{graphicx}
\usepackage{hyperref}
\usepackage{amsmath,setspace,scalefnt}
\usepackage[usenames,dvipsnames,svgnames,table]{xcolor}
\usepackage{amsfonts,amsthm}
\usepackage{amssymb,enumitem,verbatim}
\usepackage{graphicx}
\usepackage{tikz,caption,subcaption}
\usetikzlibrary{calc}
\usepackage[titles]{tocloft}
\usepackage[capitalise]{cleveref}
\usepackage{thmtools}
\usepackage{thm-restate}

\newcounter{propcounter}
\newcounter{restatedpropcounter}
\newcounter{restatedpropcounter2}

\global\long\def\N{\mathbb{N}}
\global\long\def\PP{\mathbb{P}}
\global\long\def\EE{\mathbb{E}}
\global\long\def\eps{\varepsilon}
\newcommand{\itref}[1]{\emph{\ref{#1}}}
\newcommand{\eref}[1]{\emph{\ref{#1}}}

\newtheorem{lemma}{Lemma}[section]
\newtheorem{corollary}[lemma]{Corollary}
\newtheorem{theorem}[lemma]{Theorem}

\newtheorem{conjecture}[lemma]{Conjecture}

\theoremstyle{definition}
\newtheorem{definition}[lemma]{Definition}
\newtheorem{claim}[lemma]{Claim}

\addtolength{\textwidth}{1in}
\addtolength{\hoffset}{-0.5in}
\addtolength{\textheight}{0.5in}
\addtolength{\voffset}{-0.7in}

\newif\ifgridpicone
\newif\ifgridpictwo
\newif\ifgridpicbox
\newif\ifgridpicvx
\newif\ifgridpictrees

\global\long\def\eps{\varepsilon}

\title{Packing the largest trees in the tree packing conjecture}
\author{Barnab\'as Janzer\thanks{Mathematical Institute, University of Oxford, Oxford, UK, OX2 6GG.
barnabas.janzer@magd.ox.ac.uk}
\ and Richard\ Montgomery\thanks{Mathematics Institute, University of Warwick, Coventry, UK, CV4 7AL. richard.montgomery@warwick.ac.uk. 
\newline
\textcolor{white}{.}\hspace{0.15cm}$^*$$^\dagger$\! Supported by the European Research Council (ERC) under the European Union Horizon 2020 research and innovation programme (grant agreement No. 947978).}}


\begin{document}

\maketitle

\begin{abstract}
The famous tree packing conjecture of Gy\'arf\'as from 1976 says that any sequence of trees $T_1,\ldots,T_n$ such that $|T_i|=i$ for each $i\in [n]$ packs into the complete $n$-vertex graph $K_n$. Packing even just the largest trees in such a sequence has proven difficult, with Bollob\'as drawing attention to this in 1995 by conjecturing that, for each $k$, if $n$ is sufficiently large then the largest $k$ trees in any such sequence can be packed into $K_n$. This has only been shown for $k\leq 5$, by \.{Z}ak, despite many partial results and much related work on the full tree packing conjecture.
We prove Bollob\'as's conjecture, by showing that, moreover, a linear number of the largest trees can be packed in the tree packing conjecture.
\end{abstract}


\section{Introduction}\label{sec:intro}
We say a collection of graphs $F_1,\ldots,F_r$ \emph{packs} into a graph $G$ if $G$ contains edge-disjoint copies of $F_1,\ldots,F_r$. Where such subgraphs contain every edge of $G$, we say this is a \emph{perfect packing}, also known as a \emph{decomposition}. A lot of work in graph packing has centred on two major conjectures on packing trees into complete graphs. The first of these, from 1967, is Ringel's conjecture that every $(n+1)$-vertex tree can be packed $2n+1$ times into the $(2n+1)$-vertex complete graph $K_{2n+1}$,
which was recently proved for large $n$ by the second author, Pokrovskiy and Sudakov~\cite{montgomery2021proof}, with an alternative proof given by Keevash and Staden~\cite{keevash2020ringel}. The second major conjecture, and the subject of this paper, is the following conjecture of Gy\'arf\'as (see~\cite{gyar}) from 1976, known as the tree packing conjecture (TPC).
\begin{conjecture}[The tree packing conjecture (TPC)]\label{conj:tpc}
Any sequence of trees $T_1,\ldots,T_n$ such that $|T_i|=i$ for each $i\in [n]$ packs into the complete $n$-vertex graph $K_n$.
\end{conjecture}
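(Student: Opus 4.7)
My plan is to prove the tree packing conjecture by combining this paper's main theorem---which packs the largest $cn$ trees for some absolute constant $c>0$---with an absorbing template reserved in $K_n$ at the outset, together with quasi-random packing techniques for the remaining trees. Fix constants $0<\alpha\ll c\ll 1$ and partition the index set $[n]$ into three ranges: a \emph{large} range $L=\{(1-c)n+1,\ldots,n\}$, a \emph{medium} range $M=\{\alpha n+1,\ldots,(1-c)n\}$, and a \emph{small} range $S=\{1,\ldots,\alpha n\}$. These are packed in order large, medium, small, with the smallest trees ultimately absorbed into a pre-reserved template.

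The first move is to choose an \emph{absorbing subgraph} $A\subseteq K_n$ whose edge count equals $\sum_{i\in S}(i-1)$ and which has the property that, for any sufficiently quasi-random graph $Q$ on $n$ vertices of the appropriate residual density, the graph $A\cup Q$ decomposes into copies of $\{T_i:i\in S\}$. Such an $A$ can be designed using templates in the spirit of the absorbing method, adapted to tree decompositions (in particular, building in enough flexibility via random bipartite expanders and leaf-swap gadgets to handle the whole family $\{T_i:i\in S\}$ simultaneously). I would then extend this paper's main theorem from $K_n$ to the slightly sparser host graph $K_n\setminus A$, which should require only minor adjustments since a random deletion of an $o(1)$-proportion of edges leaves a dense quasi-random graph and the argument ought to be robust to such perturbations. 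For the medium range, one argues that the leftover graph $H$ after the large packing is $\eps$-quasi-random on $n$ vertices, and packs $\{T_i:i\in M\}$ into $H$ using results for packing bounded-degree trees into quasi-random graphs (in the spirit of Allen--B\"ottcher--Hladk\'y--Piguet and Joos--Kim--K\"uhn--Osthus), combined with a separate treatment of any few high-maximum-degree trees in $M$, which are necessarily spider-like with a linear number of leaves and can be embedded directly. Finally, the leftover after the medium packing, together with $A$, is decomposed into $\{T_i:i\in S\}$ using the absorbing property of $A$.

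The hardest step is almost certainly ensuring that the large-tree packing can be arranged so that its leftover is genuinely quasi-random, with tight control on codegrees and on the degree sequence. This paper produces \emph{some} packing, but for the second phase we require a \emph{typical} one. I would address this by randomising the choices in the packing algorithm---for instance, choosing the embedding of each $T_i\in L$ uniformly from a large family of valid embeddings compatible with the absorber $A$---and using martingale concentration inequalities to show that the resulting leftover has the required pseudo-random properties with positive probability. A secondary obstacle is the interface between the medium and small phases: the absorber $A$ must tolerate essentially any leftover that the medium-tree packing produces, so $A$ must be designed from the outset with enough decomposition flexibility, which is the standard "absorption matching" difficulty and the place where the global accounting is most delicate. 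Bridging these two interfaces---randomising the large packing to land in the quasi-random regime, and designing an absorber robust to the medium leftover---is, in my view, what separates this paper's linear-fraction result from the full conjecture.
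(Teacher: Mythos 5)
The statement you are asked about is Conjecture~\ref{conj:tpc} itself: it is an open conjecture, stated as such in the paper, and the paper does not prove it. What the paper proves is Theorem~\ref{theorem_treepacking}, namely that the largest $\eps n$ trees pack; so there is no ``paper's own proof'' to compare against, and your proposal must be judged as a standalone argument for the full conjecture. As such it is a research programme, not a proof, and each of its three phases rests on an unproved and genuinely difficult claim. First, the absorber: you need a graph $A$ with the property that $A\cup Q$ decomposes into the \emph{arbitrary} family $\{T_i:i\in S\}$ for every suitably quasi-random leftover $Q$, but no absorption construction of this kind is known for tree decompositions without degree restrictions --- the family $\{T_i:i\in S\}$ can contain stars, paths and spiders in any mixture, and ``leaf-swap gadgets'' handling all of these simultaneously is precisely the kind of flexibility whose existence is the open problem. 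Second, extending Theorem~\ref{theorem_treepacking} to $K_n\setminus A$ is not a ``minor adjustment'': the paper's argument is exact, not approximate --- principle \ref{prop:embeddingprinciple} and the count showing each $v_i$ retains precisely $|S_i|-1$ unused rightward edges both rely on the host being complete, and deleting the $\Theta(\alpha^2 n^2)$ edges of $A$ destroys this accounting. Third, the medium range is where the proposal quietly assumes the hardest part: the trees $T_i$ with $i\in M$ carry no degree bound, and there may be $\Theta(n)$ of them with maximum degree $\Theta(n)$; the quoted quasi-random packing results (Joos--Kim--K\"uhn--Osthus, Allen--B\"ottcher--Clemens--Hladk\'y--Piguet--Taraz) require maximum degree $O(n/\log n)$, and ``embedding the high-degree trees directly'' while keeping the leftover quasi-random is exactly the difficulty the present paper spends all of its machinery overcoming for just the top $\eps n$ trees. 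Finally, the leftover of the paper's packing is deliberately \emph{not} quasi-random (all star edges point rightward from $v_1,\dots,v_r$), and the claim that randomising the embeddings yields a pseudo-random leftover with controlled codegrees is asserted, not established. In short: the architecture (large/medium/small with an absorber) is a reasonable way to think about the problem, but every interface you identify as ``the hardest step'' is an open problem, so this is a plan of attack rather than a proof.
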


Note that if the packing in Conjecture~\ref{conj:tpc} exists then it will be a perfect packing. Early results on the tree packing conjecture showed that this packing is possible if the trees all belong to some limited subclass. Indeed,  Gy\'arf\'as and Lehel~\cite{gyar} showed that the TPC holds if all but at most 2 of the trees are stars, or
if all the trees are stars or paths.
Zaks and Liu~\cite{zaks1977decomposition} gave an alternative proof that the TPC holds if all the trees are stars or paths, while Hobbs (see~\cite{handbook}) proved that the TPC holds if all the trees are stars or double stars.
Extending a result of Straight~\cite{straight1979packing}, Fishburn~\cite{fishburn1983packing} showed that the TPC holds for all $n\leq 9$, and also extended the result of Hobbs to cover some more very specialised classes of trees.

In 1983, Bollob\'as~\cite{bollotrees} showed that the smallest $\lfloor n/\sqrt{2}\rfloor$ trees in the TPC can be packed into $K_n$, by simply packing them greedily in descending order of size, and observed that, moreover, the smallest $\lfloor \sqrt{3}n/2\rfloor$ trees can be packed if the Erd\H{o}s-S\'os conjecture holds. However, packing the largest trees in the TPC is surprisingly difficult. In 1987, Hobbs, Bourgeois and Kasiraj~\cite{hobbs1987packing} showed that the largest 3 trees can always be packed in the TPC, while, in recent years, \.{Z}ak~\cite{zak} has shown that the largest 5 trees can be packed when $n$ is large.
Thus, the following conjecture of Bollob\'as~\cite{handbook} from 1995,
 is not as innocuous as it might seem: that, for each $k$, there is some $n_k$ such that, for each $n\geq n_k$, the largest $k$ trees in the TPC can be packed into $K_n$. This conjecture does, however, become significantly easier with only a small alteration. That is to say, Balogh and Palmer~\cite{palmer} were able to show that if the largest tree is skipped, then the next $\frac{1}{10}n^{1/4}$ largest trees can be packed in the TPC (for large $n$), or, if none of the trees are stars, then the $\frac{1}{10}n^{1/4}$ largest trees can be packed in the TPC (again, for large $n$).

Within the last decade, a substantial amount of work has approached the tree packing conjecture by imposing an additional maximum degree condition on the trees. We will only mention the results most directly applicable towards the TPC, but many of these results hold for a wider class of subgraphs than trees or for a suitably quasirandom host graph (for more details on this and other related results see \cite{allen2021tree}). When quoting these results, by an \emph{approximate version of the TPC} we mean a result showing that $T_1,\ldots,T_{(1-o(1))n}$ can be packed into $K_n$ in Conjecture~\ref{conj:tpc}. The first such version was shown by B{\"o}ttcher, Hladk{\'y}, Piguet and Taraz~\cite{bottcher2016approximate} in 2016, who, for each $\Delta\in \N$, gave an approximate version of the TPC when the trees all have maximum degree at most $\Delta$.
Joos, Kim, K{\"u}hn and Osthus~\cite{joos2019optimal} then showed that, for each $\Delta$ and sufficiently large $n$, the TPC holds when the trees all have maximum degree at most $\Delta$.
Ferber and Samotij~\cite{ferber2019packing} extended the approximate results in 2019, by showing there is some $c>0$ for which an approximate version of the TPC holds when all the trees have maximum degree at most $c n/\log n$.
Finally, Allen, B{\"o}ttcher, Clemens, Hladk{\'y}, Piguet and Taraz~\cite{allen2021tree} used very substantial methods to prove there is some $c>0$ for which the TPC holds when all the trees have maximum degree at most $cn/\log n$.

Here, we concentrate on packing the largest trees for the tree packing conjecture, with no additional imposition on the trees packed, and show that the $\Omega(n)$ largest trees will pack into $K_n$, as follows.

\begin{theorem}\label{theorem_treepacking}
There exists a constant $\eps>0$ such that the following holds with $r=\eps n$ for all $n$. If ${T}_{n-r+1},\dots,{T}_{n}$ are trees with $|{T}_i|=i$ for each $n-r<i\leq n$, then ${T}_{n-r+1},\dots,{T}_{n}$ pack into $K_n$.
\end{theorem}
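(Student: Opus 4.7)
The plan is to pack the trees iteratively in decreasing order of size $T_n, T_{n-1}, \ldots, T_{n-r+1}$, while maintaining the invariant that the graph $E^*$ of edges already used has maximum degree $O(\eps n)$ at every vertex of $K_n$. This is the correct scale because the total number of edges summed across all trees is $\sum_{i=n-r+1}^{n}(i-1) = \Theta(\eps n^2)$, giving an average degree of $\Theta(\eps n)$ per vertex. Maintaining this invariant is crucial: at each step, the host graph $G_i := K_n \setminus E^*$ still has minimum degree $(1-O(\eps))n$, which is only just enough to embed the next tree $T_i$ (of size at most $n$).

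To embed $T_i$ itself, I would split it into a small \emph{high-degree core} $C_i$ containing all vertices of degree at least $\Delta := n^{1/2}$ together with a small connecting skeleton, and a \emph{peripheral forest} $P_i := T_i \setminus C_i$ of maximum degree less than $\Delta$. Since $T_i$ has at most $n-1$ edges, $|C_i|=O(n/\Delta)=O(\sqrt{n})$, so the cores are small. The strategy is then: first embed $C_i$ at vertices of $K_n$ that are currently lightly used and expected to remain so, and then extend the embedding by placing the peripheral forest $P_i$ via a random/greedy procedure that exploits the bounded degree of $P_i$ and the high minimum degree of the remaining host, similar in spirit to the bounded-degree tree packing methods of Joos, Kim, K\"uhn and Osthus~\cite{joos2019optimal} and Allen et al.~\cite{allen2021tree}.

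The hardest step will be to simultaneously control all of the trees: ensuring that the chosen core embeddings are edge-disjoint, that each extension to the peripheral part succeeds, and that the overall degree usage in $E^*$ stays balanced so that the invariant persists across all $r=\eps n$ iterations. For trees where the core contains a very-high-degree vertex (say, of degree close to $n$), there are few vertices in $K_n$ that can host it, so the placement of such cores is quite constrained and must be planned globally in advance; a random assignment of the high-degree vertices to locations in $K_n$, together with a concentration argument showing balanced degree usage, is a natural candidate. A further remedy I would pursue is to reserve a small absorbing structure in $K_n$ at the outset, which can be used to repair any trees that cannot be completed greedily---for instance, by redirecting the last few leaves of each tree through the absorber. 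The most delicate case is when $T_i$ is almost spanning and has a complex high-degree structure, since then there is very little slack either in the host graph or in the choice of embedding, and the compatibility between the pre-planned core embedding and the greedy peripheral extension must be argued with care.
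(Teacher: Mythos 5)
There is a genuine gap: your proposal never engages with the central obstruction, which is the interaction between stars and paths. Suppose the sequence alternates between near-spanning paths and near-spanning stars. After embedding $j$ of the trees in decreasing order of size, a total of roughly $jn$ edges have been used, so the \emph{average} used degree is about $2j$. But if the next tree $T_i$ is a star, its centre must go to a vertex with at least $i-1$ unused incident edges, i.e.\ with used degree at most $n-i=j$ --- half the average. Worse, each spanning path forces degree $2$ at every vertex except its two endpoints, so if the paths are embedded greedily or ``in a balanced way'' (as your concentration argument would give), essentially \emph{every} vertex ends up with used degree $\approx 2j$ and no valid star centre exists at all. Your invariant $\Delta(E^*)=O(\eps n)$ is therefore far too weak: what is needed is a designated set of $r$ vertices each of which is touched by \emph{at most one} edge of (almost) every previously embedded tree, which cannot be achieved by random placement plus concentration, since the required degree is below the mean, not merely below a large-deviation threshold. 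This is exactly why the paper orders $V(K_n)$, reserves $v_i$ as the centre of the $i$th star, and embeds the path-like trees in pairs with ``turnaround vertices'' so that each reserved vertex accumulates at most one rightward edge per path-like tree --- a deliberate combinatorial scheme rather than a balancing argument. Your other devices do not repair this: the core/periphery split and the bounded-degree machinery of Joos--Kim--K\"uhn--Osthus address trees of bounded maximum degree (where this tension does not arise), and an absorber that ``redirects the last few leaves'' cannot manufacture a vertex with $n-O(\eps n)$ unused incident edges.

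A secondary, related issue: your claim that minimum degree $(1-O(\eps))n$ in the remaining host ``is only just enough to embed the next tree'' is not true for spanning or near-spanning trees with high-degree vertices --- a spanning star needs a vertex of full remaining degree, not just high minimum degree --- so the reduction to a minimum-degree condition loses exactly the information the argument must preserve. The paper handles this by first proving the theorem when every high-leaf (``star-like'') tree is literally a star (Theorem~\ref{theorem_treepacking_starlikestars}), and only afterwards swapping the embedded stars for the true star-like trees via a Havet--Reed--Stein--Wood-type argument; some reduction of this kind, or an equally explicit protection of the prospective high-degree locations, is unavoidable.
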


In particular, then, Bollob\'as's weak version of the tree packing conjecture is true.
We have not attempted to optimise the value of $\eps$ in Theorem~\ref{theorem_treepacking}, and in several places there is the potential for improvement, particularly by randomising some embeddings which we do greedily. However, if $(1-o(1))n$ trees in the TPC are to be packed then it is more approachable to do this starting with the smallest trees, that is, to give an approximate version of the TPC without the degree bound in the versions quoted above (or, perhaps, as a next step, using a degree bound linear in $n$). Due to this, we present our methods without complicating them with any such optimisations.  In the next section, we sketch our methods before outlining the rest of this paper. We note that our theorem above does not say anything meaningful about small fixed values of $n$, since packing $\eps n$ trees when $n\leq 1/\eps$ is trivial.


\section{Proof sketch}\label{sec:sketch}
Suppose, for some $1/n\ll \eps\ll 1$ and $r=\eps n$, we have trees $T_1,\ldots,T_r$ with $|T_i|=n-i+1$ for each $i\in [r]$ (relabelling from Theorem~\ref{theorem_treepacking} to start with the largest trees) and we try to  embed them into the complete graph $K_n$ in order starting with $T_1$. For each $i\in [r-1]$, after embedding $i$  trees, in order to continue there must be some vertex with $\Delta(T_{i+1})$ neighbouring edges that we have not yet used. If $T_{i+1}$ is a star, then the vertex used for the centre of the star in $K_{n}$, $v_{i+1}$ say, must have at most $i$ neighbouring edges used when embedding $T_1,\ldots,T_i$. That is, on average, $v_{i+1}$ must be a leaf of the copy of each $T_j$, $j\leq i$. This average is hardest to maintain when these trees are paths with only two leaves that could be embedded to $v_{i+1}$. We naturally then have two extreme cases for each tree in the sequence: if a tree is a star then we cannot use too many neighbouring edges around where we want to embed its centre, whereas if a tree is a path it has to be embedded using two neighbouring edges for most of the vertices in $K_n$.
For our embeddings, we divide the trees into `star-like' trees (trees with many leaves) and `path-like' trees (trees with few leaves) and embed them very differently according to this.

Before we go further with our sketch, let us simplify things by noting that, indeed, stars are the most difficult `star-like' trees to appear in the sequence (as is suggested by the result of Balogh and Palmer~\cite{palmer} packing the $\frac{1}{10}n^{1/4}$ largest trees for large $n$ if none are stars). Essentially, given a sequence of trees to embed for Theorem~\ref{theorem_treepacking}, we replace the `star-like' tree $T_i$ in the sequence with a star of the same size, and embed the new sequence instead into $K_n$  (with some mild additional degree conditions on the embedded edges). The embedded stars reserve edges to possibly embed a particularly high degree vertex in the corresponding `star-like' tree, and, removing the embedded stars, we can then embed the corresponding trees. To embed the `star-like' trees here takes a little care, but essentially is done using a method inspired by Havet, Reed, Stein, and Wood~\cite{havet2020variant}, and we postpone any discussion of this to Section~\ref{section_starlike}. There, we prove Theorem~\ref{theorem_treepacking} from the following result, which is essentially a version of Theorem~\ref{theorem_treepacking} where all the star-like trees are stars.

\begin{theorem}\label{theorem_treepacking_starlikestars}
Let $1/n\ll \eps \ll \lambda \ll 1$ and $r=\eps n$, and let $v_1,\ldots,v_n$ be an ordering of $V(K_n)$. If ${T}_{1},\dots,{T}_{r}$ are trees with $|{T}_i|=n-i+1$ for each $i\in [r]$, such that each tree $T_i$ is either a star, or a tree with at most $\lambda n$ leaves, then ${T}_{1},\dots,{T}_{r}$ pack into $K_n$ so that every vertex which is not the centre of an embedded star has degree at most $n/10$ in the union of the embedded trees and, for each $i\in [r]$, if $T_i$ is a star then it is embedded into $K_n$ with its centre to the left (in the ordering $v_1,\dots,v_n$) of all of its leaves.
\end{theorem}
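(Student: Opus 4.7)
The plan is to embed the stars first (Phase 1) and then the path-like trees (Phase 2) into the residual graph, treating the two types very differently to exploit their respective structures.

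For Phase 1, let $S \subseteq [r]$ index the stars and list its elements as $i_1 < i_2 < \cdots < i_s$, where $s = |S|$. No two of our stars can share a centre, since the sizes $n - i_j + 1$ of any two of them sum to more than $n$, so their leaf sets cannot be made disjoint inside $K_n$. We therefore assign each star a distinct centre, and a convenient choice is $c_{i_j} := v_j$, which satisfies the ``centre-to-the-left'' requirement because each star then has $n - i_j$ leaves to be chosen from $\{v_{j+1}, \ldots, v_n\}$, a set of size $n - j \geq n - i_j$. For \emph{any} choice of such leaf sets the resulting star edges are automatically pairwise disjoint, because each star edge is incident to one of the distinct centres $v_1, \ldots, v_s$ (hence to a unique star). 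We use this freedom to pick leaf sets leaving the residual graph $K_n \setminus E_S$ as balanced as possible on the non-centre vertices $\{v_{s+1}, \ldots, v_n\}$.

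For Phase 2, we embed the path-like trees $T_i$ ($i \in P := [r] \setminus S$) into $K_n \setminus E_S$ edge-disjointly. The key structural fact is that a tree with at most $\lambda n$ leaves has at most $\lambda n$ branching vertices (degree $\geq 3$), and therefore consists of a small branching skeleton connected by long bare paths that together contain nearly all of its vertices. For each path-like $T_i$ we first embed the skeleton (a tree on only $O(\lambda n)$ vertices) into the residual graph with enough care to respect the current degree profile, and then extend the bare paths through the almost-complete part of the residual graph by a standard path-linking argument. Throughout we enforce the invariant that every non-star-centre vertex has used degree at most $n/10$; since the total number of embedded edges is $\sum_{i=1}^{r}(n-i) \leq \eps n^2$, the average degree is only $O(\eps n) \ll n/10$, and any sufficiently balanced (e.g.\ randomised or carefully greedy) placement can maintain the bound per-vertex.

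The main obstacle is reconciling the two phases at the star centres. The vertices $v_1, \ldots, v_s$ become very low-degree in the residual graph (they retain only a few left-edges, at most $s - 1 \leq \eps n$), yet any path-like $T_i$ of size $n - i + 1$ with $i \leq s$ is forced to use at least $s - i + 1$ of them. Such a vertex can therefore only be mapped to a tree-vertex of correspondingly low degree --- typically a leaf of $T_i$, or a carefully selected degree-$2$ interior vertex. Since each path-like tree has at most $\lambda n$ leaves and $s \leq \eps n$ with $\eps \ll \lambda$, there is always numerical room for such an assignment, but coordinating it across the $|P|$ embeddings while respecting edge-disjointness is the most delicate part. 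I would handle it by reserving a small absorbing sub-structure inside each path-like tree that can swallow the star-centre vertices using only a handful of short edges, and then completing the bulk of each embedding through the nearly complete part of the residual graph on $\{v_{s+1}, \ldots, v_n\}$.
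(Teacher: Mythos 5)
Your Phase 1 is fine and matches the paper's setup (centre $v_j$ for the $j$-th largest star, leaves to the right, automatic edge-disjointness), and your global counting is consistent. The genuine gap is Phase 2, which is where essentially all of the difficulty of the theorem lives and where the plan offers only placeholders. After the stars are embedded, the centre $v_j$ of the $j$-th largest star $T_{i_j}$ has residual degree at most $i_j-1\leq r-1$, and its residual edges form a \emph{specific} set fixed in Phase 1: at most $j-1$ edges to centres on its left, plus $i_j-j$ edges to whichever rightward vertices you chose to spare when placing that star's leaves. A path-like tree $T_i$ with $i\le s$ is forced to use at least $s-i+1$ centres, and comparing the forced demand on $\{v_1,\dots,v_{j_0}\}$ with the available residual degree $\sum_{j\le j_0}(i_j-1)$ shows the budget has only constant-factor slack in general and \emph{zero} slack at the extremes: if $T_1$ is path-like and $T_2$ is a star, then $v_1$ retains exactly one edge, $T_1$ must use $v_1$, so $v_1$ must be a leaf of $T_1$ whose unique neighbour is embedded precisely at the one vertex you happened to spare. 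Consequently (i) Phase 1 cannot be done obliviously ``as balanced as possible'' --- which edges are spared at each centre must be coordinated with the later tree embeddings; (ii) the skeleton-plus-bare-paths picture fails at the centres, since a bare path passing through $v_j$ costs two residual edges there, which the budget cannot afford, so almost every visit to a centre must be a leaf whose parent lands on a designated residual neighbour, simultaneously for up to $\Theta(\eps n)$ centres per tree and edge-disjointly across all the path-like trees; and (iii) neither ``a standard path-linking argument'' nor an unspecified ``absorbing sub-structure'' is known to achieve this --- it is exactly the content of the paper's two key lemmas and its good-sequence induction, not a routine extension step.

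The paper avoids this impasse by reversing your phases: it embeds the path-like trees first, under the invariant that each future star centre acquires at most one rightward edge per path-like tree (with a ``turnaround vertex'' trick, pairing consecutive path-like trees, to absorb the places where two rightward edges are unavoidable), decomposing each path-like tree into ``ends'' and ``artificial ends'' that can be laid onto the centres with this one-rightward-edge property, and patching the vertices the ends miss using Hall-type and martingale arguments on the leaves. Only then are the stars embedded, greedily, using the rightward edges that the invariant guarantees remain --- a trivial final step, in contrast to your Phase 2. Without an argument of comparable strength for the stars-first order, the proposal is a plausible programme rather than a proof.
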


As our embedding of the `path-like' trees is at times quite involved, we now spend a little time setting up a visual way to think about these embeddings (see the figures on page~\pageref{fig:grid1} for two example embeddings). For ease of notation, let us also extend our sequence of trees so that there are equally many stars and `path-like' trees. That is, suppose we have $r=\eps n$ stars $S_1,\ldots,S_r$, and `path-like' trees $P_1,\ldots,P_r$ which each have at most $\lambda n$ leaves, for some $\eps\ll \lambda \ll 1$, where both the stars and the path-like trees are listed in decreasing order in size, and in some order these $2r$ trees form a sequence of trees decreasing stepwise in size from $n$ vertices to $n-2r+1$ vertices (we call this sequence the \emph{TPC sequence}).

Order the vertices of $K_n$ as $v_1,\ldots,v_n$, where we will use $v_i$ as the centre of the $i$th star for each $i\in [r]$. We will often think of this ordering as `left to right', with $v_1$ being the leftmost vertex and $v_n$ being the rightmost one. When embedding a path-like tree $P_i$ we will only use the intended centre of any star occurring in the TPC sequence after $P_i$.
We need to cover these vertices very carefully, so we put them into the set $$W_i=\{v_j:j\in [r]\text{ and }|P_i|>|S_j|\}$$ (these sets are depicted in a grid in Figure~\ref{fig:grid1}). Note that we have $W_1\supseteq W_2\supseteq \dots\supseteq W_r$. The vertices not intended as the centre of any star we cover with the embedding of each tree $P_i$, $i\in [r]$, and put these vertices into the set $X=\{v_{r+1},\ldots,v_n\}$.
 In summary, we will obey the following rule.

\stepcounter{propcounter}
\begin{enumerate}[label = \textbf{\Alph{propcounter}\arabic{enumi}}]
\item For each $i\in [r]$, $P_i$ is embedded into $K_n[W_i\cup X]$.\label{prop:rule1}
\end{enumerate}

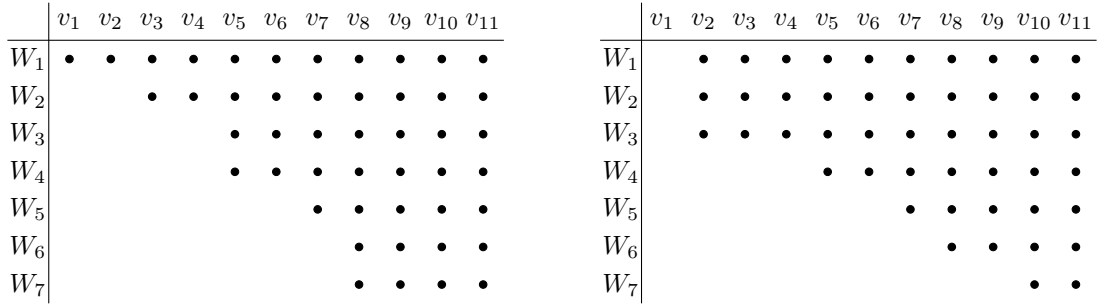
\begin{figure}[p!]
\centering
\gridpicvxtrue\gridpicboxfalse\gridpictreesfalse\gridpiconetrue\gridpictwofalse\begin{tikzpicture}

\def\wi{0.55cm}
\def\hgt{0.5cm}
\def\n{7}
\def\m{11}
\def\vx{0.05cm}

\foreach \x in {0,1,...,\n}
\foreach \y in {0,1,...,\m}
{
\coordinate (A\x\y) at ($\x*(0,-\hgt)+\y*(\wi,0)$);
}

\foreach \x in {1,...,\m}
{
\draw (A0\x) node {$v_{\x}$};
}
\foreach \x in {1,...,\n}
{
\draw (A\x0) node {$W_{\x}$};
}

\ifgridpicone
\ifgridpictrees
\foreach \x/\yy in {2/3,3/5,4/5,5/7,6/8,7/8}
{
\foreach \y in {\yy,...,\m}
{
\draw [fill=black!50,black!50] (A\x\y) circle [radius=\vx];
}
}
\fi
\foreach \x/\yy in {1/1,2/3,3/5,4/5,5/7,6/8,7/8}
{
\foreach \y in {\yy,...,\m}
{
\ifgridpicbox
\draw [fill=black!10,black!10] (A\x\y) circle [radius=\vx];
\fi
\ifgridpicvx
\draw [fill] (A\x\y) circle [radius=\vx];
\fi
}
}
\fi

\ifgridpictwo
\ifgridpictrees
\foreach \x/\yy in {2/2,3/2,4/5,5/7,6/8,7/10}
{
\foreach \y in {\yy,...,\m}
{
\draw [fill=black!50,black!50] (A\x\y) circle [radius=\vx];
}
}
\fi
\foreach \x/\yy in {1/2,2/2,3/2,4/5,5/7,6/8,7/10}
{
\foreach \y in {\yy,...,\m}
{
\ifgridpicbox
\draw [fill=black!10,black!10] (A\x\y) circle [radius=\vx];
\fi
\ifgridpicvx
\draw [fill] (A\x\y) circle [radius=\vx];
\fi
}
}
\fi

\ifgridpicone
\foreach \w/\x/\lab in {11/1\m/2,23/24/1,45/46/1}
{
\ifgridpicbox
\draw ($(A\w)+0.5*(-\wi,\hgt)$) -- ($(A\x)+0.5*(\wi,\hgt)$) -- ($(A\x)+0.5*(\wi,-\hgt)$) -- ($(A\w)+0.5*(-\wi,-\hgt)$) -- ($(A\w)+0.5*(-\wi,\hgt)$);
\draw ($0.5*(A\w)+0.5*(A\x)$) node {\lab};
\fi
\ifgridpictrees
\draw [black!10] ($(A\w)+0.5*(-\wi,\hgt)$) -- ($(A\x)+0.5*(\wi,\hgt)$) -- ($(A\x)+0.5*(\wi,-\hgt)$) -- ($(A\w)+0.5*(-\wi,-\hgt)$) -- ($(A\w)+0.5*(-\wi,\hgt)$);
\fi
}
\foreach \w/\x/\y/\z/\lab in {25/2\m/3\m/35/3,47/4\m/5\m/57/4,68/6\m/7\m/78/5}
{
\ifgridpicbox
\draw ($(A\w)+0.5*(-\wi,\hgt)$) -- ($(A\x)+0.5*(\wi,\hgt)$) -- ($(A\y)+0.5*(\wi,-\hgt)$) -- ($(A\z)+0.5*(-\wi,-\hgt)$) -- ($(A\w)+0.5*(-\wi,\hgt)$);
\draw ($0.25*(A\w)+0.25*(A\x)+0.25*(A\y)+0.25*(A\z)$) node {\lab};
\fi
\ifgridpictrees
\draw ($(A\w)+0.5*(-\wi,\hgt)$) -- ($(A\x)+0.5*(\wi,\hgt)$) -- ($(A\y)+0.5*(\wi,-\hgt)$) -- ($(A\z)+0.5*(-\wi,-\hgt)$) -- ($(A\w)+0.5*(-\wi,\hgt)$);
\draw ($0.25*(A\w)+0.25*(A\x)+0.25*(A\y)+0.25*(A\z)$) node {\lab};
\fi
}
\ifgridpictrees
\draw [thick] (A11)--(A12);
\draw [thick] (A12)to [out=40,in=140] (A14);
\draw [thick] (A14)to [out=40,in=140] (A16);
\draw [thick] (A13)to [out=40,in=140] (A15);
\draw [thick,->] (A15)-- ++(0.5*\wi,0);
\draw [thick] (A16)--(A111);
\draw [thick,->] (A111)-- ++(0.4*\wi,0);
\draw [thick] (A23)--(A24);
\draw [thick,->] (A24)-- ++(0.4*\wi,0);
\draw [thick] (A45)--(A46);
\draw [thick,->] (A46)-- ++(0.4*\wi,0);
\fi
\fi

\ifgridpictwo
\foreach \w/\x/\lab in {12/1\m/2,68/69/1,45/46/1}
{
\ifgridpicbox
\draw ($(A\w)+0.5*(-\wi,\hgt)$) -- ($(A\x)+0.5*(\wi,\hgt)$) -- ($(A\x)+0.5*(\wi,-\hgt)$) -- ($(A\w)+0.5*(-\wi,-\hgt)$) -- ($(A\w)+0.5*(-\wi,\hgt)$);
\draw ($0.5*(A\w)+0.5*(A\x)$) node {\lab};
\fi
\ifgridpictrees
\draw [black!10] ($(A\w)+0.5*(-\wi,\hgt)$) -- ($(A\x)+0.5*(\wi,\hgt)$) -- ($(A\x)+0.5*(\wi,-\hgt)$) -- ($(A\w)+0.5*(-\wi,-\hgt)$) -- ($(A\w)+0.5*(-\wi,\hgt)$);
\fi
}
\foreach \w/\x/\y/\z/\lab in {22/2\m/3\m/32/3,47/4\m/5\m/57/4,610/6\m/7\m/710/5}
{
\ifgridpicbox
\draw ($(A\w)+0.5*(-\wi,\hgt)$) -- ($(A\x)+0.5*(\wi,\hgt)$) -- ($(A\y)+0.5*(\wi,-\hgt)$) -- ($(A\z)+0.5*(-\wi,-\hgt)$) -- ($(A\w)+0.5*(-\wi,\hgt)$);
\draw ($0.25*(A\w)+0.25*(A\x)+0.25*(A\y)+0.25*(A\z)$) node {\lab};
\fi
\ifgridpictrees
\draw ($(A\w)+0.5*(-\wi,\hgt)$) -- ($(A\x)+0.5*(\wi,\hgt)$) -- ($(A\y)+0.5*(\wi,-\hgt)$) -- ($(A\z)+0.5*(-\wi,-\hgt)$) -- ($(A\w)+0.5*(-\wi,\hgt)$);
\draw ($0.25*(A\w)+0.25*(A\x)+0.25*(A\y)+0.25*(A\z)$) node {\lab};
\fi
}
\ifgridpictrees
\draw [thick] (A12)--(A14);
\draw [thick] (A14)to [out=40,in=140] (A16);
\draw [thick] (A16) -- (A17);
\draw [thick] (A17)to [out=40,in=140] (A19);
\draw [thick] (A15)to [out=25,in=155] (A18);
\draw [thick,->] (A18)-- ++(0.5*\wi,0);
\draw [thick] (A19)--(A111);
\draw [thick,->] (A111)-- ++(0.4*\wi,0);
\draw [thick] (A45)--(A46);
\draw [thick,->] (A46)-- ++(0.4*\wi,0);
\draw [thick] (A68)--(A69);
\draw [thick,->] (A69)-- ++(0.4*\wi,0);
\fi
\fi

\ifgridpicone
\ifgridpictrees
\foreach \y in {11,10,9,8,7,6,4,2,1}
{
\draw [fill=blue,blue] (A1\y) circle [radius=\vx];
}
\foreach \y in {5,3}
{
\draw [red,fill=red] (A1\y) circle [radius=\vx];
}
\fi
\fi
\ifgridpictwo
\ifgridpictrees
\foreach \y in {11,10,9,7,6,4,3,2}
{
\draw [fill=blue,blue] (A1\y) circle [radius=\vx];
}
\foreach \y in {5,8}
{
\draw [red,fill=red] (A1\y) circle [radius=\vx];
}
\fi
\fi

\draw ($(A00)-0.5*(\wi,\hgt)$) -- ($(A0\m)+0.5*(\wi,-\hgt)$);
\draw ($(A00)+0.5*(\wi,\hgt)$) -- ($(A\n0)+0.5*(\wi,-\hgt)$);

\end{tikzpicture}\hspace{1cm}\gridpictwotrue\gridpiconefalse\begin{tikzpicture}

\def\wi{0.55cm}
\def\hgt{0.5cm}
\def\n{7}
\def\m{11}
\def\vx{0.05cm}

\foreach \x in {0,1,...,\n}
\foreach \y in {0,1,...,\m}
{
\coordinate (A\x\y) at ($\x*(0,-\hgt)+\y*(\wi,0)$);
}

\foreach \x in {1,...,\m}
{
\draw (A0\x) node {$v_{\x}$};
}
\foreach \x in {1,...,\n}
{
\draw (A\x0) node {$W_{\x}$};
}

\ifgridpicone
\ifgridpictrees
\foreach \x/\yy in {2/3,3/5,4/5,5/7,6/8,7/8}
{
\foreach \y in {\yy,...,\m}
{
\draw [fill=black!50,black!50] (A\x\y) circle [radius=\vx];
}
}
\fi
\foreach \x/\yy in {1/1,2/3,3/5,4/5,5/7,6/8,7/8}
{
\foreach \y in {\yy,...,\m}
{
\ifgridpicbox
\draw [fill=black!10,black!10] (A\x\y) circle [radius=\vx];
\fi
\ifgridpicvx
\draw [fill] (A\x\y) circle [radius=\vx];
\fi
}
}
\fi

\ifgridpictwo
\ifgridpictrees
\foreach \x/\yy in {2/2,3/2,4/5,5/7,6/8,7/10}
{
\foreach \y in {\yy,...,\m}
{
\draw [fill=black!50,black!50] (A\x\y) circle [radius=\vx];
}
}
\fi
\foreach \x/\yy in {1/2,2/2,3/2,4/5,5/7,6/8,7/10}
{
\foreach \y in {\yy,...,\m}
{
\ifgridpicbox
\draw [fill=black!10,black!10] (A\x\y) circle [radius=\vx];
\fi
\ifgridpicvx
\draw [fill] (A\x\y) circle [radius=\vx];
\fi
}
}
\fi

\ifgridpicone
\foreach \w/\x/\lab in {11/1\m/2,23/24/1,45/46/1}
{
\ifgridpicbox
\draw ($(A\w)+0.5*(-\wi,\hgt)$) -- ($(A\x)+0.5*(\wi,\hgt)$) -- ($(A\x)+0.5*(\wi,-\hgt)$) -- ($(A\w)+0.5*(-\wi,-\hgt)$) -- ($(A\w)+0.5*(-\wi,\hgt)$);
\draw ($0.5*(A\w)+0.5*(A\x)$) node {\lab};
\fi
\ifgridpictrees
\draw [black!10] ($(A\w)+0.5*(-\wi,\hgt)$) -- ($(A\x)+0.5*(\wi,\hgt)$) -- ($(A\x)+0.5*(\wi,-\hgt)$) -- ($(A\w)+0.5*(-\wi,-\hgt)$) -- ($(A\w)+0.5*(-\wi,\hgt)$);
\fi
}
\foreach \w/\x/\y/\z/\lab in {25/2\m/3\m/35/3,47/4\m/5\m/57/4,68/6\m/7\m/78/5}
{
\ifgridpicbox
\draw ($(A\w)+0.5*(-\wi,\hgt)$) -- ($(A\x)+0.5*(\wi,\hgt)$) -- ($(A\y)+0.5*(\wi,-\hgt)$) -- ($(A\z)+0.5*(-\wi,-\hgt)$) -- ($(A\w)+0.5*(-\wi,\hgt)$);
\draw ($0.25*(A\w)+0.25*(A\x)+0.25*(A\y)+0.25*(A\z)$) node {\lab};
\fi
\ifgridpictrees
\draw ($(A\w)+0.5*(-\wi,\hgt)$) -- ($(A\x)+0.5*(\wi,\hgt)$) -- ($(A\y)+0.5*(\wi,-\hgt)$) -- ($(A\z)+0.5*(-\wi,-\hgt)$) -- ($(A\w)+0.5*(-\wi,\hgt)$);
\draw ($0.25*(A\w)+0.25*(A\x)+0.25*(A\y)+0.25*(A\z)$) node {\lab};
\fi
}
\ifgridpictrees
\draw [thick] (A11)--(A12);
\draw [thick] (A12)to [out=40,in=140] (A14);
\draw [thick] (A14)to [out=40,in=140] (A16);
\draw [thick] (A13)to [out=40,in=140] (A15);
\draw [thick,->] (A15)-- ++(0.5*\wi,0);
\draw [thick] (A16)--(A111);
\draw [thick,->] (A111)-- ++(0.4*\wi,0);
\draw [thick] (A23)--(A24);
\draw [thick,->] (A24)-- ++(0.4*\wi,0);
\draw [thick] (A45)--(A46);
\draw [thick,->] (A46)-- ++(0.4*\wi,0);
\fi
\fi

\ifgridpictwo
\foreach \w/\x/\lab in {12/1\m/2,68/69/1,45/46/1}
{
\ifgridpicbox
\draw ($(A\w)+0.5*(-\wi,\hgt)$) -- ($(A\x)+0.5*(\wi,\hgt)$) -- ($(A\x)+0.5*(\wi,-\hgt)$) -- ($(A\w)+0.5*(-\wi,-\hgt)$) -- ($(A\w)+0.5*(-\wi,\hgt)$);
\draw ($0.5*(A\w)+0.5*(A\x)$) node {\lab};
\fi
\ifgridpictrees
\draw [black!10] ($(A\w)+0.5*(-\wi,\hgt)$) -- ($(A\x)+0.5*(\wi,\hgt)$) -- ($(A\x)+0.5*(\wi,-\hgt)$) -- ($(A\w)+0.5*(-\wi,-\hgt)$) -- ($(A\w)+0.5*(-\wi,\hgt)$);
\fi
}
\foreach \w/\x/\y/\z/\lab in {22/2\m/3\m/32/3,47/4\m/5\m/57/4,610/6\m/7\m/710/5}
{
\ifgridpicbox
\draw ($(A\w)+0.5*(-\wi,\hgt)$) -- ($(A\x)+0.5*(\wi,\hgt)$) -- ($(A\y)+0.5*(\wi,-\hgt)$) -- ($(A\z)+0.5*(-\wi,-\hgt)$) -- ($(A\w)+0.5*(-\wi,\hgt)$);
\draw ($0.25*(A\w)+0.25*(A\x)+0.25*(A\y)+0.25*(A\z)$) node {\lab};
\fi
\ifgridpictrees
\draw ($(A\w)+0.5*(-\wi,\hgt)$) -- ($(A\x)+0.5*(\wi,\hgt)$) -- ($(A\y)+0.5*(\wi,-\hgt)$) -- ($(A\z)+0.5*(-\wi,-\hgt)$) -- ($(A\w)+0.5*(-\wi,\hgt)$);
\draw ($0.25*(A\w)+0.25*(A\x)+0.25*(A\y)+0.25*(A\z)$) node {\lab};
\fi
}
\ifgridpictrees
\draw [thick] (A12)--(A14);
\draw [thick] (A14)to [out=40,in=140] (A16);
\draw [thick] (A16) -- (A17);
\draw [thick] (A17)to [out=40,in=140] (A19);
\draw [thick] (A15)to [out=25,in=155] (A18);
\draw [thick,->] (A18)-- ++(0.5*\wi,0);
\draw [thick] (A19)--(A111);
\draw [thick,->] (A111)-- ++(0.4*\wi,0);
\draw [thick] (A45)--(A46);
\draw [thick,->] (A46)-- ++(0.4*\wi,0);
\draw [thick] (A68)--(A69);
\draw [thick,->] (A69)-- ++(0.4*\wi,0);
\fi
\fi

\ifgridpicone
\ifgridpictrees
\foreach \y in {11,10,9,8,7,6,4,2,1}
{
\draw [fill=blue,blue] (A1\y) circle [radius=\vx];
}
\foreach \y in {5,3}
{
\draw [red,fill=red] (A1\y) circle [radius=\vx];
}
\fi
\fi
\ifgridpictwo
\ifgridpictrees
\foreach \y in {11,10,9,7,6,4,3,2}
{
\draw [fill=blue,blue] (A1\y) circle [radius=\vx];
}
\foreach \y in {5,8}
{
\draw [red,fill=red] (A1\y) circle [radius=\vx];
}
\fi
\fi

\draw ($(A00)-0.5*(\wi,\hgt)$) -- ($(A0\m)+0.5*(\wi,-\hgt)$);
\draw ($(A00)+0.5*(\wi,\hgt)$) -- ($(A\n0)+0.5*(\wi,-\hgt)$);

\end{tikzpicture}
\caption{Grids of vertices in $W_i$, $i\in [7]$, corresponding to the sequences\newline
\centering $P_1,S_1,S_2,P_2,S_3,S_4,P_3,P_4,S_5,S_6,P_5,S_7,P_6,P_7,S_8,S_9,S_{10},S_{11}$ (left) and\newline
\centering $S_1,P_1,P_2,P_3,S_2,S_3,S_4,P_4,S_5,S_6,P_5,S_7,P_6,S_8,S_9,P_7,S_{10},S_{11}$ (right).}
\label{fig:grid1}
\end{figure}

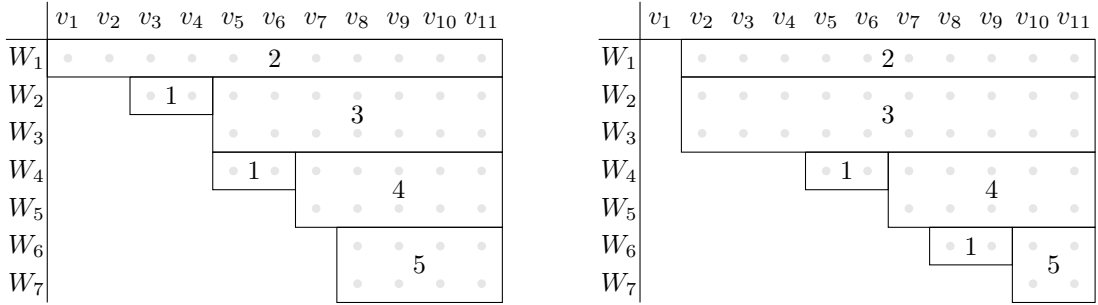
\begin{figure}[p!]
\gridpicvxfalse\gridpicboxtrue\gridpictreesfalse\gridpiconetrue\gridpictwofalse\begin{tikzpicture}

\def\wi{0.55cm}
\def\hgt{0.5cm}
\def\n{7}
\def\m{11}
\def\vx{0.05cm}

\foreach \x in {0,1,...,\n}
\foreach \y in {0,1,...,\m}
{
\coordinate (A\x\y) at ($\x*(0,-\hgt)+\y*(\wi,0)$);
}

\foreach \x in {1,...,\m}
{
\draw (A0\x) node {$v_{\x}$};
}
\foreach \x in {1,...,\n}
{
\draw (A\x0) node {$W_{\x}$};
}

\ifgridpicone
\ifgridpictrees
\foreach \x/\yy in {2/3,3/5,4/5,5/7,6/8,7/8}
{
\foreach \y in {\yy,...,\m}
{
\draw [fill=black!50,black!50] (A\x\y) circle [radius=\vx];
}
}
\fi
\foreach \x/\yy in {1/1,2/3,3/5,4/5,5/7,6/8,7/8}
{
\foreach \y in {\yy,...,\m}
{
\ifgridpicbox
\draw [fill=black!10,black!10] (A\x\y) circle [radius=\vx];
\fi
\ifgridpicvx
\draw [fill] (A\x\y) circle [radius=\vx];
\fi
}
}
\fi

\ifgridpictwo
\ifgridpictrees
\foreach \x/\yy in {2/2,3/2,4/5,5/7,6/8,7/10}
{
\foreach \y in {\yy,...,\m}
{
\draw [fill=black!50,black!50] (A\x\y) circle [radius=\vx];
}
}
\fi
\foreach \x/\yy in {1/2,2/2,3/2,4/5,5/7,6/8,7/10}
{
\foreach \y in {\yy,...,\m}
{
\ifgridpicbox
\draw [fill=black!10,black!10] (A\x\y) circle [radius=\vx];
\fi
\ifgridpicvx
\draw [fill] (A\x\y) circle [radius=\vx];
\fi
}
}
\fi

\ifgridpicone
\foreach \w/\x/\lab in {11/1\m/2,23/24/1,45/46/1}
{
\ifgridpicbox
\draw ($(A\w)+0.5*(-\wi,\hgt)$) -- ($(A\x)+0.5*(\wi,\hgt)$) -- ($(A\x)+0.5*(\wi,-\hgt)$) -- ($(A\w)+0.5*(-\wi,-\hgt)$) -- ($(A\w)+0.5*(-\wi,\hgt)$);
\draw ($0.5*(A\w)+0.5*(A\x)$) node {\lab};
\fi
\ifgridpictrees
\draw [black!10] ($(A\w)+0.5*(-\wi,\hgt)$) -- ($(A\x)+0.5*(\wi,\hgt)$) -- ($(A\x)+0.5*(\wi,-\hgt)$) -- ($(A\w)+0.5*(-\wi,-\hgt)$) -- ($(A\w)+0.5*(-\wi,\hgt)$);
\fi
}
\foreach \w/\x/\y/\z/\lab in {25/2\m/3\m/35/3,47/4\m/5\m/57/4,68/6\m/7\m/78/5}
{
\ifgridpicbox
\draw ($(A\w)+0.5*(-\wi,\hgt)$) -- ($(A\x)+0.5*(\wi,\hgt)$) -- ($(A\y)+0.5*(\wi,-\hgt)$) -- ($(A\z)+0.5*(-\wi,-\hgt)$) -- ($(A\w)+0.5*(-\wi,\hgt)$);
\draw ($0.25*(A\w)+0.25*(A\x)+0.25*(A\y)+0.25*(A\z)$) node {\lab};
\fi
\ifgridpictrees
\draw ($(A\w)+0.5*(-\wi,\hgt)$) -- ($(A\x)+0.5*(\wi,\hgt)$) -- ($(A\y)+0.5*(\wi,-\hgt)$) -- ($(A\z)+0.5*(-\wi,-\hgt)$) -- ($(A\w)+0.5*(-\wi,\hgt)$);
\draw ($0.25*(A\w)+0.25*(A\x)+0.25*(A\y)+0.25*(A\z)$) node {\lab};
\fi
}
\ifgridpictrees
\draw [thick] (A11)--(A12);
\draw [thick] (A12)to [out=40,in=140] (A14);
\draw [thick] (A14)to [out=40,in=140] (A16);
\draw [thick] (A13)to [out=40,in=140] (A15);
\draw [thick,->] (A15)-- ++(0.5*\wi,0);
\draw [thick] (A16)--(A111);
\draw [thick,->] (A111)-- ++(0.4*\wi,0);
\draw [thick] (A23)--(A24);
\draw [thick,->] (A24)-- ++(0.4*\wi,0);
\draw [thick] (A45)--(A46);
\draw [thick,->] (A46)-- ++(0.4*\wi,0);
\fi
\fi

\ifgridpictwo
\foreach \w/\x/\lab in {12/1\m/2,68/69/1,45/46/1}
{
\ifgridpicbox
\draw ($(A\w)+0.5*(-\wi,\hgt)$) -- ($(A\x)+0.5*(\wi,\hgt)$) -- ($(A\x)+0.5*(\wi,-\hgt)$) -- ($(A\w)+0.5*(-\wi,-\hgt)$) -- ($(A\w)+0.5*(-\wi,\hgt)$);
\draw ($0.5*(A\w)+0.5*(A\x)$) node {\lab};
\fi
\ifgridpictrees
\draw [black!10] ($(A\w)+0.5*(-\wi,\hgt)$) -- ($(A\x)+0.5*(\wi,\hgt)$) -- ($(A\x)+0.5*(\wi,-\hgt)$) -- ($(A\w)+0.5*(-\wi,-\hgt)$) -- ($(A\w)+0.5*(-\wi,\hgt)$);
\fi
}
\foreach \w/\x/\y/\z/\lab in {22/2\m/3\m/32/3,47/4\m/5\m/57/4,610/6\m/7\m/710/5}
{
\ifgridpicbox
\draw ($(A\w)+0.5*(-\wi,\hgt)$) -- ($(A\x)+0.5*(\wi,\hgt)$) -- ($(A\y)+0.5*(\wi,-\hgt)$) -- ($(A\z)+0.5*(-\wi,-\hgt)$) -- ($(A\w)+0.5*(-\wi,\hgt)$);
\draw ($0.25*(A\w)+0.25*(A\x)+0.25*(A\y)+0.25*(A\z)$) node {\lab};
\fi
\ifgridpictrees
\draw ($(A\w)+0.5*(-\wi,\hgt)$) -- ($(A\x)+0.5*(\wi,\hgt)$) -- ($(A\y)+0.5*(\wi,-\hgt)$) -- ($(A\z)+0.5*(-\wi,-\hgt)$) -- ($(A\w)+0.5*(-\wi,\hgt)$);
\draw ($0.25*(A\w)+0.25*(A\x)+0.25*(A\y)+0.25*(A\z)$) node {\lab};
\fi
}
\ifgridpictrees
\draw [thick] (A12)--(A14);
\draw [thick] (A14)to [out=40,in=140] (A16);
\draw [thick] (A16) -- (A17);
\draw [thick] (A17)to [out=40,in=140] (A19);
\draw [thick] (A15)to [out=25,in=155] (A18);
\draw [thick,->] (A18)-- ++(0.5*\wi,0);
\draw [thick] (A19)--(A111);
\draw [thick,->] (A111)-- ++(0.4*\wi,0);
\draw [thick] (A45)--(A46);
\draw [thick,->] (A46)-- ++(0.4*\wi,0);
\draw [thick] (A68)--(A69);
\draw [thick,->] (A69)-- ++(0.4*\wi,0);
\fi
\fi

\ifgridpicone
\ifgridpictrees
\foreach \y in {11,10,9,8,7,6,4,2,1}
{
\draw [fill=blue,blue] (A1\y) circle [radius=\vx];
}
\foreach \y in {5,3}
{
\draw [red,fill=red] (A1\y) circle [radius=\vx];
}
\fi
\fi
\ifgridpictwo
\ifgridpictrees
\foreach \y in {11,10,9,7,6,4,3,2}
{
\draw [fill=blue,blue] (A1\y) circle [radius=\vx];
}
\foreach \y in {5,8}
{
\draw [red,fill=red] (A1\y) circle [radius=\vx];
}
\fi
\fi

\draw ($(A00)-0.5*(\wi,\hgt)$) -- ($(A0\m)+0.5*(\wi,-\hgt)$);
\draw ($(A00)+0.5*(\wi,\hgt)$) -- ($(A\n0)+0.5*(\wi,-\hgt)$);

\end{tikzpicture}\hspace{1cm}\gridpictwotrue\gridpiconefalse\begin{tikzpicture}

\def\wi{0.55cm}
\def\hgt{0.5cm}
\def\n{7}
\def\m{11}
\def\vx{0.05cm}

\foreach \x in {0,1,...,\n}
\foreach \y in {0,1,...,\m}
{
\coordinate (A\x\y) at ($\x*(0,-\hgt)+\y*(\wi,0)$);
}

\foreach \x in {1,...,\m}
{
\draw (A0\x) node {$v_{\x}$};
}
\foreach \x in {1,...,\n}
{
\draw (A\x0) node {$W_{\x}$};
}

\ifgridpicone
\ifgridpictrees
\foreach \x/\yy in {2/3,3/5,4/5,5/7,6/8,7/8}
{
\foreach \y in {\yy,...,\m}
{
\draw [fill=black!50,black!50] (A\x\y) circle [radius=\vx];
}
}
\fi
\foreach \x/\yy in {1/1,2/3,3/5,4/5,5/7,6/8,7/8}
{
\foreach \y in {\yy,...,\m}
{
\ifgridpicbox
\draw [fill=black!10,black!10] (A\x\y) circle [radius=\vx];
\fi
\ifgridpicvx
\draw [fill] (A\x\y) circle [radius=\vx];
\fi
}
}
\fi

\ifgridpictwo
\ifgridpictrees
\foreach \x/\yy in {2/2,3/2,4/5,5/7,6/8,7/10}
{
\foreach \y in {\yy,...,\m}
{
\draw [fill=black!50,black!50] (A\x\y) circle [radius=\vx];
}
}
\fi
\foreach \x/\yy in {1/2,2/2,3/2,4/5,5/7,6/8,7/10}
{
\foreach \y in {\yy,...,\m}
{
\ifgridpicbox
\draw [fill=black!10,black!10] (A\x\y) circle [radius=\vx];
\fi
\ifgridpicvx
\draw [fill] (A\x\y) circle [radius=\vx];
\fi
}
}
\fi

\ifgridpicone
\foreach \w/\x/\lab in {11/1\m/2,23/24/1,45/46/1}
{
\ifgridpicbox
\draw ($(A\w)+0.5*(-\wi,\hgt)$) -- ($(A\x)+0.5*(\wi,\hgt)$) -- ($(A\x)+0.5*(\wi,-\hgt)$) -- ($(A\w)+0.5*(-\wi,-\hgt)$) -- ($(A\w)+0.5*(-\wi,\hgt)$);
\draw ($0.5*(A\w)+0.5*(A\x)$) node {\lab};
\fi
\ifgridpictrees
\draw [black!10] ($(A\w)+0.5*(-\wi,\hgt)$) -- ($(A\x)+0.5*(\wi,\hgt)$) -- ($(A\x)+0.5*(\wi,-\hgt)$) -- ($(A\w)+0.5*(-\wi,-\hgt)$) -- ($(A\w)+0.5*(-\wi,\hgt)$);
\fi
}
\foreach \w/\x/\y/\z/\lab in {25/2\m/3\m/35/3,47/4\m/5\m/57/4,68/6\m/7\m/78/5}
{
\ifgridpicbox
\draw ($(A\w)+0.5*(-\wi,\hgt)$) -- ($(A\x)+0.5*(\wi,\hgt)$) -- ($(A\y)+0.5*(\wi,-\hgt)$) -- ($(A\z)+0.5*(-\wi,-\hgt)$) -- ($(A\w)+0.5*(-\wi,\hgt)$);
\draw ($0.25*(A\w)+0.25*(A\x)+0.25*(A\y)+0.25*(A\z)$) node {\lab};
\fi
\ifgridpictrees
\draw ($(A\w)+0.5*(-\wi,\hgt)$) -- ($(A\x)+0.5*(\wi,\hgt)$) -- ($(A\y)+0.5*(\wi,-\hgt)$) -- ($(A\z)+0.5*(-\wi,-\hgt)$) -- ($(A\w)+0.5*(-\wi,\hgt)$);
\draw ($0.25*(A\w)+0.25*(A\x)+0.25*(A\y)+0.25*(A\z)$) node {\lab};
\fi
}
\ifgridpictrees
\draw [thick] (A11)--(A12);
\draw [thick] (A12)to [out=40,in=140] (A14);
\draw [thick] (A14)to [out=40,in=140] (A16);
\draw [thick] (A13)to [out=40,in=140] (A15);
\draw [thick,->] (A15)-- ++(0.5*\wi,0);
\draw [thick] (A16)--(A111);
\draw [thick,->] (A111)-- ++(0.4*\wi,0);
\draw [thick] (A23)--(A24);
\draw [thick,->] (A24)-- ++(0.4*\wi,0);
\draw [thick] (A45)--(A46);
\draw [thick,->] (A46)-- ++(0.4*\wi,0);
\fi
\fi

\ifgridpictwo
\foreach \w/\x/\lab in {12/1\m/2,68/69/1,45/46/1}
{
\ifgridpicbox
\draw ($(A\w)+0.5*(-\wi,\hgt)$) -- ($(A\x)+0.5*(\wi,\hgt)$) -- ($(A\x)+0.5*(\wi,-\hgt)$) -- ($(A\w)+0.5*(-\wi,-\hgt)$) -- ($(A\w)+0.5*(-\wi,\hgt)$);
\draw ($0.5*(A\w)+0.5*(A\x)$) node {\lab};
\fi
\ifgridpictrees
\draw [black!10] ($(A\w)+0.5*(-\wi,\hgt)$) -- ($(A\x)+0.5*(\wi,\hgt)$) -- ($(A\x)+0.5*(\wi,-\hgt)$) -- ($(A\w)+0.5*(-\wi,-\hgt)$) -- ($(A\w)+0.5*(-\wi,\hgt)$);
\fi
}
\foreach \w/\x/\y/\z/\lab in {22/2\m/3\m/32/3,47/4\m/5\m/57/4,610/6\m/7\m/710/5}
{
\ifgridpicbox
\draw ($(A\w)+0.5*(-\wi,\hgt)$) -- ($(A\x)+0.5*(\wi,\hgt)$) -- ($(A\y)+0.5*(\wi,-\hgt)$) -- ($(A\z)+0.5*(-\wi,-\hgt)$) -- ($(A\w)+0.5*(-\wi,\hgt)$);
\draw ($0.25*(A\w)+0.25*(A\x)+0.25*(A\y)+0.25*(A\z)$) node {\lab};
\fi
\ifgridpictrees
\draw ($(A\w)+0.5*(-\wi,\hgt)$) -- ($(A\x)+0.5*(\wi,\hgt)$) -- ($(A\y)+0.5*(\wi,-\hgt)$) -- ($(A\z)+0.5*(-\wi,-\hgt)$) -- ($(A\w)+0.5*(-\wi,\hgt)$);
\draw ($0.25*(A\w)+0.25*(A\x)+0.25*(A\y)+0.25*(A\z)$) node {\lab};
\fi
}
\ifgridpictrees
\draw [thick] (A12)--(A14);
\draw [thick] (A14)to [out=40,in=140] (A16);
\draw [thick] (A16) -- (A17);
\draw [thick] (A17)to [out=40,in=140] (A19);
\draw [thick] (A15)to [out=25,in=155] (A18);
\draw [thick,->] (A18)-- ++(0.5*\wi,0);
\draw [thick] (A19)--(A111);
\draw [thick,->] (A111)-- ++(0.4*\wi,0);
\draw [thick] (A45)--(A46);
\draw [thick,->] (A46)-- ++(0.4*\wi,0);
\draw [thick] (A68)--(A69);
\draw [thick,->] (A69)-- ++(0.4*\wi,0);
\fi
\fi

\ifgridpicone
\ifgridpictrees
\foreach \y in {11,10,9,8,7,6,4,2,1}
{
\draw [fill=blue,blue] (A1\y) circle [radius=\vx];
}
\foreach \y in {5,3}
{
\draw [red,fill=red] (A1\y) circle [radius=\vx];
}
\fi
\fi
\ifgridpictwo
\ifgridpictrees
\foreach \y in {11,10,9,7,6,4,3,2}
{
\draw [fill=blue,blue] (A1\y) circle [radius=\vx];
}
\foreach \y in {5,8}
{
\draw [red,fill=red] (A1\y) circle [radius=\vx];
}
\fi
\fi

\draw ($(A00)-0.5*(\wi,\hgt)$) -- ($(A0\m)+0.5*(\wi,-\hgt)$);
\draw ($(A00)+0.5*(\wi,\hgt)$) -- ($(A\n0)+0.5*(\wi,-\hgt)$);

\end{tikzpicture}
\caption{The order of the vertices (mostly) covered by the embedding. Firstly, part of $P_i$ is embedded in $W_i\setminus W_{i+1}$, for each even $i$ (\ref{stepA}). Then, $P_1$ is embedded covering $W_1$ (\ref{stepB}). Then, for each even $i$, $P_i$ and $P_{i+1}$ are embedded together covering most of $W_{i+1}$ (\ref{stepC}).}
\label{fig:grid2}
\end{figure}

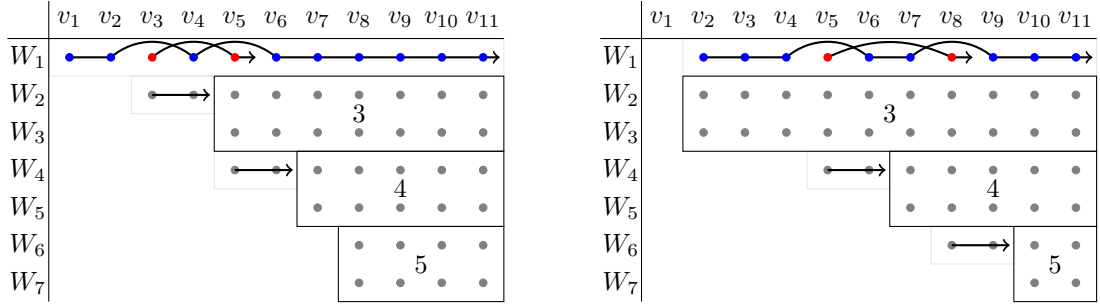
\begin{figure}[p!]
\centering
\gridpicvxfalse\gridpicboxfalse\gridpictreestrue\gridpiconetrue\gridpictwofalse\begin{tikzpicture}

\def\wi{0.55cm}
\def\hgt{0.5cm}
\def\n{7}
\def\m{11}
\def\vx{0.05cm}

\foreach \x in {0,1,...,\n}
\foreach \y in {0,1,...,\m}
{
\coordinate (A\x\y) at ($\x*(0,-\hgt)+\y*(\wi,0)$);
}

\foreach \x in {1,...,\m}
{
\draw (A0\x) node {$v_{\x}$};
}
\foreach \x in {1,...,\n}
{
\draw (A\x0) node {$W_{\x}$};
}

\ifgridpicone
\ifgridpictrees
\foreach \x/\yy in {2/3,3/5,4/5,5/7,6/8,7/8}
{
\foreach \y in {\yy,...,\m}
{
\draw [fill=black!50,black!50] (A\x\y) circle [radius=\vx];
}
}
\fi
\foreach \x/\yy in {1/1,2/3,3/5,4/5,5/7,6/8,7/8}
{
\foreach \y in {\yy,...,\m}
{
\ifgridpicbox
\draw [fill=black!10,black!10] (A\x\y) circle [radius=\vx];
\fi
\ifgridpicvx
\draw [fill] (A\x\y) circle [radius=\vx];
\fi
}
}
\fi

\ifgridpictwo
\ifgridpictrees
\foreach \x/\yy in {2/2,3/2,4/5,5/7,6/8,7/10}
{
\foreach \y in {\yy,...,\m}
{
\draw [fill=black!50,black!50] (A\x\y) circle [radius=\vx];
}
}
\fi
\foreach \x/\yy in {1/2,2/2,3/2,4/5,5/7,6/8,7/10}
{
\foreach \y in {\yy,...,\m}
{
\ifgridpicbox
\draw [fill=black!10,black!10] (A\x\y) circle [radius=\vx];
\fi
\ifgridpicvx
\draw [fill] (A\x\y) circle [radius=\vx];
\fi
}
}
\fi

\ifgridpicone
\foreach \w/\x/\lab in {11/1\m/2,23/24/1,45/46/1}
{
\ifgridpicbox
\draw ($(A\w)+0.5*(-\wi,\hgt)$) -- ($(A\x)+0.5*(\wi,\hgt)$) -- ($(A\x)+0.5*(\wi,-\hgt)$) -- ($(A\w)+0.5*(-\wi,-\hgt)$) -- ($(A\w)+0.5*(-\wi,\hgt)$);
\draw ($0.5*(A\w)+0.5*(A\x)$) node {\lab};
\fi
\ifgridpictrees
\draw [black!10] ($(A\w)+0.5*(-\wi,\hgt)$) -- ($(A\x)+0.5*(\wi,\hgt)$) -- ($(A\x)+0.5*(\wi,-\hgt)$) -- ($(A\w)+0.5*(-\wi,-\hgt)$) -- ($(A\w)+0.5*(-\wi,\hgt)$);
\fi
}
\foreach \w/\x/\y/\z/\lab in {25/2\m/3\m/35/3,47/4\m/5\m/57/4,68/6\m/7\m/78/5}
{
\ifgridpicbox
\draw ($(A\w)+0.5*(-\wi,\hgt)$) -- ($(A\x)+0.5*(\wi,\hgt)$) -- ($(A\y)+0.5*(\wi,-\hgt)$) -- ($(A\z)+0.5*(-\wi,-\hgt)$) -- ($(A\w)+0.5*(-\wi,\hgt)$);
\draw ($0.25*(A\w)+0.25*(A\x)+0.25*(A\y)+0.25*(A\z)$) node {\lab};
\fi
\ifgridpictrees
\draw ($(A\w)+0.5*(-\wi,\hgt)$) -- ($(A\x)+0.5*(\wi,\hgt)$) -- ($(A\y)+0.5*(\wi,-\hgt)$) -- ($(A\z)+0.5*(-\wi,-\hgt)$) -- ($(A\w)+0.5*(-\wi,\hgt)$);
\draw ($0.25*(A\w)+0.25*(A\x)+0.25*(A\y)+0.25*(A\z)$) node {\lab};
\fi
}
\ifgridpictrees
\draw [thick] (A11)--(A12);
\draw [thick] (A12)to [out=40,in=140] (A14);
\draw [thick] (A14)to [out=40,in=140] (A16);
\draw [thick] (A13)to [out=40,in=140] (A15);
\draw [thick,->] (A15)-- ++(0.5*\wi,0);
\draw [thick] (A16)--(A111);
\draw [thick,->] (A111)-- ++(0.4*\wi,0);
\draw [thick] (A23)--(A24);
\draw [thick,->] (A24)-- ++(0.4*\wi,0);
\draw [thick] (A45)--(A46);
\draw [thick,->] (A46)-- ++(0.4*\wi,0);
\fi
\fi

\ifgridpictwo
\foreach \w/\x/\lab in {12/1\m/2,68/69/1,45/46/1}
{
\ifgridpicbox
\draw ($(A\w)+0.5*(-\wi,\hgt)$) -- ($(A\x)+0.5*(\wi,\hgt)$) -- ($(A\x)+0.5*(\wi,-\hgt)$) -- ($(A\w)+0.5*(-\wi,-\hgt)$) -- ($(A\w)+0.5*(-\wi,\hgt)$);
\draw ($0.5*(A\w)+0.5*(A\x)$) node {\lab};
\fi
\ifgridpictrees
\draw [black!10] ($(A\w)+0.5*(-\wi,\hgt)$) -- ($(A\x)+0.5*(\wi,\hgt)$) -- ($(A\x)+0.5*(\wi,-\hgt)$) -- ($(A\w)+0.5*(-\wi,-\hgt)$) -- ($(A\w)+0.5*(-\wi,\hgt)$);
\fi
}
\foreach \w/\x/\y/\z/\lab in {22/2\m/3\m/32/3,47/4\m/5\m/57/4,610/6\m/7\m/710/5}
{
\ifgridpicbox
\draw ($(A\w)+0.5*(-\wi,\hgt)$) -- ($(A\x)+0.5*(\wi,\hgt)$) -- ($(A\y)+0.5*(\wi,-\hgt)$) -- ($(A\z)+0.5*(-\wi,-\hgt)$) -- ($(A\w)+0.5*(-\wi,\hgt)$);
\draw ($0.25*(A\w)+0.25*(A\x)+0.25*(A\y)+0.25*(A\z)$) node {\lab};
\fi
\ifgridpictrees
\draw ($(A\w)+0.5*(-\wi,\hgt)$) -- ($(A\x)+0.5*(\wi,\hgt)$) -- ($(A\y)+0.5*(\wi,-\hgt)$) -- ($(A\z)+0.5*(-\wi,-\hgt)$) -- ($(A\w)+0.5*(-\wi,\hgt)$);
\draw ($0.25*(A\w)+0.25*(A\x)+0.25*(A\y)+0.25*(A\z)$) node {\lab};
\fi
}
\ifgridpictrees
\draw [thick] (A12)--(A14);
\draw [thick] (A14)to [out=40,in=140] (A16);
\draw [thick] (A16) -- (A17);
\draw [thick] (A17)to [out=40,in=140] (A19);
\draw [thick] (A15)to [out=25,in=155] (A18);
\draw [thick,->] (A18)-- ++(0.5*\wi,0);
\draw [thick] (A19)--(A111);
\draw [thick,->] (A111)-- ++(0.4*\wi,0);
\draw [thick] (A45)--(A46);
\draw [thick,->] (A46)-- ++(0.4*\wi,0);
\draw [thick] (A68)--(A69);
\draw [thick,->] (A69)-- ++(0.4*\wi,0);
\fi
\fi

\ifgridpicone
\ifgridpictrees
\foreach \y in {11,10,9,8,7,6,4,2,1}
{
\draw [fill=blue,blue] (A1\y) circle [radius=\vx];
}
\foreach \y in {5,3}
{
\draw [red,fill=red] (A1\y) circle [radius=\vx];
}
\fi
\fi
\ifgridpictwo
\ifgridpictrees
\foreach \y in {11,10,9,7,6,4,3,2}
{
\draw [fill=blue,blue] (A1\y) circle [radius=\vx];
}
\foreach \y in {5,8}
{
\draw [red,fill=red] (A1\y) circle [radius=\vx];
}
\fi
\fi

\draw ($(A00)-0.5*(\wi,\hgt)$) -- ($(A0\m)+0.5*(\wi,-\hgt)$);
\draw ($(A00)+0.5*(\wi,\hgt)$) -- ($(A\n0)+0.5*(\wi,-\hgt)$);

\end{tikzpicture}\hspace{1cm}\gridpictwotrue\gridpiconefalse\begin{tikzpicture}

\def\wi{0.55cm}
\def\hgt{0.5cm}
\def\n{7}
\def\m{11}
\def\vx{0.05cm}

\foreach \x in {0,1,...,\n}
\foreach \y in {0,1,...,\m}
{
\coordinate (A\x\y) at ($\x*(0,-\hgt)+\y*(\wi,0)$);
}

\foreach \x in {1,...,\m}
{
\draw (A0\x) node {$v_{\x}$};
}
\foreach \x in {1,...,\n}
{
\draw (A\x0) node {$W_{\x}$};
}

\ifgridpicone
\ifgridpictrees
\foreach \x/\yy in {2/3,3/5,4/5,5/7,6/8,7/8}
{
\foreach \y in {\yy,...,\m}
{
\draw [fill=black!50,black!50] (A\x\y) circle [radius=\vx];
}
}
\fi
\foreach \x/\yy in {1/1,2/3,3/5,4/5,5/7,6/8,7/8}
{
\foreach \y in {\yy,...,\m}
{
\ifgridpicbox
\draw [fill=black!10,black!10] (A\x\y) circle [radius=\vx];
\fi
\ifgridpicvx
\draw [fill] (A\x\y) circle [radius=\vx];
\fi
}
}
\fi

\ifgridpictwo
\ifgridpictrees
\foreach \x/\yy in {2/2,3/2,4/5,5/7,6/8,7/10}
{
\foreach \y in {\yy,...,\m}
{
\draw [fill=black!50,black!50] (A\x\y) circle [radius=\vx];
}
}
\fi
\foreach \x/\yy in {1/2,2/2,3/2,4/5,5/7,6/8,7/10}
{
\foreach \y in {\yy,...,\m}
{
\ifgridpicbox
\draw [fill=black!10,black!10] (A\x\y) circle [radius=\vx];
\fi
\ifgridpicvx
\draw [fill] (A\x\y) circle [radius=\vx];
\fi
}
}
\fi

\ifgridpicone
\foreach \w/\x/\lab in {11/1\m/2,23/24/1,45/46/1}
{
\ifgridpicbox
\draw ($(A\w)+0.5*(-\wi,\hgt)$) -- ($(A\x)+0.5*(\wi,\hgt)$) -- ($(A\x)+0.5*(\wi,-\hgt)$) -- ($(A\w)+0.5*(-\wi,-\hgt)$) -- ($(A\w)+0.5*(-\wi,\hgt)$);
\draw ($0.5*(A\w)+0.5*(A\x)$) node {\lab};
\fi
\ifgridpictrees
\draw [black!10] ($(A\w)+0.5*(-\wi,\hgt)$) -- ($(A\x)+0.5*(\wi,\hgt)$) -- ($(A\x)+0.5*(\wi,-\hgt)$) -- ($(A\w)+0.5*(-\wi,-\hgt)$) -- ($(A\w)+0.5*(-\wi,\hgt)$);
\fi
}
\foreach \w/\x/\y/\z/\lab in {25/2\m/3\m/35/3,47/4\m/5\m/57/4,68/6\m/7\m/78/5}
{
\ifgridpicbox
\draw ($(A\w)+0.5*(-\wi,\hgt)$) -- ($(A\x)+0.5*(\wi,\hgt)$) -- ($(A\y)+0.5*(\wi,-\hgt)$) -- ($(A\z)+0.5*(-\wi,-\hgt)$) -- ($(A\w)+0.5*(-\wi,\hgt)$);
\draw ($0.25*(A\w)+0.25*(A\x)+0.25*(A\y)+0.25*(A\z)$) node {\lab};
\fi
\ifgridpictrees
\draw ($(A\w)+0.5*(-\wi,\hgt)$) -- ($(A\x)+0.5*(\wi,\hgt)$) -- ($(A\y)+0.5*(\wi,-\hgt)$) -- ($(A\z)+0.5*(-\wi,-\hgt)$) -- ($(A\w)+0.5*(-\wi,\hgt)$);
\draw ($0.25*(A\w)+0.25*(A\x)+0.25*(A\y)+0.25*(A\z)$) node {\lab};
\fi
}
\ifgridpictrees
\draw [thick] (A11)--(A12);
\draw [thick] (A12)to [out=40,in=140] (A14);
\draw [thick] (A14)to [out=40,in=140] (A16);
\draw [thick] (A13)to [out=40,in=140] (A15);
\draw [thick,->] (A15)-- ++(0.5*\wi,0);
\draw [thick] (A16)--(A111);
\draw [thick,->] (A111)-- ++(0.4*\wi,0);
\draw [thick] (A23)--(A24);
\draw [thick,->] (A24)-- ++(0.4*\wi,0);
\draw [thick] (A45)--(A46);
\draw [thick,->] (A46)-- ++(0.4*\wi,0);
\fi
\fi

\ifgridpictwo
\foreach \w/\x/\lab in {12/1\m/2,68/69/1,45/46/1}
{
\ifgridpicbox
\draw ($(A\w)+0.5*(-\wi,\hgt)$) -- ($(A\x)+0.5*(\wi,\hgt)$) -- ($(A\x)+0.5*(\wi,-\hgt)$) -- ($(A\w)+0.5*(-\wi,-\hgt)$) -- ($(A\w)+0.5*(-\wi,\hgt)$);
\draw ($0.5*(A\w)+0.5*(A\x)$) node {\lab};
\fi
\ifgridpictrees
\draw [black!10] ($(A\w)+0.5*(-\wi,\hgt)$) -- ($(A\x)+0.5*(\wi,\hgt)$) -- ($(A\x)+0.5*(\wi,-\hgt)$) -- ($(A\w)+0.5*(-\wi,-\hgt)$) -- ($(A\w)+0.5*(-\wi,\hgt)$);
\fi
}
\foreach \w/\x/\y/\z/\lab in {22/2\m/3\m/32/3,47/4\m/5\m/57/4,610/6\m/7\m/710/5}
{
\ifgridpicbox
\draw ($(A\w)+0.5*(-\wi,\hgt)$) -- ($(A\x)+0.5*(\wi,\hgt)$) -- ($(A\y)+0.5*(\wi,-\hgt)$) -- ($(A\z)+0.5*(-\wi,-\hgt)$) -- ($(A\w)+0.5*(-\wi,\hgt)$);
\draw ($0.25*(A\w)+0.25*(A\x)+0.25*(A\y)+0.25*(A\z)$) node {\lab};
\fi
\ifgridpictrees
\draw ($(A\w)+0.5*(-\wi,\hgt)$) -- ($(A\x)+0.5*(\wi,\hgt)$) -- ($(A\y)+0.5*(\wi,-\hgt)$) -- ($(A\z)+0.5*(-\wi,-\hgt)$) -- ($(A\w)+0.5*(-\wi,\hgt)$);
\draw ($0.25*(A\w)+0.25*(A\x)+0.25*(A\y)+0.25*(A\z)$) node {\lab};
\fi
}
\ifgridpictrees
\draw [thick] (A12)--(A14);
\draw [thick] (A14)to [out=40,in=140] (A16);
\draw [thick] (A16) -- (A17);
\draw [thick] (A17)to [out=40,in=140] (A19);
\draw [thick] (A15)to [out=25,in=155] (A18);
\draw [thick,->] (A18)-- ++(0.5*\wi,0);
\draw [thick] (A19)--(A111);
\draw [thick,->] (A111)-- ++(0.4*\wi,0);
\draw [thick] (A45)--(A46);
\draw [thick,->] (A46)-- ++(0.4*\wi,0);
\draw [thick] (A68)--(A69);
\draw [thick,->] (A69)-- ++(0.4*\wi,0);
\fi
\fi

\ifgridpicone
\ifgridpictrees
\foreach \y in {11,10,9,8,7,6,4,2,1}
{
\draw [fill=blue,blue] (A1\y) circle [radius=\vx];
}
\foreach \y in {5,3}
{
\draw [red,fill=red] (A1\y) circle [radius=\vx];
}
\fi
\fi
\ifgridpictwo
\ifgridpictrees
\foreach \y in {11,10,9,7,6,4,3,2}
{
\draw [fill=blue,blue] (A1\y) circle [radius=\vx];
}
\foreach \y in {5,8}
{
\draw [red,fill=red] (A1\y) circle [radius=\vx];
}
\fi
\fi

\draw ($(A00)-0.5*(\wi,\hgt)$) -- ($(A0\m)+0.5*(\wi,-\hgt)$);
\draw ($(A00)+0.5*(\wi,\hgt)$) -- ($(A\n0)+0.5*(\wi,-\hgt)$);

\end{tikzpicture}
\caption{Steps \ref{stepA} and \ref{stepB} of the embedding when all the path-like trees are paths. The arrows indicate the embedded edge is connected to $X=\{v_{r+1},\ldots,v_n\}$.}
\label{fig:grid3}
\end{figure}

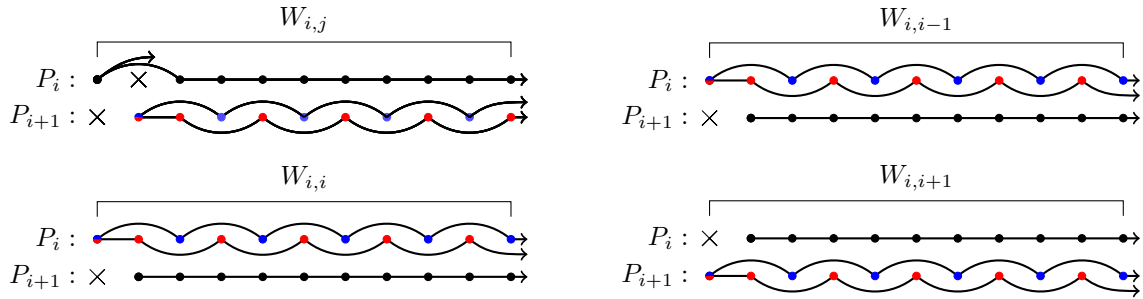
\begin{figure}[p!]
\centering
\begin{tikzpicture}

\def\wi{0.55cm}
\def\hgt{0.5cm}
\def\n{2}
\def\m{11}
\def\vx{0.05cm}

\foreach \x in {1,...,\n}
\foreach \y in {1,...,\m}
{
\coordinate (A\x\y) at ($\x*(0,-\hgt)+\y*(\wi,0)$);
}

\draw ($(A11)-(\wi,0)$) node {$P_i:$};
\draw ($(A21)-(\wi,0)-(0.175,0)$) node {$P_{i+1}:$};

\draw ($(A11)+(0,0.3)$) -- ($(A11)+(0,0.5)$) -- ($(A111)+(0,0.5)$) -- ($(A111)+(0,0.3)$);
\draw ($0.5*(A11)+0.5*(A111)+(0,0.8)$) node {$W_{i,j}$};

\foreach \x in {1}
\foreach \y in {3,...,\m}
{
\draw [fill] (A\x\y)  circle [radius=\vx];
}
\foreach \y in {10,8,6,4}
{

\draw [fill] (A11)  circle [radius=\vx];
\draw [fill] (A12)  -- ++(0.1,0.1) -- ++(-0.2,-0.2)  -- ++(0.1,0.1)  -- ++(0.1,-0.1)  -- ++(-0.2,0.2);
\draw [fill] (A21)  -- ++(0.1,0.1) -- ++(-0.2,-0.2)  -- ++(0.1,0.1)  -- ++(0.1,-0.1)  -- ++(-0.2,0.2);

\draw [thick] (A1\m) -- (A13);
\draw [thick,->] (A1\m) -- ++(0.4*\wi,0);
\draw [thick] (A11)to [out=40,in=140] (A13);
\draw [thick,->] (A11)to [out=40,in=180] ($(A12)+(0,0.3)+(0.4*\wi,0)$);

\foreach \x/\y in {2/4,4/6,6/8,8/10}
{
\draw [thick] (A2\x)to [out=40,in=140] (A2\y);
}
\foreach \x/\y in {3/5,5/7,7/9,9/11}
{
\draw [thick] (A2\x)to [out=-40,in=-140] (A2\y);
}
\draw [thick] (A22) -- (A23);
\draw [thick,->] (A2\m) -- ++(0.4*\wi,0);
\draw [thick,->] (A210)to [out=40,in=180] ($(A211)+(0,0.2)+(0.4*\wi,0)$);

\draw [blue!70,fill=blue!70] (A2\y)  circle [radius=\vx];
}
\foreach \y in {11,9,7,5,3}
{
\draw [red,fill=red] (A2\y)  circle [radius=\vx];
}
\fill[blue] ($(A22)$) -- ++(180:\vx) arc (180:0:\vx) -- cycle;
\draw [blue] ($(A22)-(\vx,0)$) arc (180:0:\vx);
\fill[red] ($(A22)$) -- ++(180:\vx) arc (-180:0:\vx) -- cycle;
\draw [red] ($(A22)-(\vx,0)$) arc (-180:0:\vx);
\end{tikzpicture}\hspace{1cm}\begin{tikzpicture}

\def\wi{0.55cm}
\def\hgt{0.5cm}
\def\n{2}
\def\m{11}
\def\vx{0.05cm}

\foreach \x in {1,...,\n}
\foreach \y in {1,...,\m}
{
\coordinate (A\x\y) at ($\x*(0,-\hgt)+\y*(\wi,0)$);
}

\draw ($(A11)-(\wi,0)$) node {$P_i:$};
\draw ($(A21)-(\wi,0)-(0.175,0)$) node {$P_{i+1}:$};

\draw ($(A11)+(0,0.3)$) -- ($(A11)+(0,0.5)$) -- ($(A111)+(0,0.5)$) -- ($(A111)+(0,0.3)$);
\draw ($0.5*(A11)+0.5*(A111)+(0,0.8)$) node {$W_{i,i-1}$};

\foreach \x in {2}
\foreach \y in {2,...,\m}
{
\draw [fill] (A\x\y)  circle [radius=\vx];
}

\draw [fill] (A11)  circle [radius=\vx];
\draw (A22)  circle [radius=\vx];
\draw [fill] (A21)  -- ++(0.1,0.1) -- ++(-0.2,-0.2)  -- ++(0.1,0.1)  -- ++(0.1,-0.1)  -- ++(-0.2,0.2);

\draw [thick] (A2\m) -- (A22);
\draw [thick,->] (A2\m) -- ++(0.4*\wi,0);

\foreach \x/\y in {1/3,3/5,5/7,7/9,9/11}
{
\draw [thick] (A1\x)to [out=40,in=140] (A1\y);
}
\foreach \x/\y in {2/4,4/6,6/8,8/10}
{
\draw [thick] (A1\x)to [out=-40,in=-140] (A1\y);
}
\draw [thick] (A11) -- (A12);
\draw [thick,->] (A1\m) -- ++(0.4*\wi,0);
\draw [thick,->] (A110)to [out=-40,in=180] ($(A111)-(0,0.2)+(0.4*\wi,0)$);

\fill[blue] ($(A11)$) -- ++(180:\vx) arc (180:0:\vx) -- cycle;
\draw [blue] ($(A11)-(\vx,0)$) arc (180:0:\vx);
\fill[red] ($(A11)$) -- ++(180:\vx) arc (-180:0:\vx) -- cycle;
\draw [red] ($(A11)-(\vx,0)$) arc (-180:0:\vx);

\foreach \y in {10,8,6,4,2}
{
\draw [red,fill=red] (A1\y)  circle [radius=\vx];
}
\foreach \y in {11,9,7,5,3}
{
\draw [blue,fill=blue] (A1\y)  circle [radius=\vx];
}
\end{tikzpicture}

\vspace{0.2cm}

\begin{tikzpicture}

\def\wi{0.55cm}
\def\hgt{0.5cm}
\def\n{2}
\def\m{11}
\def\vx{0.05cm}

\foreach \x in {1,...,\n}
\foreach \y in {1,...,\m}
{
\coordinate (A\x\y) at ($\x*(0,-\hgt)+\y*(\wi,0)$);
}

\draw ($(A11)-(\wi,0)$) node {$P_i:$};
\draw ($(A21)-(\wi,0)-(0.175,0)$) node {$P_{i+1}:$};

\draw ($(A11)+(0,0.3)$) -- ($(A11)+(0,0.5)$) -- ($(A111)+(0,0.5)$) -- ($(A111)+(0,0.3)$);
\draw ($0.5*(A11)+0.5*(A111)+(0,0.8)$) node {$W_{i,i}$};

\foreach \x in {2}
\foreach \y in {2,...,\m}
{
\draw [fill] (A\x\y)  circle [radius=\vx];
}
\foreach \y in {10,8,6,4,2}
{
}
\foreach \y in {11,9,7,5,3,1}
{
}
\draw [fill] (A11)  circle [radius=\vx];
\draw (A22)  circle [radius=\vx];
\draw [fill] (A21)  -- ++(0.1,0.1) -- ++(-0.2,-0.2)  -- ++(0.1,0.1)  -- ++(0.1,-0.1)  -- ++(-0.2,0.2);

\draw [thick] (A2\m) -- (A22);
\draw [thick,->] (A2\m) -- ++(0.4*\wi,0);

\foreach \x/\y in {1/3,3/5,5/7,7/9,9/11}
{
\draw [thick] (A1\x)to [out=40,in=140] (A1\y);
}
\foreach \x/\y in {2/4,4/6,6/8,8/10}
{
\draw [thick] (A1\x)to [out=-40,in=-140] (A1\y);
}
\draw [thick] (A11) -- (A12);
\draw [thick,->] (A1\m) -- ++(0.4*\wi,0);
\draw [thick,->] (A110)to [out=-40,in=180] ($(A111)-(0,0.2)+(0.4*\wi,0)$);

\fill[blue] ($(A11)$) -- ++(180:\vx) arc (180:0:\vx) -- cycle;
\draw [blue] ($(A11)-(\vx,0)$) arc (180:0:\vx);
\fill[red] ($(A11)$) -- ++(180:\vx) arc (-180:0:\vx) -- cycle;
\draw [red] ($(A11)-(\vx,0)$) arc (-180:0:\vx);

\foreach \y in {10,8,6,4,2}
{
\draw [red,fill=red] (A1\y)  circle [radius=\vx];
}
\foreach \y in {11,9,7,5,3}
{
\draw [blue,fill=blue] (A1\y)  circle [radius=\vx];
}
\end{tikzpicture}\hspace{1cm}\begin{tikzpicture}

\def\wi{0.55cm}
\def\hgt{0.5cm}
\def\n{2}
\def\m{11}
\def\vx{0.05cm}

\foreach \x in {1,...,\n}
\foreach \y in {1,...,\m}
{
\coordinate (A\x\y) at ($-\x*(0,-\hgt)+\y*(\wi,0)+3*(\wi,0)$);
}

\draw ($(A21)-(\wi,0)$) node {$P_i:$};
\draw ($(A11)-(\wi,0)-(0.175,0)$) node {$P_{i+1}:$};

\draw ($(A21)+(0,0.3)$) -- ($(A21)+(0,0.5)$) -- ($(A211)+(0,0.5)$) -- ($(A211)+(0,0.3)$);
\draw ($0.5*(A21)+0.5*(A211)+(0,0.8)$) node {$W_{i,i+1}$};

\foreach \x in {2}
\foreach \y in {2,...,\m}
{
\draw [fill] (A\x\y)  circle [radius=\vx];
}
\foreach \y in {10,8,6,4,2}
{
}
\foreach \y in {11,9,7,5,3,1}
{
}
\draw [fill] (A11)  circle [radius=\vx];
\draw (A22)  circle [radius=\vx];
\draw [fill] (A21)  -- ++(0.1,0.1) -- ++(-0.2,-0.2)  -- ++(0.1,0.1)  -- ++(0.1,-0.1)  -- ++(-0.2,0.2);

\draw [thick] (A2\m) -- (A22);
\draw [thick,->] (A2\m) -- ++(0.4*\wi,0);

\foreach \x/\y in {1/3,3/5,5/7,7/9,9/11}
{
\draw [thick] (A1\x)to [out=40,in=140] (A1\y);
}
\foreach \x/\y in {2/4,4/6,6/8,8/10}
{
\draw [thick] (A1\x)to [out=-40,in=-140] (A1\y);
}
\draw [thick] (A11) -- (A12);
\draw [thick,->] (A1\m) -- ++(0.4*\wi,0);
\draw [thick,->] (A110)to [out=-40,in=180] ($(A111)-(0,0.2)+(0.4*\wi,0)$);

\fill[blue] ($(A11)$) -- ++(180:\vx) arc (180:0:\vx) -- cycle;
\draw [blue] ($(A11)-(\vx,0)$) arc (180:0:\vx);
\fill[red] ($(A11)$) -- ++(180:\vx) arc (-180:0:\vx) -- cycle;
\draw [red] ($(A11)-(\vx,0)$) arc (-180:0:\vx);

\foreach \y in {10,8,6,4,2}
{
\draw [red,fill=red] (A1\y)  circle [radius=\vx];
}
\foreach \y in {11,9,7,5,3}
{
\draw [blue,fill=blue] (A1\y)  circle [radius=\vx];
}
\end{tikzpicture}
\caption{Embedding part of $P_i$ and $P_{i+1}$ together to cover $W_{i,j}$ in the four cases $j\leq i-2$, $j=i-1$, $j=i$ and $j=i+1$, while omitting only the vertices marked by a $\times$.}
\label{fig:turnaround1}
\end{figure}


\noindent For each $i\in [r]$, $W_i\cup X$ contains every vertex in $V(K_n)$ except for one vertex for each star $S_j$ that is larger than $P_i$ (which together form a leftmost interval in the ordering $v_1,\ldots,v_n$), and therefore
 \begin{align}
 |W_i\cup X|-|P_i|&=\big(n-|\{j\in [r]:|P_i|<|S_j|\}|\big)-\big(n-|\{j\in [r]:|P_i|<|S_j|\}|-|\{j\in [r]:|P_i|<|P_j|\}|\big)\nonumber\\
 &=|\{j\in [r]:|P_i|<|P_j|\}|=i-1,\label{eqn:WiXsketch}
 \end{align}
so we have the following principle for each $i\in [r]$.

 \stepcounter{propcounter}
 \begin{enumerate}[label = \textbf{\Alph{propcounter}}]
\item If $P_i$ is embedded into $K_n[W_i\cup X]$ then there are exactly $(i-1)$ unused vertices in $W_i\cup X$.\label{prop:embeddingprinciple}
\end{enumerate}

As we will see (using a weaker condition, even), if $P_1,\ldots,P_r$ are embedded so that each vertex in $W_i$ has at most 1 rightward neighbour (according to our ordering $v_1,\dots,v_n$) in the embedding of each $P_i$, then each $v_j$ with $j\in [r]$ will have enough remaining rightward neighbours to embed $S_j$ with $v_j$ as its centre and using edges going to the right from $v_j$ (so that these stars are then disjoint). It is easy to embed $P_1$ like this: starting by embedding an arbitrary vertex of $P_1$ to $v_n$, embed $P_1$ vertex by vertex, each time attaching a neighbour of an already embedded vertex of $P_1$ using the right-most unoccupied vertex in $v_1,\ldots,v_n$. It is even fairly straightforward to embed $P_2$ like this while avoiding the edges of such an embedding of $P_1$, but after this it becomes increasingly difficult to embed the trees. Therefore, essentially (omitting only one initial step we introduce later in the sketch), after embedding $P_1$ we embed the trees in pairs $P_i$ and $P_{i+1}$ (for increasing even $i$), allowing some vertices in the embeddings to have two rightward neighbours in one embedding if they have no rightward neighbours in the other embedding (as in Figure~\ref{fig:turnaround1}). In fact, if a vertex has two rightward neighbours in one embedding then we will use \ref{prop:embeddingprinciple} to omit it entirely from the other embedding. To summarise,
we will embed the trees $P_1,\ldots,P_r$ disjointly into $K_n$ under the following rule (as well as \ref{prop:rule1}).

\begin{enumerate}[label = \textbf{A\arabic{enumi}}]\stepcounter{enumi}
\item For each $i,j\in [r]$ with $v_j\in W_i$, the following hold.\label{prop:rule2}
\begin{itemize}
\item If $i=1$, then $v_j$ has at most 1 rightward neighbour in the embedding of $P_i$.
\item If $i$ is even and $v_j\notin W_{i+1}$, then $v$ has at most 1 rightward neighbour in the embedding of $P_i$.
\item If $i$ is even and $v_j\in W_{i+1}$, then $v$ has at most 2 rightward neighbours in total in the embeddings of $P_i$ and $P_{i+1}$.
\end{itemize}
\end{enumerate}

If the trees $P_1,\ldots,P_r$ are embedded obeying \ref{prop:rule1} and \ref{prop:rule2}, then we can embed the stars $S_1,\ldots,S_r$ as each vertex $v_i$, $i\in [r]$, will have at least $|S_i|-1$ neighbouring rightward edges which have not been used in the embeddings of $P_1,\ldots,P_r$. To see this, take $i\in [r]$ and let $j$ be the largest $j\in [r]$ such that $v_i\in W_j$, or, equally, $j$ is the number of path-like trees larger than $S_i$ in the TPC sequence. By \ref{prop:rule1} and \ref{prop:rule2}, $v_i$ is in at most $j$ rightward edges in the embeddings of $P_1,\ldots,P_r$, and therefore has at least $n-i-j$ rightward neighbouring edges that have not been used. However, there are $i+j-1$ trees which appear in the TPC sequence before $S_i$, and therefore $v_i$ has at least $|S_i|-1$ unused rightward neighbouring edges.

Thus, it is sufficient to embed $P_1,\ldots,P_r$ such that \ref{prop:rule1} and \ref{prop:rule2} hold. The rough form of our embedding will be that, to embed say $P_i$ and $P_{i+1}$ for some even $i$, we look at the embedding of $P_1,\ldots,P_{i-1}$, and deduce from \ref{prop:rule2} that $W_i$ can be partitioned into $i+1$ sets as $W_{i,1}\cup \ldots\cup W_{i,i+1}$ so that these sets are independent sets in the graph $H_i$ of the previously embedded edges. Then, for each $j\in [i+1]$, letting $G_i$ be the complement of $H_i$ in $K_n$, we can embed part of the trees $P_i$ and $P_{i+1}$ in $G_i$ to cover (most of) the vertices in $W_i\cap W_{i,j}$ and $W_{i+1}\cap W_{i,j}$, respectively, where it is very helpful for this embedding that $G_i[W_{i,j}]$ is a complete graph.
To discuss the parts of the trees we use for this, let us assume for now that $P_1,\ldots,P_r$ are all paths (the more general case is discussed at the end of the sketch). We will also focus on embedding the parts to cover vertices in $\{v_1,\ldots,v_r\}$ and postpone to the end of the sketch discussing how they can be embedded with the rest of the tree so that the parts are correctly connected up and that all the vertices in $X=\{v_{r+1},\ldots,v_n\}$ are used.

Assume then, that each $P_i$, $i\in [r]$, is a path. Each $P_i$ has two `ends' (see Definition~\ref{defn:ends}) -- here, subpaths which end in a leaf of $P_i$ -- which are easy to embed in a complete subgraph of $K_n$ like $G_i[W_{i,j}]$ so that every vertex has one rightward neighbour and the vertex which needs to be connected to the rest of the embedding of $P_i$ is in $X$. However, we have in general more sets $W_{i,j}$ than ends in $P_i$, so we have to use what we call  `artificial ends' (see Definition~\ref{defn:ends}) -- here, long subpaths of $P_i$ -- which have 2 vertices that need to be embedded to $X$ to be connected to the rest of the embedding of $P_i$. We can embed such an artificial end into a complete subgraph of $K_n$ like $G_i[W_{i,j}]$ as if it were an end, but so that one vertex (the leftmost vertex) has a second rightward edge. We think of this vertex as the `turnaround vertex', and note that it poses no trouble to \ref{prop:rule2}, as long as it is then omitted from the embedding of $P_{i+1}$ (see Figure~\ref{fig:turnaround1}).

We now introduce the added initial step before embedding $P_1$. Note that, with \ref{prop:rule2} in mind, for each even $i\in [r]$, 
the vertices in $W_i\setminus W_{i+1}$ must have just one (or zero) rightward neighbours in $P_i$; in particular, we must avoid these vertices being one of our `turnaround' vertices.
Therefore, we start our embedding by, for each even $i\in [r]$, embedding part of $P_i$ (one of its ends) to cover $W_i\setminus W_{i+1}$, guaranteeing that each of these vertices have exactly one rightward neighbour (note that these embeddings are vertex-disjoint). Including this, the steps of the embedding are then the following (see Figures~\ref{fig:grid2} and~\ref{fig:grid3}).

\begin{enumerate}[label = \textbf{\Roman{enumi}}]
\item For each even $i\in [r]$, we embed part of $P_i$ to cover $W_i\setminus W_{i+1}$.\label{stepA}
\item We embed $P_1$.\label{stepB}
\item For each even $i\in [r]$ in turn, we embed the rest of $P_i$ and $P_{i+1}$ together.\label{stepC}
\end{enumerate}

After \ref{stepA}, let $H_0$ be the union of the embedded parts of the trees, noting that $H_0[W_1]$ is a forest (if each $P_i$ is a path then it is a path forest), and therefore has chromatic number at most 2. Thus, to help us perform~\ref{stepB}, we can partition $W_1=W_{1,1}\cup W_{1,2}$ so that $H_0[W_{1,1}]$ and $H_0[W_{1,2}]$ are independent sets. 
To embed $P_1$, using vertex set $W_1\cup X$ (as required by~\ref{prop:rule1}),  such that each vertex in $W_1$ has just one rightward edge (as required by \ref{prop:rule2}), we can use the two ends of $P_1$ to cover $W_{1,1}$ and $W_{1,2}$ respectively (see Figure~\ref{fig:grid3}).

Now, suppose $i\in [r]$ is even and we have embedded $P_j$ for each $2\leq j<i$ in addition to carrying out steps \ref{stepA} and \ref{stepB}, obeying \ref{prop:rule1} and \ref{prop:rule2}, and wish to embed $P_i$ and $P_{i+1}$ (as in~\ref{stepC}). Let $H_i$ be the set of edges embedded so far. Note that every vertex in $W_{i+1}$ has at most $i$ rightward edges in $H_i$ --- at most one in the embedding of $P_1$, at most two in the embedding of $P_j$ and $P_{j+1}$ for each even $j<i$ (for at most $i-2$ in total), and at most one in $H_0$. Therefore $H_i[W_{i+1}]$ has chromatic number at most $i+1$, and thus can be partitioned as $W_{i,1}\cup \ldots\cup W_{i,i+1}$ so that each set of this partition is an independent set in $H_i$.

Then, for each $j\in [i+1]$, we can take an artificial end in each of $P_i$ and $P_{i+1}$ and embed these together so that the turnaround vertex in one embedding is omitted from the other embedding, and vice versa, and no other vertex in $W_{i,j}$ is omitted, using the first pattern in  Figure~\ref{fig:turnaround1}. However, we actually cannot do this for all $j\in [i+1]$ as it will omit slightly too many vertices from the embedding for \ref{prop:embeddingprinciple}, and therefore we only do this for each $j\leq i-2$. For each $j$ with $i-1\leq j\leq i+1$, we use that we have one end of $P_i$ and two ends of $P_{i+1}$ that we have yet not embedded (only one end of $P_i$ is embedded so far, in \ref{stepA}).
As depicted in Figure~\ref{fig:turnaround1}, we can embed an end of $P_{i+1}$ into $W_{i,i-1}$ to cover all but the leftmost vertex which we can then use as the turnaround vertex to embed an artificial end of $P_i$ to cover all of the vertices in $W_{i,i-1}$. Similarly, we take the remaining end of $P_{i+1}$ and do likewise for $W_{i,i}$, omitting again the leftmost vertex of $W_{i,i}$ in the embedding. Finally, for $W_{i,i+1}$, we do the same with $i$ and $i+1$ swapped, using the remaining end of $P_i$ and omitting the leftmost vertex from $W_{i,i+1}$ in the embedding. Over all $j\in [i+1]$, then, we will have omitted $(i-2)+1=i-1$ vertices from $W_i\setminus W_{i+1}$ in the embedding of parts of $P_i$ and $(i-2)+2=i$ vertices from $W_i\setminus W_{i+1}$ in the embedding of parts of $P_{i+1}$. Thus, by \ref{prop:embeddingprinciple}, there are enough unused vertices in $X$ to complete the embeddings of both $P_i$ and $P_{i+1}$, if we can connect the embedded parts together correctly and embed the remaining vertices, so that these embeddings are edge-disjoint and use edges in $G_i$ (which is actually rather straightforward in the special case when the trees $P_i$ are all paths).

\medskip

\noindent\textbf{Non-path path-like trees.} To carry out (something like) the above embedding when, more generally, the trees  $P_1,\ldots,P_r$ are only `path-like' (with at most $\lambda n$ leaves), we take each tree $P_i$ and decompose it into pieces, finding many small pieces that are `ends' or `artificial ends'. An end, effectively, is part of the tree we can embed into a complete subgraph of $K_n$ so that each vertex has only one rightward neighbour, while, roughly, for an artificial end we can do so similarly, but must use one `turnaround vertex', which will be the leftmost vertex covered. Ends and artificial ends are defined precisely in Definition~\ref{defn:ends} and embedded in Section~\ref{section_ends}, but to keep things simpler here we only include Figure~\ref{fig:morecomplicated} as an example of embedding a more complicated artificial end for $P_i$ by embedding it from the right with a turnaround vertex on the far left.

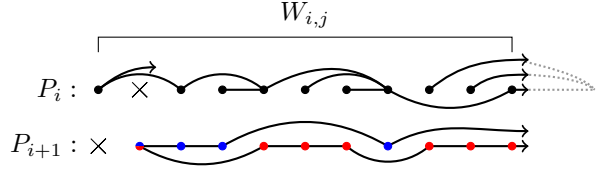
\begin{figure}
\centering
\begin{tikzpicture}

\def\wi{0.55cm}
\def\hgt{0.75cm}
\def\n{2}
\def\m{11}
\def\vx{0.05cm}

\foreach \x in {1,...,\n}
\foreach \y in {1,...,\m}
{
\coordinate (A\x\y) at ($\x*(0,-\hgt)+\y*(\wi,0)$);
}

\draw ($(A11)-(\wi,0)$) node {$P_i:$};
\draw ($(A21)-(\wi,0)-(0.175,0)$) node {$P_{i+1}:$};

\def\lift{0.2}
\draw ($(A11)+(0,0.3)+(0,\lift)$) -- ($(A11)+(0,0.5)+(0,\lift)$) -- ($(A111)+(0,0.5)+(0,\lift)$) -- ($(A111)+(0,0.3)+(0,\lift)$);
\draw ($0.5*(A11)+0.5*(A111)+(0,0.8)+(0,\lift)$) node {$W_{i,j}$};

\foreach \x in {1}
\foreach \y in {3,...,\m}
{
\draw [fill] (A\x\y)  circle [radius=\vx];
}

\draw [fill] (A11)  circle [radius=\vx];
\draw [fill] (A12)  -- ++(0.1,0.1) -- ++(-0.2,-0.2)  -- ++(0.1,0.1)  -- ++(0.1,-0.1)  -- ++(-0.2,0.2);
\draw [fill] (A21)  -- ++(0.1,0.1) -- ++(-0.2,-0.2)  -- ++(0.1,0.1)  -- ++(0.1,-0.1)  -- ++(-0.2,0.2);

\draw [thick,densely dotted,black!40]  ($(A111)+(0,0.2)+(0.4*\wi,0)$) to [out=180,in=140] ($(A111)+(2*\wi,0)$);
\draw [thick,densely dotted,black!40] ($(A111)+(0,0.4)+(0.4*\wi,0)$) to [out=180,in=130]  ($(A111)+(2*\wi,0)$);
\draw [thick,densely dotted,black!40] ($(A111)+(0.4*\wi,0)$) -- ($(A111)+(2*\wi,0)$);

\draw [thick] (A18)to [out=-30,in=-150] (A111);
\draw [thick,->] (A110)to [out=40,in=180] ($(A111)+(0,0.2)+(0.4*\wi,0)$);
\draw [thick,->] (A19)to [out=40,in=180] ($(A111)+(0,0.4)+(0.4*\wi,0)$);
\draw [thick] (A17) -- (A18);
\draw [thick] (A15)to [out=30,in=140] (A18);
\draw [thick] (A16)to [out=40,in=140] (A18);
\draw [thick] (A13)to [out=40,in=140] (A15);
\draw [thick] (A15) -- (A14);
\draw [thick,->] (A1\m) -- ++(0.4*\wi,0);
\draw [thick] (A11)to [out=40,in=140] (A13);
\draw [thick,->] (A11)to [out=40,in=180] ($(A12)+(0,0.3)+(0.4*\wi,0)$);

\foreach \x/\y in {4/8}
{
\draw [thick] (A2\x)to [out=30,in=150] (A2\y);
}
\foreach \x/\y in {7/9}
{
\draw [thick] (A2\x)to [out=-40,in=-140] (A2\y);
}
\foreach \x/\y in {2/5}
{
\draw [thick] (A2\x)to [out=-30,in=-150] (A2\y);
}
\draw [thick] (A22) -- (A24);
\draw [thick] (A211) -- (A29);
\draw [thick] (A25) -- (A27);
\draw [thick,->] (A2\m) -- ++(0.4*\wi,0);
\draw [thick,->] (A28)to [out=30,in=180] ($(A211)+(0,0.2)+(0.4*\wi,0)$);

\foreach \y in {8,4,3}
{
\draw [fill=blue,blue](A2\y)  circle [radius=\vx];
}
\foreach \y in {11,10,9,7,6,5}
{
\draw [fill=red,red] (A2\y)  circle [radius=\vx];
}
\fill[blue] ($(A22)$) -- ++(180:\vx) arc (180:0:\vx) -- cycle;
\draw [blue] ($(A22)-(\vx,0)$) arc (180:0:\vx);
\fill[red] ($(A22)$) -- ++(180:\vx) arc (-180:0:\vx) -- cycle;
\draw [red] ($(A22)-(\vx,0)$) arc (-180:0:\vx);

\end{tikzpicture}
\caption{Embedding part of $P_i$ and $P_{i+1}$ together to cover $W_{i,j}$ when $j\leq i-2$, while omitting only the vertices marked by a $\times$, when the `artificial end' of $P_i$ used is not a path but a more complicated tree. Here the multiple arrows leading to the right make it more difficult to embed this artificial end, so additional vertices in $W_{i,j}$ may be omitted, to be later covered by other leaves of the tree.}\label{fig:morecomplicated}
\end{figure}

The remaining issue to discuss is how these parts of the tree $P_i$ can be embedded along with the  rest of the tree, so that they are all connected up and use all the vertices in $X=\{v_{r+1},\ldots,v_{n}\}$. We do not embed any of the edges of the stars $S_1,\ldots,S_r$ within $K_n[X]$, and as the trees $P_i$ have maximum degree at most $\lambda n$ (due to the limited number of leaves) it is not hard to embed the remaining part of the tree in $K_n[X]$ once the ends and artificial ends are embedded and connected together. The challenge then is to embed the ends and artificial ends appropriately connected together, which in part we make easier by reserving for each tree $P_i$ a set $A_i$ of 20 vertices in $X$ and making sure that when we embed $P_i$ we have used no edges in $K_n$ between $\{v_1,\ldots,v_r\}$ and $A_i$. The main solution to this, however, is to first observe that when we embed an end (or artificial end) to cover most of $W_{i,j}$, the more edges that will lie across the graph between $W_{i,j}$ and $\{v_{r+1},\ldots,v_n\}$, the more challenging this will be (see Figure~\ref{fig:morecomplicated}). However, the more such cross edges we need, the more leaves that end will have. We relax our embedding then, to allow an end (or artificial end) to miss some vertices in $W_{i,j}$ relative to the number of leaves in the artificial end, before later embedding another part of the tree so that the embedded leaves cover these missed vertices.

That is, we effectively start by splitting each path-like tree $P_i$ into five trees $E_{i,1},E_{i,2},F_{i,1}, F_{i,2}$ and $F_{i,3}$ (which pairwise intersect on at most one vertex), so that $E_{i,1}$ and $E_{i,2}$ are ends, $F_{i,2}$ has more leaves than $F_{i,1}$, and $F_{i,1},F_{i,2},F_{i,3}$ are all large trees. We then take $F_{i,1}$ and split it into pieces including $i+1$ artificial ends. Broadly, then, we carry out the embedding as described above, while embedding $E_{i,1}\cup E_{i,2}\cup F_{i,1}$ for each $i\in [r]$, additionally omitting some vertices in the sets $W_{i,j}$, $j\leq i+1$,
 as long as the number of vertices omitted is altogether at most a small proportion of the leaves of $F_{i,1}$. We then embed $F_{i,2}$, if necessary using vertices omitted in the sets $W_{i,j}$ as some leaves of the embedding of $F_{i,2}$ (so that \ref{prop:rule2} is obeyed). This embedding of $F_{i,2}$ is done similarly to how we embed star-like trees which are not stars, so we similarly postpone any further discussion of this, noting only that it is carried out in Section~\ref{section_key5}. In the last part of the embedding, we embed $F_{i,3}$, using this to make sure we use exactly the unused vertices in $X$. We additionally do this embedding so that a vertex with degree at most 2 is embedded to any vertex that has had many more neighbouring edges used than average so far in the embeddings. This allows our embeddings to be done iteratively, without using too many edges around any one vertex.

\medskip

\noindent \textbf{Paper outline.} In Section~\ref{section_prelim}, we cover some preliminaries including the main tree decomposition we use. In Section~\ref{section_mainembeddingscheme}, we cover our main embedding scheme using steps \ref{stepA} to \ref{stepC},
proving Theorem~\ref{theorem_treepacking_starlikestars} subject only to two key lemmas. In Section~\ref{section_starlike}, we show how to embed the star-like trees, thus proving Theorem~\ref{theorem_treepacking} from Theorem~\ref{theorem_treepacking_starlikestars}. Finally, in Section~\ref{section_ends}, we prove the two key lemmas.

\section{Preliminaries}\label{section_prelim}

\subsection{Notation}
Given a graph $G$, $|G|=|V(G)|$ is the number of vertices in $G$ and $e(G)=|E(G)|$ is the number of edges. For each $A\subset V(G)$, $N(A)=(\cup_{v\in A}N(v))\setminus A$, i.e., the set of vertices outside of $A$ adjacent to at least one vertex in $A$. Given $v\in V(G)$ and $A\subset V(G)$, $N(v,A)=N(v)\cap A$ and $d(v,A)=|N(v,A)|$. Given two disjoint sets $A,B\subset V(G)$, $e(A,B)$ is the number of edges with one endvertex in each of $A$ and $B$. The complement of a graph $G$ is the graph with vertex set $V(G)$ and edge set $\{uv:u,v\in V(G), u\neq v,uv\notin E(G)\}$. Given $A\subset V(G)$, $G[A]$ is the subgraph of $G$ induced on the vertex set $A$, and $G-A=G[V(G)\setminus A]$. If $v\in V(G)$, then we set $G-v=G-\{v\}$. If $e\in V(G)^{(2)}$, then $G+e$ and $G-e$ are the graphs with vertex set $V(G)$ and edge set $E(G)\cup\{e\}$ and $E(G)\setminus\{e\}$, respectively.

When considering an embedding $\phi$ of some graph $T$ in another graph $G$, we will sometimes use $\phi(T)$ to denote the embedded graph and $\mathrm{Im}(\phi)$ to denote the set of vertices in the image, thus, $\mathrm{Im}(\phi)=\phi(V(T))=V(\phi(T))$. When we extend an embedding $\phi$ and there is no risk of confusion, we will often abuse notation and also use $\phi$ for the extended embedding. When writing $x\ll y$, we mean that there is some non-decreasing function $f:(0,1]\to (0,1]$ such that if $x\leq f(y)$ then the following statements hold. Where there are more than two variables in a hierarchy, for example $x\ll y\ll z$, the implicit functions are chosen from right to left. 

\subsection{Tree decomposition}
We decompose our trees into edge-disjoint subtrees, calling this a tree decomposition, as follows.

\begin{definition}
Given any tree $T$, we say subgraphs $T_1,\dots, T_k$ of $T$ form a \emph{tree decomposition} of $T$ if they are edge-disjoint subtrees of $T$ such that $\bigcup_{i\in [k]} E(T_i)=E(T)$. The vertices appearing in more than one subtree are the \emph{connectors} of the tree decomposition.
\end{definition}

Our first decomposition lemma simply says that every tree has a decomposition into two trees where one subtree contains a fixed vertex and the other has roughly some desired size.

\begin{lemma}\label{lemma_divisionavoidingavertex}
	Let $T$ be a tree and let $t\in V(T)$. Let $m\in \mathbb{R}$ satisfy $1\leq m< |T|$. Then, $T$ has a tree decomposition into  $T_1$ and $T_2$ such that $m<|T_1|\leq 2m$ and $t\in V(T_2)$.
\end{lemma}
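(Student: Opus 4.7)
The plan opens with a trivial reduction: if $|T|\le 2m$, one may simply take $T_1:=T$ and $T_2:=(\{t\},\emptyset)$, since $m<|T|\le 2m$ and a single vertex is a valid subtree. So I will assume $|T|>2m$ henceforth.

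Root $T$ at $t$ and write $s(u)$ for the size of the subtree hanging below $u$. I would locate a vertex $v$ with $s(v)>m$ all of whose children have subtree size at most $m$; such a $v$ exists since $s(t)=|T|>m$, so descending from $t$ by always moving to a child with subtree size greater than $m$ must terminate. Let $w_1,\dots,w_k$ be the children of $v$: each satisfies $s(w_i)\le m$, while $\sum_i s(w_i)=s(v)-1\ge m$. Let $j$ be the smallest index with $1+\sum_{i\le j} s(w_i)>m$, and take $T_1$ to be the subtree consisting of $v$ together with the subtrees rooted at $w_1,\dots,w_j$. Then $|T_1|>m$ by choice, and minimality gives $|T_1|-s(w_j)\le m$, hence $|T_1|\le m+s(w_j)\le 2m$. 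Finally, let $T_2$ be the subtree of $T$ induced on $(V(T)\setminus V(T_1))\cup\{v\}$; the two subtrees are edge-disjoint and together cover $E(T)$.

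The remaining task is to verify $t\in V(T_2)$. If $v\neq t$, the edge from $v$ to its parent lies in $T_2$, along with the entire path from $v$ up to $t$. If $v=t$, the assumption $|T|>2m\ge |T_1|$ forces some child $w_i$ with $i>j$ to exist, so $tw_i\in E(T_2)$ and again $t\in V(T_2)$. The argument really boils down to a short partial-sum trick applied to the children of $v$, made possible by the bound $s(w_i)\le m$; the only small care needed is the $v=t$ boundary case, which the opening reduction to $|T|>2m$ keeps clean. I do not expect any serious obstacle here.
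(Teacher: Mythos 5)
Your proof is correct, but it takes a genuinely different route from the paper. You root $T$ at $t$, descend to a lowest vertex $v$ whose subtree has more than $m$ vertices (so every child subtree of $v$ has at most $m$ vertices), and then greedily accumulate child subtrees of $v$ until the total first exceeds $m$; the bound $s(w_j)\leq m$ then places $|T_1|$ in $(m,2m]$, and $t$ lies in the complementary piece because it is an ancestor of $v$ (or equals $v$, which belongs to both pieces). The paper instead argues extremally: among all decompositions with $|T_1|>m$ and $t\in V(T_2)$ it takes one minimising $|T_1|$, and shows via a local exchange at the connector (moving either the connector itself or a smallest branch of $T_1$ at the connector into $T_2$) that minimality forces $|T_1|\leq 2m$. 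Your construction is more explicit and algorithmic, directly exhibiting the decomposition; the paper's extremal phrasing is marginally shorter to write down. One small imprecision: your claim that $\sum_i s(w_i)=s(v)-1\geq m$ can fail when $m$ is not an integer (e.g.\ $s(v)=2$, $m=1.5$), but it is also not needed --- the existence of your index $j$ follows from $1+\sum_{i\leq k}s(w_i)=s(v)>m$, and the rest of the argument (including the case $j=1$, where minimality reads $1\leq m$) goes through as written.
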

\begin{proof}
	Among all tree decompositions $T_1,T_2$ of $T$ with $|T_1|>m$ and $t\in V(T_2)$, choose one minimising $|T_1|$. (Note that the set of such decompositions is non-empty, as we can simply pick $T_1=T$ and $T_2=T[\{t\}]$.) We will show that $|T_1|\leq 2m$.

	Let $s$ be the unique vertex in $V(T_1)\cap V(T_2)$, and let $N(s)\cap V(T_1)=\{x_1,\dots,x_k\}$. Note that $k\geq 1$ (since if $k=0$ then $V(T_1)=\{s\}$, contradicting $1\leq m<|T_1|$). If $k=1$, then $T_1-s$ and $T_2+sx_1$ give a tree decomposition, so we must have $|T_1|-1\leq m$ and hence $|T_1|\leq m+1\leq 2m$. On the other hand, if $k\geq 2$ then, letting $V_1,\dots,V_k$ be the vertex sets of the connected components of $T_1-s$, we may assume that $|V_k|\leq (|T_1|-1)/k$. As $T_1-V_k$ and $T[V(T_2)\cup V_k]$ give a tree decomposition, we must have  $|T_1|-|V_k|\leq m$. As $|V_k|\leq (|T_1|-1)/k$, we get $(1-1/k)|T_1|\leq m$ and hence $|T_1|\leq m/(1-1/k)\leq 2m$.
\end{proof}

For convenience, we record the following useful corollary of Lemma~\ref{lemma_divisionavoidingavertex}.

\begin{corollary}\label{corollary_decomp_three}
Let $T$ be a tree, and let $m_1,m_2\geq 1$ satisfy $|T|\geq 2m_1+m_2$. Then $T$ has a decomposition into three subtrees $T_1$, $T_2$ and $T_3$ such that $T_1$ and $T_2$ each contain exactly one connector of the decomposition and $m_i<|T_i|\leq 2m_i$ for each $i\in [2]$.
\end{corollary}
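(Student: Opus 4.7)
The plan is to iterate Lemma~\ref{lemma_divisionavoidingavertex} twice. First, I would pick an arbitrary vertex $t\in V(T)$ and apply Lemma~\ref{lemma_divisionavoidingavertex} with parameter $m=m_1$, using $t$ as the distinguished vertex. The hypothesis $m_1<|T|$ needed for the lemma follows from $|T|\geq 2m_1+m_2\geq 2m_1+1>m_1$. This produces a decomposition $T=T_1\cup T'$ with $m_1<|T_1|\leq 2m_1$ and $t\in V(T')$; write $s_1$ for the unique vertex of $V(T_1)\cap V(T')$.

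Next, since $|T|=|T_1|+|T'|-1$, we have $|T'|=|T|-|T_1|+1\geq (2m_1+m_2)-2m_1+1=m_2+1>m_2$, so Lemma~\ref{lemma_divisionavoidingavertex} applies to $T'$ with parameter $m=m_2$ and distinguished vertex $s_1$, giving a decomposition $T'=T_2\cup T_3$ with $m_2<|T_2|\leq 2m_2$ and $s_1\in V(T_3)$. Setting these together yields three subtrees $T_1,T_2,T_3$ whose edge sets are pairwise disjoint and cover $E(T)$, and whose sizes satisfy the required bounds $m_i<|T_i|\leq 2m_i$ for $i\in\{1,2\}$.

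It remains to verify the connector counts. Because $T_2$ and $T_3$ decompose $T'$, we have $V(T_2)\cup V(T_3)=V(T')$, so $V(T_1)\cap(V(T_2)\cup V(T_3))=V(T_1)\cap V(T')=\{s_1\}$; hence the only connector lying in $T_1$ is $s_1$. For $T_2$, let $s_2$ be the unique vertex of $V(T_2)\cap V(T_3)$. Any connector in $T_2$ is either in $V(T_1)\cap V(T_2)\subseteq\{s_1\}$ or in $V(T_2)\cap V(T_3)=\{s_2\}$; and if $s_1\in V(T_2)$ then $s_1\in V(T_2)\cap V(T_3)=\{s_2\}$, so $s_1=s_2$. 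Either way $T_2$ has exactly one connector, as required.

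I do not anticipate any real obstacle: the corollary is essentially a packaging of two applications of Lemma~\ref{lemma_divisionavoidingavertex}, and the only points deserving attention are the arithmetic $|T'|>m_2$ that licenses the second application, and the small argument above confirming that $T_1$ and $T_2$ each pick up exactly one connector (rather than accidentally two, if $s_1$ and $s_2$ were to coincide in the wrong way).
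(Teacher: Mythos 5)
Your proof is correct and follows essentially the same route as the paper's: two successive applications of Lemma~\ref{lemma_divisionavoidingavertex}, with the connector of the first split sent into $T_3$ in the second split, and the same case analysis (whether $s_1$ coincides with $s_2$) to confirm that $T_1$ and $T_2$ each contain exactly one connector. No gaps.
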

\begin{proof}
Picking $t\in V(T)$ arbitrarily, use Lemma~\ref{lemma_divisionavoidingavertex} to take a tree decomposition of $T$ into $T_1$ and $S$ so that $m_1<|T_1|\leq 2m_1$. Let $s$ be the unique connector in this decomposition, and note that $|S|\geq |T|-2m_1+1>m_2$. Using Lemma~\ref{lemma_divisionavoidingavertex}, take a tree decomposition of $S$ into $T_2$ and $T_3$ with $s\in V(T_3)$ and $m_2<|T_2|\leq 2m_2$. Let $s'$ be the unique connector in this decomposition of $S$. Note that $s'\notin V(T_1)$ and $s\notin V(T_2)$ unless $s=s'$, so that $T_1$ and $T_2$ each contain exactly one connector in the decomposition of $T$ into $T_1$, $T_2$ and $T_3$.
\end{proof}

The following lemma is used in the proof of both of the two key lemmas we use in our main embedding. It finds a tree decomposition such that some of the subtrees have in total exactly $m$ non-connector vertices, one of these pieces has size at least $m/2$, and each of these pieces have only one connector.
\begin{restatable}{lemma}{lemmaenddivision}\label{lemma_genuineenddivision}
	Let $t$ be a vertex of a tree $T$ and let $m$ be a positive integer with $1\leq m<|T|$. Then, there exists for some $k\geq 2$ a tree decomposition of $T$ as $T_1,\ldots,T_k$ such that $t\in V(T_k)$, $|T_k|=|T|-m$, $|T_1|-1\geq m/2$, and, for each $1\leq i\leq k-1$, $T_i$ contains exactly one connector.
\end{restatable}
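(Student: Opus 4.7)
The plan is to first apply Lemma~\ref{lemma_divisionavoidingavertex} with parameter $\lceil m/2 \rceil$ to obtain a preliminary decomposition of $T$ into subtrees $T_1'$ and $S$ with $\lceil m/2 \rceil < |T_1'| \leq 2\lceil m/2 \rceil \leq m+1$, $t \in V(S)$, and a unique connector $p \in V(T_1') \cap V(S)$. Taking $T_1 := T_1'$, this already satisfies $|T_1|-1 \geq \lceil m/2 \rceil \geq m/2$ and has exactly one connector. It remains to detach a further $m' := m - (|T_1|-1) \in [0,\lfloor m/2\rfloor]$ vertices from $S$ as additional subtrees $T_2,\dots,T_{k-1}$, each with exactly one connector, so that $T_k := S$ ends up with $|T|-m$ vertices and $T_1$'s one-connector condition is maintained.

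To do so, I would root $T$ (and hence $S$) at $t$ and perform $m'$ leaf-peeling steps. At each step, pick a leaf $\ell \neq t$ of the current $S$; such an $\ell$ exists since every tree of size at least $2$ has two leaves and $|S| \geq |T|-m+1 \geq 2$ at the start of every step. Then: \textbf{(i)} if $\ell \neq p$ and the parent of $\ell$ in $S$ is not $p$, peel $\ell$ off as a new $2$-vertex subtree $T[\{\ell,p(\ell)\}]$ with connector $p(\ell)$; \textbf{(ii)} if $\ell \neq p$ but the parent of $\ell$ in $S$ is $p$, absorb $\ell$ together with the edge $p\ell$ into $T_1$, keeping $p$ as $T_1$'s unique connector; \textbf{(iii)} if $\ell = p$, absorb $p$ together with the edge to its parent $p'$ in $S$ into $T_1$, and rename $p \leftarrow p'$, so that $T_1$'s unique connector becomes $p'$. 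Each step decreases $|S|$ by one, so after $m'$ such steps we obtain $|S| = |T|-m$ and set $T_k := S$.

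The one obstacle to verify is that after every step each subtree still has exactly one connector. Case~\textbf{(ii)} is essential for this: if we naively peeled a child of $p$ off as a new $2$-vertex piece with connector $p$, a later application of~\textbf{(iii)} would leave $T_1$ with two connectors (the old $p$, still shared with that $2$-vertex piece, and the new $p'$). Absorbing the child in~\textbf{(ii)}, and absorbing $p$ itself in~\textbf{(iii)}, prevents this clash. A short check confirms that all operations are well-defined --- in particular, the parent $p'$ in~\textbf{(iii)} lies in $V(S)$ because $V(S)$ remains a subtree containing $t$ and $p \neq t$ whenever case~\textbf{(iii)} fires --- that the size bound $|T_1|-1 \geq m/2$ can only grow, and that the boundary situation $|T|=m+1$, where $S$ eventually shrinks to the single vertex $\{t\}$, is handled within the same scheme.
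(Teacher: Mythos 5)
There is a genuine gap in the maintenance of the ``exactly one connector'' invariant. Your cases \textbf{(ii)} and \textbf{(iii)} correctly defuse the clash between a peeled piece and $T_1$, but they do not defuse the analogous clash between two peeled pieces. Concretely, suppose at some step you apply case \textbf{(i)} to a leaf $\ell$, creating the piece $T[\{\ell,p(\ell)\}]$ with connector $p(\ell)$, and that $\ell$ was the only child of $p(\ell)$, so that $p(\ell)$ becomes a leaf of the new $S$. If a later step applies case \textbf{(i)} to $p(\ell)$, the resulting piece $T[\{p(\ell),p^2(\ell)\}]$ contains two connectors of the decomposition: $p(\ell)$ (shared with the earlier piece) and $p^2(\ell)$ (shared with $S$). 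This is not avoidable by choosing leaves more cleverly. For example, let $T$ consist of the path $e\,d\,c\,t$ together with five further leaves $q_1,\dots,q_5$ attached to $t$, and take $m=7$. \cref{lemma_divisionavoidingavertex} applied with parameter $4$ may return $T_1'=T[\{t,q_1,\dots,q_5\}]$ with connector $p=t$ and $S$ the path $t\,c\,d\,e$, giving $m'=2$. At every remaining step the only admissible leaf lies on the bare path $c\,d\,e$, its parent is never $p=t$, and case \textbf{(iii)} never fires since $\ell\neq t$ is enforced; so you are forced into two consecutive applications of case \textbf{(i)}, and the second piece $T[\{d,c\}]$ has two connectors. (A similar unaddressed clash occurs in case \textbf{(ii)} itself, when the absorbed leaf $\ell$ is already the connector of an earlier peeled piece: $T_1$ then contains both $\ell$ and $p$ as connectors.)

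The underlying problem is that single-vertex peeling produces $2$-vertex pieces that can chain along a bare path, and your ``short check'' does not rule this out. A repair is possible in the spirit of your cases \textbf{(ii)}/\textbf{(iii)}: whenever the vertex you are about to peel (or absorb) is already the connector of a previously created piece, you must extend that existing piece by the new edge rather than create a new piece (respectively, merge that piece into $T_1$), so that each piece retains a single connector; one then has to re-verify the bookkeeping. The paper avoids the issue entirely by inducting on $m$: at each stage it splits off, via \cref{lemma_divisionavoidingavertex}, a single subtree $S'$ with $|S'|-1\geq m/2$ accounting for at least half of the remaining deficit, and recurses on the part containing $t$ with the reduced deficit $m'=m-(|S'|-1)$, merging $S'$ with one recursively produced piece if necessary so that every non-$T_k$ piece keeps exactly one connector (which moreover always lies in $T_k$). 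Your first step coincides with the paper's, but replacing the recursion by vertex-by-vertex peeling is where the argument breaks.
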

\begin{proof}
We prove this by induction on $m$, for all trees $T$ and vertices $t\in V(T)$. If $m=1$, then, using $|T|>m$, let $T_1$ be a subtree of $T$ having one edge and containing (at least) one leaf of $T$, $s$ say, which is not $t$. Then, $T_2=T-s$ and $T_1$ give the required decomposition of $T$ with $k=2$.

Assume then that $m>1$ and that the result holds for all $m'<m$. Let $t$ be a vertex of a tree $T$ with $|T|>m$.
Using Lemma~\ref{lemma_divisionavoidingavertex} as $1\leq (m+1)/2<|T|$, take a tree decomposition of $T$ into $S$ and $S'$ so that $t\in V(S)$ and $(m+1)/2<|S'|\leq m+1$, and let $s$ be the connector in this decomposition. Note that $|S'|-1\geq m/2$. Let $m'=m-(|S'|-1)$, so $0\leq m'<m$.
If $m'=0$, then $T_1=S'$ and $T_2=S$ gives the required decomposition, so assume that $m'\geq 1$. Therefore, by induction, for some $k'$ there is a decomposition of $S$ into subtrees $T_1,S_1,\ldots,S_{k'}$ such that $t\in V(T_1)$, $|T_1|=|S|-m'$ and, for each $i\in [k']$, $S_i$ contains exactly one connector (note that this vertex must be in $T_1$). By not taking up the size condition on $S_1$, we can assume without loss of generality that, if $s$ is not in $T_1$, then $s$ is in $S_1$ (and hence $s$ is in no other subtree in this decomposition of $S$).
Now, we have that $t\in V(T_1)$ and $|T_1|=|S|-m'=|S|+|S'|-1-m=|T|-m$. If $s$ is in $T_1$ then the sequence $S',S_1,\ldots,S_{k'},T_1$ gives the required decomposition with $k=k'+2$ as $|S'|-1\geq m/2$. If $s$ is not in $T_1$, then the sequence $S'\cup S_1,S_2,\ldots,S_{k'},T_1$ gives the required decomposition with $k=k'+1$ as $|S'\cup S_1|-1\geq |S'|-1\geq m/2$. This completes the inductive step, and hence the proof of the lemma.
\end{proof}

In what follows, in order to be precise, we use the following definition of a connector vertex of a subtree.
\begin{definition}\label{defn:convx}
    Given a subtree $S$ of a tree $T$, a vertex $v\in V(S)$ is a \emph{connector vertex} of $S$ in $T$ if $d_S(v)<d_T(v)$.
\end{definition}

Note that, for any tree decomposition $T_1,\ldots,T_k$ of a tree $T$, for each $i\in [k]$ any connector vertices of $T_i$ are connectors for the decomposition, but if some of the trees have only 1 vertex then $v$ can be a connector of the decomposition but not a connector vertex of $T_i$ in $T$, for some $i\in [k]$. However, although in this section we use some single vertex trees in our decompositions for convenience, in the applications later no trees in the decomposition will have only one vertex, so that the definitions will coincide.

Using Definition~\ref{defn:convx}, we turn to our main tree decomposition, Lemma~\ref{lemma_endblobs}, which is needed for the proof of the second key lemma,~\cref{lemma_finalembedding}, which is in turn used to carry out step~\ref{stepC} of the embedding in the proof sketch. 
For integers $m_1,\ldots,m_k$, and any tree $T$ with many times more than $\sum_{i\in [k]}m_i$ vertices, it gives a tree decomposition of $T$ into subtrees $T_1,\dots,T_\ell$ with $\ell\geq k$ which each have at most $2$ connector vertices and for which, for each $i\in [k]$, $T_i$ has of the order of $m_i$ vertices. We build up to this by showing first that a version of this is true for $k=1$ (Lemma~\ref{lem_endblobs0}) and 
$k\leq 3$
(Lemma~\ref{lem_endblobs1}), before proving the full lemma, Lemma~\ref{lemma_endblobs}.


\begin{lemma}\label{lem_endblobs0}
	Let $T$ be a tree, and let $S$ be a subtree of $T$ containing at most $2$ connector vertices in $T$. Let $m\geq 1$ with $|S|\geq m$. Then, $S$ has a decomposition into at most $4$ trees, each of which contains at most $2$ connector vertices in $T$, and one of which has at least $m$ and at most $6m$ vertices.
\end{lemma}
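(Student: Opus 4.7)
The plan is to case-split on the number of connector vertices of $S$ in $T$, which by hypothesis is $0$, $1$, or $2$. If $|S|=m$ the trivial one-tree decomposition already satisfies all requirements, so I will assume $|S|>m$ throughout.

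In the case that $S$ has at most one connector vertex in $T$, I would apply Lemma~\ref{lemma_divisionavoidingavertex} to $S$ with $t$ taken to be that connector vertex (or any vertex, if none exists). This produces $S_1, S_2$ with $m<|S_1|\le 2m$ and $t\in V(S_2)$; let $w$ denote the new connector of the decomposition. The key routine observation is that any vertex $x\ne w$ of $S_i$ has $d_{S_i}(x)=d_S(x)$, so $x$ is a connector of $S_i$ in $T$ iff it was a connector of $S$ in $T$. Hence $S_1$ has only $w$ as a connector in $T$, and $S_2$ has at most $w$ and $t$, i.e.\ at most two. Since $m\le |S_1|\le 2m\le 6m$, this is the required decomposition.

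The main work is the case where $S$ has two connector vertices $t_1,t_2$ in $T$. I would again apply Lemma~\ref{lemma_divisionavoidingavertex} to $S$ with $t=t_1$, getting $S_1,S'$ with $m<|S_1|\le 2m$, $t_1\in V(S')$, and new connector $w$. If $t_2\in V(S_1)$, then by the same degree observation as above $S_1$ has connector set $\{w,t_2\}$ and $S'$ has $\{w,t_1\}$, and the two-piece split $\{S_1,S'\}$ works. The delicate subcase is $t_2\in V(S')$, because then $S'$ houses three potential connectors $w,t_1,t_2$ in $T$ and must be split further.

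To handle this, I would identify the Steiner point $u$ of $\{w,t_1,t_2\}$ in $S'$, the unique vertex lying on each of the three pairwise paths in $S'$. If $u\notin\{w,t_1,t_2\}$, then removing $u$ from $S'$ produces three distinguished components containing $w$, $t_1$, $t_2$ respectively; absorbing any remaining components into the one containing $t_2$ and re-attaching $u$ to each piece yields a three-piece decomposition of $S'$ whose pieces have connector sets $\{w,u\}$, $\{t_1,u\}$, $\{t_2,u\}$ in $T$ (the absorbed extra components contribute no new connectors, since every vertex in them is only connected to the rest of $S$ through $u$). If instead $u\in\{w,t_1,t_2\}$, then one of these three vertices already lies on the path in $S'$ between the other two, and a two-piece split of $S'$ by removing $u$ suffices, with the analogous book-keeping of connectors. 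Combined with $S_1$, this gives at most four subtrees in all, each with at most two connector vertices in $T$, and $S_1$ has $m\le|S_1|\le 2m\le 6m$ as required.
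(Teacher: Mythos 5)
Your proof is correct. It shares the paper's two essential ingredients — Lemma~\ref{lemma_divisionavoidingavertex} to cut off a piece of controlled size, and the Steiner-point trick to break a subtree with three connector vertices into at most three pieces with at most two each — but organizes them differently, and in a way that slightly simplifies the case analysis. The paper cuts off a piece $S_1$ with $3m\le|S_1|\le 6m$ at an arbitrary location, so it must then handle the possibility that $S_1$ itself ends up with three connector vertices; in that case it re-splits $S_1$ into three pieces and relies on the lower bound $|S_1|\ge 3m$ to guarantee that one of the thirds still has at least $m$ vertices (this is exactly why the paper needs the wider window $[3m,6m]$ rather than $(m,2m]$). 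You instead anchor the application of Lemma~\ref{lemma_divisionavoidingavertex} at a connector vertex $t_1$ of $S$, which forces the size-controlled piece $S_1$ to inherit at most one of the original connectors and hence to have at most two connector vertices automatically; only the complementary piece $S'$ can ever accumulate three, and it carries no size constraint, so the tighter window $(m,2m]$ suffices. Both arguments yield at most four pieces. Your handling of the degenerate situations (the Steiner point coinciding with one of $w,t_1,t_2$, or $w$ coinciding with $t_1$ or $t_2$) is a little terse but the "analogous book-keeping" does go through, since counting $w$ and $u$ as connectors is always conservative and every absorbed component meets the rest of $S$ only at $u$.
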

\begin{proof} Note that there is nothing to prove if $|S|\leq 6m$, so assume $|S|> 6m$.
Using Lemma~\ref{lemma_divisionavoidingavertex}, take a tree decomposition of $S$ into subtrees $S_1$ and $S_2$ such that $3m\leq |S_1|\leq 6m$.

First, assume that $S_1$ contains at most $2$ connector vertices in $T$. We need to show that we can decompose $S_2$ into at most 3 trees, each of which contain at most $2$ connector vertices in $T$. Note that $S_2$ has at most $3$ connector vertices in $T$, so we may assume that $S_2$ has exactly $3$ such connector vertices $s_1,s_2,s_3$, for otherwise we are done. Consider the (unique) path $P$ between $s_1$ and $s_2$ in $S_2$, and let $v\in V(P)$ be the vertex of $P$ closest to $s_3$. Let $U_1,\dots,U_\ell$ be the vertex sets of the connected components of $S_1-v$, and write $U_i'=U_i\cup\{v\}$ for each $i\in[\ell]$. First assume that $v\not \in\{s_1,s_2,s_3\}$. Note that, if $i,j\in[3]$ are distinct, then $s_i,s_j$ cannot belong to the same component $U_i$, and hence we may assume that $s_i\in U_i$ for all $i\in [3]$. Letting $T_1=S_2[U_1']$, $T_2=S_2[U_2']$ and $T_3=S_2[\bigcup_{i=3}^\ell U_i']$, we see $T_1,T_2,T_3$ is a tree decomposition of $S_2$ in which the connector vertices of each $T_i$ come from the set $\{s_1,s_2,s_3,v\}$. Thus, each $T_i$ has at most $2$ connector vertices in $T$, as desired. Next, assume that $v=s_j$ for some $j\in[3]$, where we may assume $j\not =1$ and $s_1\in U_1$. Let $T_1=S_2[U_1']$ and $T_2=S_2[\bigcup_{i=1}^\ell U_i']$. It is again easy to check that $T_1,T_2$ provide a tree decomposition of $S_2$ into two trees containing at most $2$ connector vertices each, finishing the case when $S_1$ has at most $2$ connector vertices.

Assume then that $S_1$ contains more than $2$ connector vertices in $T$. Thus, $S_1$ has exactly $3$ connector vertices and $S_2$ has at most 1 connector vertex in $T$. As above, we can decompose $S_1$ into at most $3$ trees, each of which contains at most $2$ connector vertices, and  one of which will have at least $m$ and at most $6m$ vertices as $3m\leq |S_1|\leq 6m$.\end{proof}

Using Lemma~\ref{lem_endblobs0}, we now show a similar lemma where the size of 
up to $3$ of the subtrees have been roughly specified.

\begin{lemma}\label{lem_endblobs1}
	Let $T$ be a tree, and let $S$ be a subtree of $T$ containing at most $2$ connector vertices in $T$. Let $k\in 
 [3]$ and let $m_1\geq m_2\geq\ldots\geq m_k$ be positive integers. If $|S|\geq (8k-7)m_1$, then $S$ has a tree decomposition into $T_1,\dots,T_\ell$ (for some positive integer $\ell\geq k$) which each have at most $2$ connector vertices in $T$, and such that, for each  $i\in [k]$, $m_i\leq |T_i|\leq 6m_i$.
\end{lemma}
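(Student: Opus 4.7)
My plan is to proceed by induction on $k\in [3]$. The base case $k=1$ is immediate from Lemma~\ref{lem_endblobs0} applied with $m=m_1$: since $|S|\geq m_1$, it produces a decomposition of $S$ into subtrees, each having at most $2$ connector vertices in $T$, one of which (designated $T_1$) has size in $[m_1,6m_1]$.

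For the inductive step, assume the lemma holds for $k-1$ and let $|S|\geq (8k-7)m_1$. I apply Lemma~\ref{lem_endblobs0} to $S$ with $m=m_1$ to obtain a piece $T_1$ with $m_1\leq |T_1|\leq 6m_1$ together with at most $3$ further pieces $R_1,\ldots,R_q$, each a subtree with at most $2$ connector vertices in $T$. Since every vertex of $S$ lies in some piece, $\sum_i |R_i|\geq |S|-|T_1|\geq (8k-13)m_1$. If some $R_i$ satisfies $|R_i|\geq (8(k-1)-7)m_2$, I apply the inductive hypothesis to $R_i$ with the sizes $m_2\geq\cdots\geq m_k$ to extract the required pieces $T_2,\ldots,T_k$; together with $T_1$ and the remaining $R_j$ ($j\neq i$) they form the desired decomposition of $S$.

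The main obstacle is the complementary case, in which every $R_i$ satisfies $|R_i|<(8k-15)m_2$. For $k=2$ this would give $\sum_i |R_i|<q\cdot m_2\leq 3m_1$, contradicting $\sum_i |R_i|\geq 3m_1$, so the case is vacuous. For $k=3$ each $|R_i|<9m_2\leq 9m_1$ while $\sum_i |R_i|\geq 11m_1$ forces $q\geq 2$; after relabelling so $|R_1|\geq |R_2|$, a short pigeonhole argument yields $|R_1|\geq 11m_1/3>m_2$ and $|R_2|>m_1\geq m_3$. I then apply Lemma~\ref{lem_endblobs0} to $R_1$ with $m=m_2$ and to $R_2$ with $m=m_3$ to extract $T_2$ and $T_3$ of the required sizes, and all resulting pieces (the sub-pieces of $R_1$ and $R_2$, any untouched $R_j$, and $T_1$) still have at most $2$ connector vertices in $T$. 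The constant $8k-7$ is calibrated exactly so that this numerical balancing closes.
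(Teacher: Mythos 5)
Your proof is correct and follows essentially the same route as the paper's: an induction on $k$ in which Lemma~\ref{lem_endblobs0} first extracts $T_1$, after which one either recurses into a single sufficiently large remaining piece or finds two remaining pieces of sizes at least $m_2$ and $m_3$ respectively and applies Lemma~\ref{lem_endblobs0} to each, with the constant $8k-7$ calibrated so the counting closes. The only difference is organisational: where you obtain the two sufficiently large pieces by averaging over the $R_i$ in the complementary case, the paper phrases the same dichotomy as ``done if at least two of the pieces have size at least $m_2$, otherwise $|S|<(8k-7)m_1$, a contradiction.''
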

\begin{proof} We prove this by induction on $k$, where it is true for $k=1$ by Lemma~\ref{lem_endblobs0}. For $k>1$, by Lemma~\ref{lem_endblobs0}, take a tree decomposition of $S$ into subtrees $S_1,S_2,S_3,S_4$, which each have at most $2$ connector vertices in $T$, such that $m_1\leq |S_4|\leq 6m_1$. (Note that, in applications of Lemma~\ref{lem_endblobs0}, we may assume that the number of trees we get is exactly $4$, otherwise we can simply add some single vertex trees.)

First assume $k=2$. If $|S_i|\geq m_2$ for some $i\in[3]$, say $i=1$, then by induction (or Lemma~\ref{lem_endblobs0}) we can further decompose $S_1$ into some subtrees $S_{1,1},\dots,S_{1,a}$ such that $m_2\leq |S_{1,1}|\leq6m_2$, and each $S_{1,i}$ has at most $2$ connector vertices in $T$. Then these trees, together with $S_2,S_3$ and $S_4$, satisfy the conditions. So we may assume that $|S_i|< m_2$ for all $i\in [3]$. But then $|S_1|+|S_2|+|S_3|+|S_4|<3m_2+6m_1\leq 9m_1$, a contradiction to $|S|\geq (8k-7)m_1$.

If $k=3$, then a similar argument shows that we may assume $|S_i|<9m_2$ for each $i\in [3]$ (otherwise we are done by induction). We are similarly done if at least two of $S_1,S_2,S_3$ have size at least $m_2$. Indeed, if say $|S_1|,|S_2|\geq m_2$, then we can decompose $S_1$ into some subtrees with at most $2$ connector vertices each such that one such tree has between $m_2$ and $6m_2$ vertices, and similarly decompose $S_2$ into some subtrees with at most $2$ connector vertices each such that one such tree has between $m_3$ and $6m_3$ vertices, giving the claim. But if each $S_i$ ($i\in [3]$) has size at most $9m_2$, and at most one of them has size at least $m_2$, then $|S_1|+|S_2|+|S_3|<2m_2+9m_2\leq 11m_1$ and thus $|S_1|+|S_2|+|S_3|+|S_4|< 17m_1$, a contradiction  to $|S|\geq (8k-7)m_1$.
%
\end{proof}

Finally, using Lemmas~\ref{lem_endblobs0} and~\ref{lem_endblobs1}, we prove our full tree decomposition, Lemma~\ref{lemma_endblobs}.

\begin{lemma}\label{lemma_endblobs}
	Let $T$ be a tree, and let $S$ be a subtree of $T$ containing at most $2$ connector vertices in $T$. Let $k$ be a non-negative integer, and let $m_1\geq m_2\geq\ldots\geq m_k$ be positive integers. If $|S|\geq 150\sum_{i\in [k]}m_i$, then $S$ has a tree decomposition into $T_1,\dots,T_\ell$ (for some positive integer $\ell\geq k$) which each have at most $2$ connector vertices in $T$, such that, for each  $i\in [k]$, $m_i\leq |T_i|\leq 6m_i$.
\end{lemma}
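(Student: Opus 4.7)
The plan is to iteratively extract the desired pieces $T_1,\ldots,T_k$ one at a time using Lemma~\ref{lem_endblobs0}. Throughout the construction I maintain a tree decomposition of $S$ into subtrees each with at most $2$ connector vertices in $T$, split into an ``extracted'' list $T_1,\ldots,T_{i-1}$ (already satisfying $m_j\leq|T_j|\leq 6m_j$ for each $j<i$) and a ``pool'' of remaining subtrees. Starting from the pool $\{S\}$, at step $i\in[k]$ I locate the largest subtree $R$ in the pool, apply Lemma~\ref{lem_endblobs0} to $R$ with parameter $m=m_i$, designate the resulting piece of size in $[m_i,6m_i]$ as $T_i$, and return the other (at most three) pieces to the pool. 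After $k$ such iterations the extracted $T_1,\ldots,T_k$ together with the final pool give the desired tree decomposition of $S$ into some $\ell\geq k$ subtrees with all the required properties.

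The only nontrivial point is to verify that at each step $i$ the largest pool subtree has at least $m_i$ vertices, so that Lemma~\ref{lem_endblobs0} can be applied. The pool starts with one subtree and grows by at most $2$ per step (one subtree is removed and at most three are added), so at the start of step $i$ the pool has at most $2i-1$ subtrees. Since the pieces of a tree decomposition cover $V(S)$, the total size of the pool, counted with multiplicity across shared connectors, is at least $|S|-\sum_{j<i}|T_j|\geq|S|-6\sum_{j<i}m_j$. Combining the hypothesis $|S|\geq 150\sum_{j\in[k]}m_j$ with the inequality $\sum_{j<i}m_j\geq (i-1)m_i$ (which follows from the monotonicity $m_1\geq m_2\geq\ldots$), this lower bound simplifies to $(144i+6)m_i$. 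By pigeonhole, the largest pool subtree has size at least $(144i+6)m_i/(2i-1)\geq m_i$, since $144i+6\geq 2i-1$ for every $i\geq 1$.

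The only delicate point is the bookkeeping around shared connector vertices between pieces, but both of the properties that matter --- the at-most-two-connectors-in-$T$ invariant on each piece, and the lower bound on the pool's total size --- are preserved automatically: the former is exactly the output guarantee of Lemma~\ref{lem_endblobs0} at each application, and shared connectors can only inflate rather than deflate the total size counted with multiplicity, so the bound $|S|-6\sum_{j<i}m_j$ on the pool's total size holds unconditionally. Hence no real obstacle arises, and after $k$ iterations we obtain a tree decomposition with $\ell\leq 3k+1$ pieces satisfying all the conditions of the lemma.
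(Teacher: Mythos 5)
Your proposal is correct, but it proves the lemma by a genuinely different route from the paper. The paper argues by induction on $k$: it first splits $S$ via Lemma~\ref{lem_endblobs0} into four pieces $S_1,\dots,S_4$ with $25m_1\le|S_4|\le 150m_1$, then greedily assigns the indices in $[k]\setminus\{1\}$ to the pieces so as to maximise the number assigned to $S_1,S_2,S_3$ (subject to the $150\sum m_i$ budget), shows that at most two indices can be left over for $S_4$, and therefore needs the separate multi-target lemma for $k\le 3$ (Lemma~\ref{lem_endblobs1}) to extract up to three prescribed sizes from $S_4$, recursing on $S_1,S_2,S_3$. You instead run a one-piece-at-a-time greedy extraction: always apply Lemma~\ref{lem_endblobs0} to the largest subtree in the current pool. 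The point that makes this work is your counting argument — the pool has at most $2i-1$ members at step $i$ while its total size (with multiplicity at shared connectors) is at least $|S|-6\sum_{j<i}m_j\ge(144i+6)m_i$, so by pigeonhole the largest pool member always has at least $m_i$ vertices and Lemma~\ref{lem_endblobs0} applies; the at-most-two-connectors invariant is exactly the output guarantee of that lemma, and replacing a pool piece by a tree decomposition of it preserves the global tree decomposition of $S$. Your argument is shorter, bypasses Lemma~\ref{lem_endblobs1} entirely for this purpose (though that lemma is still used elsewhere in the paper), and in fact works with a considerably smaller constant than $150$; the paper's inductive bookkeeping buys nothing extra here beyond matching the structure of its other decomposition lemmas.
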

\begin{proof}
We prove this by induction on $k$. The case $k=0$ is trivial, and $k=1$ is immediate from Lemma~\ref{lem_endblobs0}, so from now on we may assume that $k\geq 2$ and the statement holds for sequences of length less than $k$. By Lemma~\ref{lem_endblobs0}, we can take a tree decomposition of $S$ into subtrees $S_1,S_2,S_3,S_4$, which each have at most $2$ connector vertices in $T$ and such that $25m_1\leq |S_4|\leq 150 m_1$. Let $I_1,I_2,I_3,I_4\subset [k]\setminus \{1\}$ be a partition which maximises $|I_1\cup I_2\cup I_3|$ subject to $150\sum_{i\in I_j}m_i\leq |S_j|$ for each $j\in [3]$.

Now, as $|S_4|\leq 150m_1$,
\[
\sum_{j\in [3]}\left(|S_j|-150\sum_{i\in I_j}m_i\right)\geq |S|-150\sum_{i\in [k]\setminus(I_4\cup \{1\})}m_i-|S_4|\geq 150\sum_{i\in I_4}m_j.
\]
Thus, $|I_4|\leq 2$ (otherwise, writing $t$ for the largest element of $I_4$, we have $\sum_{j\in [3]}\left(|S_j|-150\sum_{i\in I_j}m_i\right)\geq 3\cdot 150m_t$ and hence $|S_j|-150\sum_{i\in I_j}m_i\geq 150m_t$ for some $j\in[3]$, contradicting the definition of our partition). Therefore, as $|S_4|\geq 25m_1$, by Lemma~\ref{lem_endblobs1}, there is some $\ell_0\geq |I_4|+1$ such that we can decompose $S_4$ into $\ell_0$ subtrees with at most $2$ connector vertices each with the property that, for each $i\in I_4\cup\{1\}$, we get a different subtree with at least $m_i$ and at most $6m_i$ vertices in the decomposition. Moreover, by induction, for some $\ell_j\geq |I_j|$, we can decompose each $S_j$ ($j\in [3]$) into $\ell_j$ subtrees with at most $2$ connector vertices each such that, for each $i\in I_j$, we get a different subtree with at least $m_i$ and at most $6m_i$ vertices. Putting together these decompositions for each $S_j$ ($j\in [4]$), we get a decomposition satisfying the conditions.
\end{proof}

\subsection{Results for embedding 
trees}
In our embeddings, we will sometimes use the following common generalised form of Hall's matching criterion to attach leaves to trees.

\begin{lemma}[see, for example,~\cite{bollobas1998modern}]\label{lem:hall}
Let $H$ be a bipartite graph with vertex classes $A$ and $B$, and let $d_a\geq 0$ be an integer for each $a\in A$. Suppose that, for each $U\subset A$, $|N(U)|\geq \sum_{a\in U}d_a$. Then, there are disjoint sets $B_a$, $a\in A$, of $B$ such that, for each $a\in A$, $B_a\subset N(a)$ and $|B_a|=d_a$.
\end{lemma}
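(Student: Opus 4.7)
The plan is to reduce this to the ordinary (unweighted) form of Hall's matching theorem by a blow-up construction on the side $A$. Explicitly, I would construct an auxiliary bipartite graph $H'$ with parts $A'$ and $B$, where $A'$ contains $d_a$ distinct copies $a^{(1)},\ldots,a^{(d_a)}$ of each $a\in A$ (so $|A'|=\sum_{a\in A}d_a$), and every copy $a^{(j)}$ is joined in $H'$ to exactly the neighbours $N_H(a)\subseteq B$ of the original vertex $a$. The goal will be a matching in $H'$ that saturates $A'$; collecting, for each $a\in A$, the $B$-partners of the copies $a^{(1)},\ldots,a^{(d_a)}$ gives a set $B_a\subseteq N_H(a)$ of size $d_a$, and the disjointness of the sets $B_a$ for different $a$ is automatic from the fact that a matching uses each vertex of $B$ at most once.

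Next I would verify the ordinary Hall condition for $H'$. Given any $U'\subseteq A'$, let $U=\{a\in A: U' \text{ contains at least one copy of } a\}$; then by construction $N_{H'}(U')=N_H(U)$, and the number of copies of vertices of $U$ lying in $U'$ is at most $\sum_{a\in U}d_a$. Hence
\[
|U'|\leq \sum_{a\in U} d_a \leq |N_H(U)| = |N_{H'}(U')|,
\]
where the second inequality is exactly the hypothesis of the lemma applied to $U$. Therefore Hall's condition $|N_{H'}(U')|\geq |U'|$ holds for every $U'\subseteq A'$.

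By the standard Hall theorem, $H'$ admits a matching $M$ saturating $A'$. For each $a\in A$, define $B_a\subseteq B$ to be the set of $M$-partners of the $d_a$ copies of $a$. Then $|B_a|=d_a$, $B_a\subseteq N_H(a)$, and the sets $B_a$ are pairwise disjoint since $M$ is a matching. This is exactly the collection required by the lemma, so the proof is complete. There is really no substantive obstacle here; the only small care needed is to permit $d_a=0$, in which case $a$ simply has no copies in $A'$ and $B_a=\emptyset$, which is consistent with the construction.
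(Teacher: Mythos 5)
Your argument is correct: the blow-up of each $a\in A$ into $d_a$ copies, the verification of Hall's condition for the auxiliary graph via $N_{H'}(U')=N_H(U)$ and $|U'|\leq\sum_{a\in U}d_a$, and the read-off of the sets $B_a$ from a matching saturating $A'$ are all sound, including the degenerate case $d_a=0$. The paper does not prove this lemma but cites it as standard, and your reduction to the ordinary form of Hall's theorem is exactly the textbook proof of this generalisation, so there is nothing to add.
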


To analyse a random tree embedding, we will use Azuma's inequality for submartingales, as follows, where a sequence of random variables  $(X_i)_{i \geq 0}$ is a submartingale if $\EE(X_{i+1} \mid X_0, \dots, X_i) \geq  X_i$ for each $i\geq 0$.

\begin{lemma}[see, for example, \cite{wormald1999differential}]\label{lem:azuma}
Let $(X_i)_{i \geq 0}$ be a submartingale and let $c_i>0$ for each $i\geq 1$. If $|X_i -X_{i-1}| \leq c_i$ for each $i\geq 1$, then, for each $n\geq 1$,
\[
\PP(X_n\leq X_0- t ) \leq  \exp \left( -\frac{t^2}{2\sum_{i=1}^n c_i^2} \right).
\]
\end{lemma}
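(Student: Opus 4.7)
The plan is to prove this by the standard exponential moment (Chernoff) method, adapted to the submartingale setting. First I would introduce the increments $Y_i = X_i - X_{i-1}$ and the natural filtration $\mathcal{F}_i = \sigma(X_0,\ldots,X_i)$, so that $|Y_i|\leq c_i$ pointwise and, by the submartingale property, $\EE[Y_i\mid \mathcal{F}_{i-1}]\geq 0$. For any $\lambda>0$, Markov's inequality applied to the non-negative random variable $e^{-\lambda(X_n-X_0)}$ gives
\[
\PP(X_n\leq X_0-t)\leq e^{-\lambda t}\,\EE\!\left[e^{-\lambda(X_n-X_0)}\right].
\]

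The heart of the argument is to bound this moment generating function by $\exp(\tfrac{\lambda^2}{2}\sum_{i=1}^n c_i^2)$, which I would do by induction on $n$ using the tower law. Writing $X_n-X_0 = (X_{n-1}-X_0)+Y_n$ and conditioning on $\mathcal{F}_{n-1}$, the inductive step reduces to the claim that, for any random variable $W$ with $|W|\leq c$ and $\EE[W\mid\mathcal{F}_{n-1}]\geq 0$, one has $\EE[e^{-\lambda W}\mid \mathcal{F}_{n-1}]\leq e^{\lambda^2 c^2/2}$. I would prove this by a standard convexity argument: since $x\mapsto e^{-\lambda x}$ is convex on $[-c,c]$, the pointwise bound $e^{-\lambda W}\leq \tfrac{c-W}{2c}e^{\lambda c}+\tfrac{c+W}{2c}e^{-\lambda c}$ holds, so taking conditional expectations gives
\[
\EE[e^{-\lambda W}\mid\mathcal{F}_{n-1}]\leq \cosh(\lambda c)-\tfrac{\EE[W\mid\mathcal{F}_{n-1}]}{c}\sinh(\lambda c)\leq \cosh(\lambda c)\leq e^{\lambda^2 c^2/2},
\]
where the first inequality uses the hypothesis $\EE[W\mid\mathcal{F}_{n-1}]\geq 0$ (this is where the submartingale property enters) and the last is the familiar Taylor estimate for $\cosh$.

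Iterating this bound yields $\EE[e^{-\lambda(X_n-X_0)}]\leq \exp(\tfrac{\lambda^2}{2}\sum_{i=1}^n c_i^2)$, hence
\[
\PP(X_n\leq X_0-t)\leq \exp\!\Bigl(-\lambda t+\tfrac{\lambda^2}{2}\sum_{i=1}^n c_i^2\Bigr),
\]
and choosing $\lambda=t/\sum_{i=1}^n c_i^2$ gives the stated inequality. The only subtle point is the matching of signs: because the inequality concerns the lower tail of $X_n$, the sign of $-\lambda$ in the exponential lines up precisely with the direction of the submartingale inequality, so the $\sinh$ correction has the right sign to be discarded. As this is a classical inequality, the paper simply cites Wormald's survey rather than reproducing the argument.
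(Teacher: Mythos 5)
Your argument is correct: this is the standard Azuma--Hoeffding proof via the exponential moment method, and the convexity step is exactly where the one-sided submartingale hypothesis $\EE[W\mid\mathcal{F}_{n-1}]\geq 0$ is used to discard the $\sinh$ term with the right sign for the lower tail. The paper itself gives no proof of this lemma (it only cites Wormald's survey), so there is nothing to compare against; your write-up is a complete and correct proof of the cited statement.
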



\section{Proof of Theorem~\ref{theorem_treepacking_starlikestars} using the key lemmas}\label{section_mainembeddingscheme}
In this section we show that Theorem~\ref{theorem_treepacking_starlikestars} can be deduced from two key lemmas. 
These lemmas are a little technical, so we will first set up the situation for the proof of Theorem~\ref{theorem_treepacking_starlikestars} and introduce some notation in Section~\ref{sec_main_initial}. We then give the key lemmas in Section~\ref{sec_main_key}, relating them to the proof sketch. As these lemmas are proved in later sections, they are self-contained (i.e.\ the set-up is restated). We then continue our proof of Theorem~\ref{theorem_treepacking_starlikestars} using these key lemmas, starting with splitting the trees to be embedded into subtrees in Section~\ref{sec_main_split} and carrying out step \ref{stepA} in Section~\ref{sec_main_A}. We then set up a definition of a \emph{good sequence of embeddings} in Section~\ref{sec_good} and also show that if we find a sufficiently long good sequence of embeddings then we will have embedded all the trees required. In Section~\ref{sec_main_B} we find our initial good sequence by carrying out step~\ref{stepB}, before showing in Section~\ref{sec_main_C} that we can inductively extend until we get a good sequence of the required length by carrying out step \ref{stepC}. This completes the proof of Theorem~\ref{theorem_treepacking_starlikestars} subject to the proof of the two key lemmas, Lemma~\ref{lemma_forends} and Lemma~\ref{lemma_finalembedding}.


\subsection{Initial setup}\label{sec_main_initial}
Let $1/n\ll \eps \ll \lambda \ll 1$ and $r=\eps n$.
For convenience of notation, to prove Theorem~\ref{theorem_treepacking_starlikestars}, take an arbitrary sequence of $2r$ trees $T_1,\ldots,T_{2r}$, such that $|T_i|=n-i+1$ for each $i\in [2r]$, and $r$ of these trees are stars while the other $r$ `path-like' trees have at most $\lambda n$ leaves (extending the original sequence appropriately). We may also assume that $r$ is odd.
Let the stars be $S_1,\ldots,S_r$ and let the path-like trees be $P_1,\ldots,P_r$ (both listed in descending order of size).

Label the vertices of $K_n$ as $v_1,\ldots,v_n$, and use this to assign an ordering of the vertices from left to right. 
Thus, for example, given a vertex $v_i$ and a tree (or any subgraph of $K_n$) containing $v_i$, by \emph{rightward neighbours} 
we mean all vertices $v_j$ adjacent to $v_i$ with $j>i$.
For each $i\in [r]$, we will use $v_i$ as the centre for the embedding of $S_i$, and we set $W=\{v_1,\ldots,v_r\}$ to be the set of these vertices. Let $X=\{v_{r+1},\ldots,v_n\}$ be the set of vertices not used as the centre of these stars, and, for each $i\in [r]$, let
\begin{equation}\label{eqn:widefn}
W_i=\{v_j:j\in [r],|P_i|>|S_j|\}.
\end{equation}
For each $i\in [r]$, we will embed $P_i$ into $W_i\cup X$. Note that, as in \eqref{eqn:WiXsketch}, we have
\begin{equation}\label{eqn:wix}
|W_i\cup X|=|P_i|+i-1,
\end{equation}
giving us a version of \ref{prop:embeddingprinciple}.

To help with certain steps in the proof (such as connecting up the ends/artificial ends described in Section~\ref{sec:sketch}), take disjoint sets $A_1,\ldots,A_r$, each of 20 vertices, in $X$. For each $i\in [r]$, the edges between $\{v_1,\ldots,v_r\}$ and $A_i$ will only be used in the embedding of $P_i$, as we might need this set $A_i$ to embed vertices of $P_i$ which need particularly many neighbours embedded in $W_i$. 
Let
\[
A=A_1\cup \ldots \cup A_r,
\label{eqn:A}
\]
and, for convenience, assume this is the set of the $20r$ rightmost vertices of $K_n$.
We will use $H$ and $G$ (with appropriate subscripts) to denote the graphs of the embedded edges and the remaining edges, respectively, at various points in our embedding process.




\subsection{The key lemmas}\label{sec_main_key}
Our first key lemma will be used in step~\ref{stepC} of the embedding (as per the proof sketch) to embed the ends of the path-like trees (only one end of $P_i$ will be embedded this way if $i$ is even, as the other end is embedded in step~\ref{stepA} by a simpler method). In the statement of Lemma~\ref{lemma_forends} below, the tree $T$ will correspond to one of these ends in each application, and $T$ will have a special vertex $t$ where the end connects to the rest of the path-like tree. Within the set up in Section~\ref{sec_main_initial}, the end $T$ is to be embedded within $W^-\cup X$ for some specified subset $W^-$ of $W$. 
Furthermore, $T$ will be embedded with the connector $t$ embedded to some specified vertex $a\in A^-$, where in the application $A^-$ will be the unused vertices in $A_i$, the set put aside in $X$ to help make connections for $P_i$. Finally, while embedding $T$, in the applications we want to avoid overlapping with vertices used to embed other parts of the tree $P_i$, as well as any vertices in $X$ which have already had many neighbouring edges used in the embeddings so far. In Lemma~\ref{lemma_forends}, these `forbidden' vertices are represented by the set $X^{\mathrm{forb}}$.

\begin{restatable}{lemma}{lemmaforends}\label{lemma_forends}
Let $1/n\ll \eps
\ll 1$ and $r=\eps n$. Let $v_1,\ldots,v_n$ be an ordering of the vertices of $K_n$ and let $W=\{v_1,\ldots,v_r\}$ and $X=\{v_{r+1},\ldots,v_{n}\}$. Let $H\subset K_n$ have $\Delta(H)\leq n/50$ and $e(H)\leq \eps n^2$, and let $G$ be the complement of $H$.

Let $W^-\subset W$ and $A^-\subset X$ be such that $|A^-|\geq 2$ and there are no edges inside the set $W^-\cup A^-$ in $H$. 
Let $X^{\mathrm{forb}}\subset X\setminus A^-$ satisfy $|X^{\mathrm{forb}}|\leq n/100$. Let $T$ be a tree with $|W^-|<|T|\leq 4\eps n$ and let $t\in V(T)$.

Then, for every $a\in A^-$, there exists an embedding $\phi$ of $T$ in $G$ with $\phi(t)=a$ such that $W^-\subseteq \operatorname{Im}(\phi)\subset W^-\cup (X\setminus X^{\mathrm{forb}})$ and every vertex in $W^-$ has at most 1 rightward neighbour in $\phi(T)$ in the ordering $v_1,\ldots,v_n$.
\end{restatable}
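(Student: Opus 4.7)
The plan is to embed $T$ in two phases after decomposing it (rooted at $t$) into a connected ``core'' subtree $T_0$ containing $t$ of size $|T|-|W^-|$, together with the components $T_1,\dots,T_k$ of $T-V(T_0)$, which are pendant subtrees each attached to $T_0$ along a unique edge $p_ir_i$ with $p_i\in V(T_0)$, $r_i\in V(T_i)$, and $\sum_i|T_i|=|W^-|$; taking $V(T_0)$ to be the first $|T|-|W^-|$ vertices of a DFS preorder from $t$ makes $T_0$ connected automatically. I will embed $T_0$ into $X\setminus X^{\mathrm{forb}}$ in Phase~1 and each $T_i$ into $W^-$ in Phase~2.

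In Phase~1, a greedy DFS-order embedding of $T_0$ starting with $\phi(t)=a$ works because at every step the current parent's image has at most $\Delta(H)\le n/50$ non-neighbours in $G$, $|X^{\mathrm{forb}}|\le n/100$, and at most $|T|-1\le 4\varepsilon n$ positions are already used, so there remain at least $9n/10$ valid targets in $X\setminus X^{\mathrm{forb}}$. In addition, for each $v\in V(T_0)$, letting $k_v$ denote the number of pendants attached to it (that is, indices $i$ with $p_i=v$), I require $|N_G(\phi(v))\cap W^-|\ge k_v$; this is automatic for generic $v\in X$ by density whenever $k_v$ is not too close to $|W^-|$, and in the exceptional cases where it fails, I instead place $\phi(v)$ inside $A^-$, which automatically yields $W^-\subseteq N_G(\phi(v))$ via the completeness of $G[W^-\cup A^-]$. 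The pigeonhole identity $\sum_v k_v=k\le|W^-|$ bounds the number of fallback placements so that they all fit inside $A^-\setminus\{a\}$.

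In Phase~2, I partition $W^-$ in its left-to-right ordering $w_1<\dots<w_{|W^-|}$ into blocks $R_1,\dots,R_k$ of sizes $|T_i|$ so that each $R_i$'s rightmost element $m_i$ lies in $N_G(\phi(p_i))$; this is a Hall-type matching made feasible by the Phase~1 guarantees. Then embedding each $T_i$ into $R_i$ by taking a DFS of $T_i$ from $r_i$ and assigning vertices in reverse DFS order to $R_i$ from right to left ensures $\phi(r_i)=m_i$ and every non-root $T_i$-vertex lies strictly to the left of its $T_i$-parent. Completeness of $G[W^-]$ makes all internal $T_i$-edges automatic, and each cross edge $\phi(p_i)m_i$ is in $G$ by construction. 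The rightward-neighbour condition on $W^-$ then follows: each $w\in W^-$ has exactly one rightward $\phi(T)$-neighbour, namely $\phi(p_i)\in X$ if $w=m_i$ and otherwise the image of $w$'s $T_i$-parent inside $R_i$.

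The main obstacle is the coordination between the two phases: one must arrange Phase~1 so that every $p_i$ has enough $G$-neighbours among the Phase~2 block-maxima $m_i$ in $W^-$. The density of $G$ handles most cases, the reserve $A^-$ absorbs the exceptional high-degree $p_i$'s, and the interplay requires extra care in the regime where $|W^-|$ is small compared with $\Delta(H)$.
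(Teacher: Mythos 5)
Your overall architecture (a core containing $t$ embedded into $X$, plus pendant pieces totalling $|W^-|$ vertices dropped into $W^-$ via a Hall-type matching, with $A^-$ as a reserve of vertices that see all of $W^-$) matches the paper's in outline, but two quantitative ingredients are missing and the argument breaks without them. First, the claim that $|N_G(\phi(v))\cap W^-|\geq k_v$ is ``automatic for generic $v\in X$ by density'' is false: you are always in the regime you defer as needing ``extra care'', since $|W^-|\leq r=\eps n$ while $\Delta(H)\leq n/50$ and $\eps\ll 1$, so a single vertex of $X$ may be $H$-adjacent to \emph{every} vertex of $W^-$ and hence have no $G$-neighbour there at all. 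The paper handles this with an edge count, not a degree bound: since $e_H(W^-,X)\leq |W^-|\cdot n/50$, at most $n/25$ vertices of $X$ have more than $|W^-|/2$ $H$-neighbours in $W^-$, and the core is embedded only into the complementary set $X_0$. You need this counting step, and it only buys you that each $\phi(p_i)$ sees \emph{half} of $W^-$, not the per-vertex guarantee you assume.

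Second, and more seriously, even granting that every $\phi(p_i)$ sees half of $W^-$, your Phase~2 matching can fail and your fallback never triggers. Take $T_0$ so that the pendants $T_1,\dots,T_k$ are $k=|W^-|$ singletons attached to distinct vertices $p_1,\dots,p_k$: then $k_{p_i}=1$ for all $i$, your condition $|N_G(\phi(p_i))\cap W^-|\geq k_{p_i}$ holds for every $p_i\in X_0$, so nothing is sent to $A^-$; yet covering all of $W^-$ requires $\bigl|\bigcup_i N_G(\phi(p_i))\cap W^-\bigr|=|W^-|$, which is not implied by each term having size $\geq |W^-|/2$ (all the $\phi(p_i)$ could miss the same vertex $w^*$). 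The paper's decomposition (Lemma~\ref{lemma_genuineenddivision}) is designed precisely to avoid this: it produces one piece with at least $|W^-|/2$ non-connector vertices whose connector is deliberately embedded into $A^-$, so that every Hall set $I$ with $\sum_{i\in I}d_i>|W^-|/2$ contains that piece and its image sees all of $W^-$ (using that $H$ has no edges inside $W^-\cup A^-$). Your DFS-prefix decomposition gives no control over pendant sizes, so you cannot reserve such a dominant piece; also note $|A^-|\geq 2$ only, so at most one fallback vertex fits besides $a$. A further, smaller issue: insisting on \emph{consecutive} blocks $R_1,\dots,R_k$ of prescribed but varying sizes makes the break points depend on the order in which the trees are assigned, so this is not a plain application of Hall's theorem; the paper sidesteps this by allowing arbitrary (non-consecutive) blocks $W_i^-\subseteq N_G(\phi_0(s_i))$ and embedding each $T_i$ order-preservingly with its connector image in $X$, which is automatically to the right of all of $W$.
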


The second, and more difficult, key lemma (Lemma~\ref{lemma_finalembedding}) is also used in step \ref{stepC} of our embedding. For each even $i\in [r]$, after the ends of $P_i$ or $P_{i+1}$ are embedded, this lemma will embed the rest of $P_i$ or $P_{i+1}$ appropriately.
In Lemma~\ref{lemma_finalembedding}, $T$ represents this remaining portion of $P_i$ or $P_{i+1}$ to be embedded. The slightly more complicated case, which requires the full strength of Lemma~\ref{lemma_finalembedding}, is the embedding of $P_{i+1}$. To relate this further to the proof sketch, this final part of the embedding of $P_{i+1}$ is to use vertices in the disjoint sets $W_{i,j}$, $j\in [i]$, covering each of these sets except for the leftmost vertices -- these appear in the lemma as sets $W^-_j$, $j\in [k]$ (which is then applied with $k=i$).
As in the proof sketch (and in particular in Figure~\ref{fig:turnaround1}), the graph $H$ of edges embedded at this stage has the property that $H[W_{i,j}]$ is 2-colourable (with only edges of $P_i$ embedded within $W_{i,j}$), and the leftmost vertex of $W_j^-$ (i.e., the leftmost vertex of $W_{i,j}$ that we want to cover) has no neighbouring edges within $W_{i,j}$ used in the embedding (as it is not in the image of $P_i$).
This property is represented in the lemma as \itref{cond_finalembedlemma2}. Some vertices in $X$ will have been used for the ends of $P_{i+1}$, so for embedding the remaining portion $T$, we will only use some subset $X^-\subset X$ avoiding these vertices. The embedding of $T$ in the application must agree with the embedding of the ends of $P_{i+1}$ where they overlap, i.e., at the connector vertices.
As discussed before the statement of Lemma~\ref{lemma_forends}, these connector vertices will be embedded into the reserved set $A_{i+1}$. We will also need some other vertices from $A_{i+1}$ to assist in the embedding of $T$ -- together these vertices will be the set $A^-$ in the lemma. Condition~\itref{cond_finalembedlemma1} reflects the property of these assisting vertices: no edges have been used between them and the set $W^-$ we are trying to cover with the embedding of $T$. The final condition~\itref{cond_finalembedlemma3} will come from the fact that each vertex of $W$ will have at most $\lambda n$ rightward edges used in previous embeddings.

Given, then, the embedding of up to 2 connector vertices (the set $U$ in the conclusion of Lemma~\ref{lemma_finalembedding}, coming from the two ends) in $A^-$, $T$ is embedded extending this to cover exactly $W^-\cup X^-$, so that every vertex in $W^-$ (except for the turnaround vertices) has at most 1 rightward neighbour in the embedding~(\itref{prop:S1}), so that furthermore the maximum degree of the total embedded edges has not increased too much (\itref{prop:S2}) and so that there have not been any edges used in the embedding of $T$ between $W$ and the vertices set aside for assisting making connections in other embeddings (\itref{prop:S3}).

  \setcounter{restatedpropcounter}{\value{propcounter}}

\begin{restatable}{lemma}{lemmafinalembedding}\label{lemma_finalembedding} Let $1/n\ll \eps\ll \lambda \ll 1$ and $r=\eps n$.
Let $v_1,\ldots,v_n$ be an ordering of the vertices of $K_n$ and let $W=\{v_1,\ldots,v_r\}$ and $X=\{v_{r+1},\ldots,v_{n}\}$. Let $G\subset K_n$ have $\delta(G)\geq \frac{49n}{50}$, and let $H$ be the complement of $G$ in $K_n$. Suppose that $e(H)\leq \eps n^2$.

Let $k\in [r]$. Let $T$ be a tree with at most $\lambda n$ leaves and with $\frac{99}{100}n\leq |T|\leq n$. Let $W^-\subset W$ and $X^-\subset X$ such that $|W^-\cup X^-|=|T|$. Let $A^-\subset X^-$ satisfy $|A^-|=16$ and let $A\subset X$ satisfy $|A|\leq 20r$. Let $W_{1}^-\cup \ldots\cup W_{k}^-$ be a partition of $W^-$. Suppose the following hold.

\stepcounter{propcounter}
\begin{enumerate}[label = \textbf{\emph{\Alph{propcounter}\arabic{enumi}}}]
\item There are no edges between $W^-$ and $A^-$, nor inside the set $A^-$, in $H$.\label{cond_finalembedlemma1}
\item For each $j\in [k]$, $H[W_{j}^-]$ is $2$-colourable and the leftmost vertex of $W_j^-$ is an isolated vertex in this subgraph (or $W_j^- =\emptyset$).\label{cond_finalembedlemma2}
\item Each vertex in $W^-$ has at most $2\lambda n$ neighbouring edges in $H$.
\label{cond_finalembedlemma3}
\end{enumerate}

  Then, for any $U\subset V(T)$ with $|U|\leq 2$, any embedding of $T[U]$ in $G[A^-]$ extends to an embedding of $T$ in $G[W^-\cup X^-]$, so that, if $S$ is the embedded copy of $T$, then the following conditions are satisfied.
  \stepcounter{propcounter}
  \begin{enumerate}[label = \textbf{\emph{\Alph{propcounter}\arabic{enumi}}}]
  \item Each vertex of $S$ in $W^-$ has at most $1$ rightward neighbour in $S$ unless it is the leftmost vertex in a set in the partition $W^-_{1},\ldots,W^-_{k}$, in which case it has at most $2$ rightward neighbours in $S$.\label{prop:S1}
		\item Every vertex in $V(G)\setminus A^-$ with more than $\frac{n}{200}$ neighbours in $X$ in $H$, as well as every vertex in $A\setminus A^-$, has at most $2$ neighbours in $X$ in $S$.\label{prop:S2}
\item $S$ has no edges between $W^-$ and $A\setminus A^-$, nor inside $A\setminus A^-$.\label{prop:S3}
    \end{enumerate}
\end{restatable}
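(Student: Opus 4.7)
The plan is to decompose $T$ using Lemma~\ref{lemma_endblobs} into subtrees $T_1,\ldots,T_\ell$ with $\ell \geq k$, each with at most two connector vertices in $T$, where $|T_j|$ is chosen slightly larger than $|W_j^-|$ (by a constant factor) for each $j\in[k]$. This is feasible because $|T|\geq 99n/100 \geq 150|W^-|$, since $|W^-|\leq r=\eps n$ is tiny. Subtrees $T_1,\ldots,T_k$ will serve as \emph{artificial ends}, each covering one $W_j^-$ together with a small number of vertices in $X^-$, while $T_{k+1},\ldots,T_\ell$ form the central body of $T$ to be embedded entirely into $X^-$.

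I would first extend the given pre-embedding of $U$ in $A^-$ to a complete embedding of the central body into $X^-$, reserving anchor vertices in $X^-$ to which the artificial ends will later attach. Let $X^{\mathrm{bad}}$ be the union of $A\setminus A^-$ and the set of $x\in V(G)\setminus A^-$ with more than $n/200$ $H$-neighbours in $X$; double-counting against $e(H)\leq \eps n^2$ gives $|X^{\mathrm{bad}}|=O(\eps n)\ll \lambda n$. Since $T$ has at most $\lambda n$ leaves and hence at most $\lambda n$ branching vertices, a greedy BFS embedding (which is possible because $\delta(G)\geq 49n/50$ gives $G$-degrees close to $n$ inside $X^-$, and because Lemma~\ref{lem:hall} lets us attach the many bare-path and leaf pieces) can route all degree-$\geq 3$ vertices of $T$ away from $X^{\mathrm{bad}}$. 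Condition~\itref{prop:S3} comes for free from this routing, since any edge inside or adjacent to $A\setminus A^-\subseteq X^{\mathrm{bad}}$ is avoided throughout.

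Next, for each $j\in[k]$ I would use~\itref{cond_finalembedlemma2} to fix a proper $2$-colouring $W_j^- = A_j^*\sqcup B_j^*$ of $H[W_j^-]$ with the leftmost vertex $w_j$ lying in $A_j^*$ and $H$-isolated in $W_j^-$; then $G[A_j^*,B_j^*]$ is complete bipartite. Identify a bare-path ``spine'' in $T_j$ from its connector and embed it walking leftward through $W_j^-$, alternating between $A_j^*$ and $B_j^*$, until reaching $w_j$; there, because $w_j$ is $H$-isolated in $W_j^-$, two fresh rightward $G$-neighbours inside $W_j^-$ are available, giving the turnaround. The spine then continues rightward along the alternation, and the remaining branches of $T_j$ attach at the spine endpoints or at $w_j$ and go out to unused vertices in $X^-$ via Lemma~\ref{lem:hall}; condition~\itref{cond_finalembedlemma3} ($d_H(w,V(G))\leq 2\lambda n$ for each $w\in W^-$) guarantees $n-O(\lambda n)$ available $G$-neighbours for each outgoing edge.

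The main obstacle is the case where some $T_j$ is more complex than a path (as in Figure~\ref{fig:morecomplicated}): then the spine covers only part of $T_j$, and the extra leaves/branches demand additional rightward edges out of interior vertices of $W_j^-$, threatening condition~\itref{prop:S1}. Resolving this requires pushing all branches to attach either at $w_j$ or at the two spine endpoints (which lie in $X^-$), which in turn constrains the possible tree shape of $T_j$. One must therefore choose the target sizes in the decomposition carefully, so that each $T_j$ has a bare path of length at least $|W_j^-|-O(1)$ containing the connectors at its ends; since $T$ has at most $\lambda n$ leaves in total while $\sum_j|W_j^-|\leq\eps n\ll \lambda n$, a pigeonhole argument on bare-path locations within the decomposed $T_j$ should suffice, though reconciling this tree-structural demand with the independent budgeting in $X^-$ across all $k$ artificial ends is the delicate bookkeeping step.
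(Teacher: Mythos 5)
Your outline correctly identifies the crux of the lemma --- what to do when an ``artificial end'' $T_j$ is not a path --- but the fix you propose for it does not work, and it is precisely at this point that the paper's proof does something different that your proposal is missing. You suggest choosing the decomposition so that each $T_j$ contains a bare path of length at least $|W_j^-|-O(1)$ with the connectors at its ends, justified by a pigeonhole argument using $\sum_j|W_j^-|\leq \eps n\ll\lambda n$. No such decomposition exists in general: take $T$ to be a caterpillar in which a leaf hangs off every $\lceil 1/\lambda\rceil$-th spine vertex. This tree has about $\lambda n$ leaves, so it is admissible, but its longest bare path has constant length $O(1/\lambda)$, whereas $|W_j^-|$ can be as large as $r=\eps n$ (linear in $n$). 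Comparing total leaf count to $\sum_j|W_j^-|$ says nothing about where the leaves sit; they can be spread uniformly so that \emph{every} subtree of size $\Theta(|W_j^-|)$ contains $\Theta(\lambda|W_j^-|)$ leaves and no long bare path. So the structural demand you place on the $T_j$ cannot be met, and with it the argument for \itref{prop:S1} collapses for non-path-like ends.

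The paper's resolution is to relax rather than strengthen the requirement on each end: Lemma~\ref{lemma_eitherend} embeds $T_j$ into $W_j^-\cup X^-$ with one rightward edge per covered vertex (two at the turnaround), but is permitted to \emph{miss} up to $\ell_j/100$ vertices of $W_j^-$, where $\ell_j$ is the number of non-connector leaves of $T_j$ --- the more branching the end, the more gaps it may leave. These gaps, at most $\ell/100$ in total, are then filled by a separate large subtree $F_2$ of $T$ that has at least $\ell$ leaves of its own, embedded by the probabilistic Hall-type argument of Lemma~\ref{lemma_gapfilling} so that its leaves land exactly on the uncovered vertices of $W^-$ (each such vertex then acquires only one new edge, preserving \itref{prop:S1}). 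This two-stage ``leave gaps proportional to leaves, then cover the gaps with leaves'' mechanism is absent from your proposal and is not replaceable by the bare-path routing you describe. A secondary, smaller issue: for \itref{prop:S2} it is not enough to route branching vertices away from the high-$H$-degree vertices; since the embedding must cover $W^-\cup X^-$ exactly, those vertices must actually be \emph{covered} by vertices of $T$ with at most two neighbours in $X$, which the paper arranges by saving a large piece $F_3$ for last and applying Lemma~\ref{lemma_fillingwithbarepaths} (via Krivelevich's bare-path lemma) to place an independent set of degree-$2$ vertices onto the forbidden set; embedding the central body first, as you propose, makes this step awkward because the leftover set is not known until the end.
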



\subsection{Splitting the path-like trees}\label{sec_main_split}
Using~\cref{lemma_divisionavoidingavertex} (and $|P_1|\geq n-r)$, take a tree decomposition of $P_1$ into $E_{1,1}$ and $E_{1,2}$ so that $\frac{n}{3}\leq |E_{1,1}|\leq \frac{2n}{3}$, and note that $\frac{n}{4}\leq |E_{1,2}|\leq \frac{3n}{4}$. In step~\ref{stepB}, we will embed $P_1$ as the union of these two ends. Let $t_{1,1}$ be the connector vertex between $E_{1,1}$ and $E_{1,2}$.

For each even $i\in [r]$ let $W_{i,0}=W_i\setminus W_{i+1}$ (recall that the sets $W_j$ are defined in~\eqref{eqn:widefn}, and that these sets $W_{i,0}$ will be covered in step~\ref{stepA}). Note that the sets $W_{i,0}$, for even $i\in [r]$, are disjoint and in $W$, so that
\begin{equation}\label{eq:Wsum}
\sum_{\text{even }i\in [r]}|W_{i,0}|\leq |W|=r.
\end{equation}
Using~\cref{corollary_decomp_three}, for each even $i\in [r]$, decompose $P_i$ into trees $E_{i,1}$, $E_{i,2}$, and $F_i$, where
\begin{equation}\label{eqn:Wi0}
|W_{i,0}|+1< |E_{i,1}|\leq 2|W_{i,0}|+2\;\;\text{ and }\;\;  2\varepsilon n< |E_{i,2}|\leq 4\varepsilon n,
\end{equation}
and $E_{i,1}$ and $E_{i,2}$ both contain at most one connector. We think of the sets $E_{i,1},E_{i,2}$ as the two ends, the first of which is used to cover $W_{i,0}$ in step~\ref{stepA}, and the second of which is used in step~\ref{stepC}.

Using~\cref{corollary_decomp_three} again, for each odd $i\in [r]$ with $i\geq 3$ decompose $P_i$ into trees $E_{i,1}$, $E_{i,2}$, and $F_i$, where \begin{equation}
    \label{eqn:Wi0_2}
2\varepsilon n< |E_{i,1}|,|E_{i,2}|\leq 4\varepsilon n
\end{equation}
and $E_{i,1}$ and $E_{i,2}$ both contain at most one connector vertex. Again, we think of $E_{i,1}, E_{i,2}$ as the two ends, but for odd $i\geq 3$ both of these ends will be used in step~\ref{stepC} only.

For each $2\leq i\leq r$, let $t_{i,1}$ be the connector vertex in $E_{i,1}$ and let $t_{i,2}$ be the connector vertex in $E_{i,2}$ (noting that we may have $t_{i,1}=t_{i,2}$).


\subsection{Step~\ref{stepA}: Embedding an end of $P_i$ for each even $i\in [r]$.}\label{sec_main_A}
For each even $i\in [r]$, we will embed $E_{i,1}$ into $K_n[W_{i,0}\cup X]$ with an embedding $\psi_i$, while covering $W_{i,0}$, as follows. For each even $i\in [r]$, pick $a_{i,1}\in A_i$ arbitrarily. Now, noting that
\[
|X\setminus A|=n-21r\geq 4r\overset{\eqref{eq:Wsum}}{\geq} 2\sum_{\text{even }i\in [r]}|W_{i,0}|+2r
\overset{\eqref{eqn:Wi0}}{\geq}\sum_{\text{even }i\in [r]}|E_{i,1}|,
\]
take disjoint sets $U_i$, for even $i\in [r]$, in $X\setminus A$, such that
\[
|U_i|=|E_{i,1}|-|W_{i,0}|-1\overset{\eqref{eqn:Wi0}}\geq 0,
\]
for each even $i\in [r]$. For each even $i\in [r]$, using that $E_{i,1}$ is a tree, 
order the vertices of $E_{i,1}$ so that each vertex has at most $1$ rightward neighbour in $E_{i,1}$ and so that $t_{i,1}$ is the rightmost vertex in this ordering.
Using this ordering of $V(E_{i,1})$ and the ordering of $V(K_n)$, embed $E_{i,1}$ into $K_n[W_{i,0}\cup U_i\cup \{a_{i,1}\}]$ in an order-preserving way, calling the resulting embedding $\psi_i:E_{i,1}\to K_n$. Recall that the vertices of $A$ were the $20r$ rightmost vertices of $V(K_n)$, and therefore we have set $\psi_i(t_{i,1})=a_{i,1}$. We record the following properties for convenience.



\medskip

\noindent\textbf{Embeddings $\psi_i$:} For each even $i\in [r]$, we have an embedding $\psi_i:E_{i,1}\to K_n$, such that these embeddings are vertex-disjoint and, letting $H_0$ be the union of $\psi_i(E_{i,1})$ across all even $i\in [r]$, the following hold.
\stepcounter{propcounter}
\begin{enumerate}[label = \textbf{{\Alph{propcounter}\arabic{enumi}}}]
\item Every vertex in $W=\{v_1,\ldots,v_r\}$ has at most one rightward neighbour in $H_0$.\label{prop_psi_1}
\item For each even $i\in[r]$, $W_{i,0}\subset \operatorname{Im}(\psi_i)\subset W_{i,0}\cup X$.\label{prop_psi_2}
\item For each even $i\in [r]$, $\operatorname{Im}(\psi_i)$ has exactly one vertex in $A$, which is $a_{i,1}=\psi_i(t_{i,1})$.\label{prop_psi_3}
\end{enumerate}


\subsection{Good sequences of embeddings}\label{sec_good}
As in the proof sketch, we will now embed $P_1$ (step \ref{stepB}), before embedding the rest of $P_i$ and the whole of $P_{i+1}$ together for each even $i\in [r]$ (step \ref{stepC}). We prove that the embedding can be done by induction, for which the following definition recording the successful embedding of $P_1,\ldots,P_\ell$ for some odd $\ell$ will be useful. Note that the properties below essentially correspond to rules~\ref{prop:rule1} and~\ref{prop:rule2}, with some additional mild assumptions.

\begin{restatable}{definition}{goodsequence}
	Let $\ell\in [r]$ be odd and let $\phi_1,\dots,\phi_\ell$ be embeddings of $P_1,\dots,P_\ell$, respectively, in $K_n$.  We say that $\phi_1,\dots,\phi_\ell$ form a \emph{good sequence of length $\ell$} if the following properties are satisfied, where $H$ is the subgraph of $K_n$ whose edges are the edges of the trees embedded by $\phi_1,\dots,\phi_\ell,\psi_{\ell+1},\psi_{\ell+3},\dots,\psi_{r-1}$.
  \stepcounter{propcounter}
  \begin{enumerate}[label = \textbf{{\Alph{propcounter}\arabic{enumi}}}]
		\item The trees embedded by $\phi_1,\dots,\phi_\ell,\psi_{\ell+1},\psi_{\ell+3},\dots,\psi_{r-1}$ are edge-disjoint.\label{property_disjoint}
    \item For each $j\in [\ell]$,  $\phi_j$ embeds $P_j$ into $W_j\cup X$.\label{property_image}
		\item For all even $j\leq \ell$, $\phi_j$ extends $\psi_j$.\label{property_extend}
    \item Each vertex in $W=\{v_1,\ldots,v_r\}$ has at most $1$ rightward neighbour in $\phi_1(P_1)$. \label{property_forwardedge_new1}
  		\item Each vertex in $W=\{v_1,\ldots,v_r\}$ has at most $2$ rightward neighbours in $\phi_j(P_j)\cup \phi_{j+1}(P_{j+1})$ for each even $j\in [\ell]$.\label{property_forwardedge_new2}
		\item Every vertex has at most $\frac{n}{100}+2\ell$ neighbours in $H$ in $X$, and each vertex in $\bigcup_{j\in[r]\setminus [\ell]}A_j$ has at most $\frac{n}{200}+2\ell$ neighbours in $H$ in $X$.\label{property_mindegree}
		\item For each $j\in [r]\setminus [\ell]$, there is no edge between $W$ and $A_j$, or within $A_j$, in $\bigcup_{i\in [\ell]}\phi(P_{i})$.\label{property_A}
	\end{enumerate}
\end{restatable}

If we are able to construct a good sequence of length $r$, then we will be able to easily embed the stars $S_1,\ldots,S_r$ disjointly from the paths $P_1,\ldots,P_r$. Indeed, suppose we have a good sequence $\phi_1,\ldots,\phi_r$ of length $r$ and let $G$ be the edges of $K_n$ not in $H:=\bigcup_{j\in [r]}\phi_j(P_j)$. Letting $i\in [r]$, we now show that $v_i$ has at least $|S_i|-1$ rightward neighbours in $G$.

 If $v_i\notin W_j$ for each $j\in [r]$, then $v_i$ is in no embedding $\phi_j(P_j)$, $j\in [r]$, by \ref{property_image}, and therefore $v_i$ has $n-i=|S_i|-1$ rightward neighbours in $G$. Otherwise, let $j\in [r]$ be the largest such $j$ with $v_i\in W_j$. If $j$ is odd, then, by \ref{property_forwardedge_new1} and \ref{property_forwardedge_new2}, $v_i$ has at least $n-i-j$ rightward neighbours in $G$, while if $j$ is even, then by \ref{property_forwardedge_new1}, \ref{property_forwardedge_new2}, \ref{prop_psi_1} and \ref{property_extend}, $v_i$ also has at least $n-i-j$ rightward neighbours in $G$. Then, as
\[
n-i-j\overset{\eqref{eqn:widefn}}=n-1-|\{i'\in [r]:|S_i|<|S_{i'}|\}|-|\{j'\in [r]:|S_i|<|P_{j'}|\}|=|S_i|-1,
\]
$v_i$ has at least $|S_i|-1$ rightward neighbours in $G$. Thus, for every $i\in [r]$, $v_i$ has at least $|S_i|-1$ rightward neighbours in $G$, so that the stars $S_1,\ldots,S_r$ can be embedded disjointly into $G$, which from the definition of $G$, \ref{property_disjoint} and \ref{property_image}, shows that $P_1,\ldots,P_r,S_1,\ldots,S_r$ can be embedded disjointly into $K_n$, where each star is embedded with the centre to the left of all its leaves. To complete the proof of Theorem~\ref{theorem_treepacking_starlikestars} (subject to the proof of the two key lemmas), we note that every vertex which is not the centre of an embedded star (i.e., every vertex in $X$) has, by \ref{property_mindegree}, degree at most $r+\frac{n}{100}+2r\leq \frac{n}{10}$ in the embedding, as required.

It is left then to show that a good sequence of length 1 exists, which we do in Section~\ref{sec_main_B}, and to inductively extend this good sequence by completing the next two embeddings, which we do in Section~\ref{sec_main_C}.


\subsection{Step~\ref{stepB}: Embedding $P_1$}\label{sec_main_B}
The following lemma shows that a good sequence of length 1 does exist.
\begin{lemma}\label{lemma_T1}
	There is an embedding $\phi_1$ of $P_1$ such that $\phi_1$ forms a good sequence of length $1$.
\end{lemma}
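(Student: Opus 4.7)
The plan is to split $P_1 = E_{1,1} \cup E_{1,2}$ at the connector $t_{1,1}$ and to embed each end to cover one colour class of a $2$-colouring of $W_1$ with respect to $H_0$. The starting observation is that, by property~\ref{prop_psi_1}, every vertex of $W$ has at most one rightward neighbour in $H_0$; orienting each edge of $H_0[W]$ towards its unique rightward endpoint shows $H_0[W]$ is a forest, so in particular $H_0[W_1]$ is $2$-colourable. Fix a partition $W_1 = W_{1,1} \cup W_{1,2}$ into sets that are independent in $H_0$, pick a reserved vertex $x_0 \in X \setminus A$ for the image of $t_{1,1}$, and partition $X \setminus \{x_0\} = X_1 \cup X_2$ with $|X_j| = |E_{1,j}| - |W_{1,j}| - 1$; the counts match by~\eqref{eqn:wix} and the fact that $|E_{1,1}| + |E_{1,2}| = |P_1| + 1$.

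Next, for each $j \in \{1,2\}$ I would enumerate $V(E_{1,j})$ as $u_1, \ldots, u_N$ with $u_N = t_{1,1}$ and each earlier $u_k$ having exactly one tree-neighbour with a later index (its parent in the tree rooted at $t_{1,1}$). Set $V_j := W_{1,j} \cup X_j \cup \{x_0\}$, noting $|V_j| = |E_{1,j}|$. Process $E_{1,j}$ in the reverse ordering $u_N, u_{N-1}, \ldots, u_1$: set $\phi_1(u_N) = \phi_1(t_{1,1}) = x_0$, and for each subsequent $u_k$ place $\phi_1(u_k)$ at some yet-unused vertex of $V_j$ that lies strictly to the left of $\phi_1(p(u_k))$, where $p(u_k)$ is the unique later tree-neighbour of $u_k$. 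Since $V_1 \cup V_2 = W_1 \cup X$, this gives~\ref{property_image}, and because each $u_k$ has only one later tree-neighbour, each image vertex has at most one rightward neighbour in $\phi_1(E_{1,j})$; disjointness of $W_{1,1}$ and $W_{1,2}$ then delivers~\ref{property_forwardedge_new1}.

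The main obstacle is carrying out the greedy step so as to avoid (i) edges already in $H_0$ (for~\ref{property_disjoint}) and (ii) edges between $W$ and $A_{j'}$ or inside $A_{j'}$ for $j' \geq 2$ (for~\ref{property_A}), while still covering $W_{1,j}$ completely. Independence of $W_{1,j}$ in $H_0$ ensures no $H_0$-edge ever lies inside $W_{1,j}$, so every forbidden edge has an endpoint in $X$; there are $O(\eps n)$ such edges in total, against roughly $n$ candidate positions at each greedy step, so a legal placement exists at every stage. Full coverage of $W_{1,j}$ can be secured by first matching each $v \in W_{1,j}$ to a distinct tree-vertex of $E_{1,j}$ (possible since $|W_{1,j}| \leq r \ll |E_{1,j}|$) whose parent-image will lie to the right of $v$, and then enforcing that each matched pair is realised in the greedy embedding; the abundance of free slots in $X_j$ makes this straightforward. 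Finally~\ref{property_mindegree} holds since $\Delta(\phi_1(P_1) \cup H_0) \leq 2\lambda n \ll n/200$ and $\ell = 1$, while~\ref{property_extend} is vacuous and~\ref{property_disjoint} follows from the greedy avoidance of $H_0$-edges.
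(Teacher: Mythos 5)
Your overall architecture matches the paper's: split $P_1$ into the two ends $E_{1,1},E_{1,2}$ meeting at $t_{1,1}$, two-colour with respect to $H_0$, and cover each colour class of $W_1$ with one end using an ordering in which each tree vertex has at most one rightward neighbour. But there is a genuine gap in how you propose to realise the embedding. You insist that \emph{every} vertex $u_k$ be placed strictly to the left of $\phi_1(p(u_k))$, inside a ground set $V_j$ of size exactly $|E_{1,j}|$. When $E_{1,j}$ is (close to) a path --- which is entirely possible, since path-like trees include genuine paths --- this constraint forces the embedding completely: starting from $x_0$ you must occupy the vertices of $V_j$ in strictly decreasing order of position, with no choice at any step. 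So the claim that there are ``roughly $n$ candidate positions at each greedy step'' is false precisely in the hardest case, and the greedy avoidance of (i) $H_0$-edges inside $X_j$ (note $H_0$ does have edges within $X$, coming from the sets $U_i$ used in step \ref{stepA}) and (ii) edges touching $A_{j'}$ for $j'\geq 2$ breaks down. The same forcing undermines your coverage argument for $W_{1,j}$, which in any case is circular as stated (you match $v$ to a tree vertex ``whose parent-image will lie to the right of $v$'' before the parent has been embedded).

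The paper avoids both problems at once by two-colouring the \emph{entire} ground set: since $H_0$ is a forest on at most $4r$ vertices, one can partition $(W_1\cup X)\setminus\{a_{1,1}\}$ into $U_{1,1}\cup U_{1,2}$ with $H_0[U_{1,j}]$ empty and $|U_{1,j}|=|E_{1,j}|-1$, so that \emph{any} embedding of $E_{1,j}$ into $U_{1,j}\cup\{a_{1,1}\}$ is automatically edge-disjoint from $H_0$, giving \ref{property_disjoint} for free. It then imposes the leftward-placement constraint only where it is needed: the $|W_1\cap U_{1,j}|$ leftmost tree vertices $Y_{1,j}$ (a prefix of the ordering, so all their leftward tree-neighbours are also in the prefix) are mapped order-preservingly onto $W_1\cap U_{1,j}$, which yields \ref{property_forwardedge_new1} while leaving vertices embedded in $X$ unconstrained; and property \ref{property_A} is secured by designating an independent set $Z_{1,j}$ of tree vertices, non-adjacent to $Y_{1,j}\cup\{t_{1,1}\}$, to be the only ones mapped into $A\setminus\{a_{1,1}\}$. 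To repair your proof you would need both of these devices (or equivalents): extend the independence requirement from $W_{1,j}$ to all of $V_j$, and drop the leftward constraint off $W_{1,j}$.
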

\begin{proof} Let $H_0$ be the embedded edges in $K_n$ so far, i.e., the union of the images of $\psi_i$ over all even $i\in [r]$.
Pick $a_{1,1}\in A_1$, and note that, by \ref{prop_psi_3}, $a_{1,1}$ has no neighbours in $H_0$. Note that $H_0$ is a forest with, by \eqref{eqn:Wi0} and \eqref{eq:Wsum}, at most $2|W|+2r=4r$ non-isolated vertices, and that $|E_{1,1}|+|E_{1,2}|=|P_1|+1\overset{\eqref{eqn:wix}}=|W_1\cup X|+1$ with $n/4\leq |E_{1,1}|,|E_{1,2}|\leq 3n/4$. Hence, we can partition $(W_1\cup X)\setminus \{a_{1,1}\}$ as $U_{1,1}\cup U_{1,2}$ so that $H_0[U_{1,1}]$ and $H_0[U_{1,2}]$ are empty, and $|U_{1,i}|=|E_{1,i}|-1$ for each $i\in [2]$. Indeed, as $H_0$ is bipartite, we can take a bipartition of the (at most $4r$) non-isolated vertices, and distribute the remaining vertices arbitrarily among the two vertex classes in such a way that the parts we obtain have sizes $|E_{1,1}|-1$ and $|E_{1,2}|-1$. Thus, we have that $H_0[U_{1,1}\cup\{a_{1,1}\}]$ and $H_0[U_{1,2}\cup\{a_{1,1}\}]$ are both empty graphs.

Arrange the vertices of both $E_{1,1}$ and $E_{1,2}$ from left to right so that each vertex has at most 1 rightward neighbour in $E_{1,1}$ and $E_{1,2}$, respectively, and $t_{1,1}$ is the rightmost vertex in each ordering. For each $j\in [2]$, let $Y_{1,j}$ be the $|W_1\cap U_{1,j}|$ leftmost vertices of $E_{1,j}$ in this ordering. For each $j\in[2]$,
using that $E_{1,j}$ is bipartite and $|E_{1,j}|\geq n/4$, take a set $Z_{1,j}\subset V(E_{1,j})\setminus (Y_{1,j}\cup\{t_{1,1}\})$ of $|A\cap U_{1,j}|-1$ vertices which form an independent set in $E_{1,j}$ and does not contain any neighbour of $t_{1,1}$ (using that $\Delta(E_{1,j})\leq \lambda n$) or any neighbour of $Y_{1,j}$ (using that each vertex has in $Y_{1,j}$ has at most one rightward neighbour in $E_{1,j}$).

For each $j\in [2]$, using the orderings of the vertices in $Y_{1,j}$ in $E_{1,j}$ and in $W_1\cap U_{1,j}$ in $K_n$, embed $E_{1,j}$ to $K_n[U_{1,j}\cup \{a_{1,1}\}]$ so that $Y_{1,j}$ is embedded to $W_1\cap U_{1,j}$ in an order-preserving way, $Z_{1,j}$ is embedded to
 $(A\cap U_{1,j})\setminus \{a_{1,1}\}$, and $t_{1,1}$ is embedded to $a_{1,1}$. Letting $\phi_1$ be the embedding of $P_1$, we check that $\phi_1$ gives a good sequence of length $1$. 
	Properties~\ref{property_disjoint},~\ref{property_image} and~\ref{property_forwardedge_new1} hold simply by construction, and Properties~\ref{property_extend} and \ref{property_forwardedge_new2} are vacuous for $\ell=1$.
As $H_0$ is a forest, each of whose components are trees with at most $\lambda n$ leaves, $\Delta(H_0)\leq \lambda n$, and therefore, as $\Delta(T_1)\leq \lambda n$ and $2\lambda n\leq n/200$, we have that \ref{property_mindegree} holds. For each $j\in [2]$, as $Y_{1,j}$ is embedded to $W_1\cap U_{1,j}$ and $Z_{i,j}$ is embedded to $(A\cap U_{1,j})\setminus \{a_{1,1}\}$, there are no edges of $\phi_1(E_{1,j})$ between $W_1\cap U_{1,j}$ and $(A\cap U_{1,j})\setminus \{a_{1,1}\}$, or inside $(A\cap U_{1,j})\setminus\{a_{1,1}\}$. Therefore, there are no edges of $\phi_1(P_1)$ between $W_1$ and $A\setminus \{a_{1,1}\}$, or inside $A\setminus\{a_{1,1}\}$, and thus \ref{property_A} holds.
\end{proof}


\subsection{Step~\ref{stepC}: Embedding $P_i$ and $P_{i+1}$}\label{sec_main_C}
In order to complete the proof of Theorem~\ref{theorem_treepacking_starlikestars}, subject only to the proof of the key lemmas, it is left to show that we can extend a good sequence. That is, we show the following lemma.

\begin{lemma}\label{lemma_treepairs}
	Let $i$ be even with $2\leq i\leq r-1$. Then any good sequence $\phi_1,\dots,\phi_{i-1}$ of $i-1$ embeddings extends to a good sequence $\phi_1,\dots,\phi_{i+1}$ of $i+1$ embeddings.
\end{lemma}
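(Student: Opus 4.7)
The plan is to construct $\phi_i$ and $\phi_{i+1}$ via two applications of Lemma~\ref{lemma_finalembedding}, following step~\ref{stepC} of the proof sketch. Let $H$ denote the union of all edges embedded so far, namely $\phi_1(P_1),\ldots,\phi_{i-1}(P_{i-1})$ together with $\psi_j(E_{j,1})$ for every even $j\geq i$. By~\ref{property_forwardedge_new1},~\ref{property_forwardedge_new2}, and~\ref{prop_psi_1}, every vertex in $W_{i+1}$ has at most $i$ rightward $H$-neighbours---at most one from $\phi_1$, at most two from each pair $\phi_j\cup\phi_{j+1}$ for even $2\leq j\leq i-2$, and at most one from $\psi_i$---so a greedy right-to-left colouring yields a partition $W_{i+1}=W_{i,1}\cup\cdots\cup W_{i,i+1}$ into independent sets of $H$.

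For the first application, let $T$ be the subtree of $P_i$ on vertex set $(V(P_i)\setminus V(E_{i,1}))\cup\{t_{i,1}\}$, so $|T|=|P_i|-|E_{i,1}|+1\geq 99n/100$ by~\eqref{eqn:Wi0}. Pick $A^-\subseteq A_i$ of size $16$ containing $a_{i,1}$ and a suitable $X^-\subseteq X$ avoiding $A\setminus A^-$ and the previously used images of the $\psi_j$, of size $|T|-|W_{i+1}|$. Conditions~\itref{cond_finalembedlemma1}--\itref{cond_finalembedlemma3} follow from~\ref{property_A},~\ref{prop_psi_3},~\ref{property_mindegree}, and the independence of the partition. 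Applying Lemma~\ref{lemma_finalembedding} with $U=\{t_{i,1}\}$ and the pre-embedding $t_{i,1}\mapsto a_{i,1}$ yields the extension of $\psi_i$ to an embedding $\phi_i$ of $P_i$.

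For the second application, update $H$ to include $\phi_i(P_i)$ and set $W^-=W_{i+1}\setminus\{\text{leftmost vertex of each }W_{i,j}\}$. By~\itref{prop:S1}, these $i+1$ excluded vertices are precisely those that may carry two rightward $\phi_i$-edges, so omitting them from $\phi_{i+1}$ secures~\ref{property_forwardedge_new2}. Since $\phi_i(P_i)\cap K_n[W_{i,j}]$ is a forest and hence bipartite, I refine each $W_{i,j}\setminus\{\text{leftmost}\}$ along this bipartition so that the leftmost vertex of every resulting part is isolated in the induced $H$-subgraph, giving~\itref{cond_finalembedlemma2}. Applying Lemma~\ref{lemma_finalembedding} with $T=P_{i+1}$, $U=\emptyset$, $A^-\subseteq A_{i+1}$ of size $16$, and a suitable $X^-$ produces $\phi_{i+1}$. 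Properties~\ref{property_disjoint}--\ref{property_A} of the extended good sequence follow routinely: \ref{property_disjoint},~\ref{property_image},~\ref{property_extend} from the construction;~\ref{property_forwardedge_new1} by inheritance;~\ref{property_forwardedge_new2} as argued above; and~\ref{property_mindegree},~\ref{property_A} from~\itref{prop:S2} and~\itref{prop:S3}. The main obstacle is arranging~\itref{cond_finalembedlemma2} in the second application: although each $W_{i,j}\setminus\{\text{leftmost}\}$ induces a bipartite $\phi_i$-forest, the refinement must both ensure the leftmost vertex of every refined part is isolated in the induced subgraph and keep the total number of parts at most $r$, which forces a careful partition aligned with both the forest bipartition and the ordering $v_1,\ldots,v_n$.
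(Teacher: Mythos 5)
There is a genuine gap, and it is precisely the counting issue flagged in the proof sketch (``we actually cannot do this for all $j\in[i+1]$ as it will omit slightly too many vertices from the embedding for \ref{prop:embeddingprinciple}''). In your first application you let all $i+1$ sets $W_{i,1},\dots,W_{i,i+1}$ be parts in the partition for Lemma~\ref{lemma_finalembedding}, so by \itref{prop:S1} the leftmost vertex of each of the $i+1$ parts may receive $2$ rightward edges in $\phi_i$. To secure \ref{property_forwardedge_new2} you then omit all $i+1$ of these vertices from the image of $\phi_{i+1}$. But by \eqref{eqn:wix}, $|W_{i+1}\cup X|=|P_{i+1}|+i$, so the embedding of $P_{i+1}$ can omit only $i$ vertices of $W_{i+1}\cup X$ in total; omitting $i+1$ vertices of $W_{i+1}$ alone forces $|X^-|=|X|+1$, so no suitable $X^-\subset X$ exists. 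The paper's proof avoids this by treating three of the sets asymmetrically: the two ends $E_{i+1,1},E_{i+1,2}$ of $P_{i+1}$ are first embedded via Lemma~\ref{lemma_forends} to cover $W_{i,i-1}$ and $W_{i,i}$ minus their leftmost vertices with \emph{no} turnaround (every covered vertex gets at most one rightward edge), and the end $E_{i,2}$ of $P_i$ does the same for $W_{i,i+1}$; only the remaining $i-2$ sets use turnarounds for both trees. This brings the omission counts down to exactly $i-1$ for $P_i$ and $i$ for $P_{i+1}$. Your proposal never invokes Lemma~\ref{lemma_forends} and never embeds $E_{i,2}$, $E_{i+1,1}$ or $E_{i+1,2}$ separately, so this mechanism is absent.

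A second, related problem is your treatment of \itref{cond_finalembedlemma2} for the embedding of $P_{i+1}$. Refining each $W_{i,j}\setminus\{\text{leftmost}\}$ along the bipartition of the $\phi_i$-forest roughly doubles the number of parts, and the leftmost vertex of every refined part becomes a potential turnaround for $\phi_{i+1}$, i.e.\ a vertex with $2$ rightward edges in $\phi_{i+1}(P_{i+1})$. Unless each such vertex has \emph{no} rightward edge in $\phi_i(P_i)$, property \ref{property_forwardedge_new2} fails (the condition of being isolated in $H[W_j^-]$ does not prevent a rightward edge of $\phi_i$ going into $X$). The paper arranges this by choosing the parts for $P_{i+1}$'s application so that their leftmost vertices ($w_{i,j,2}$ for $j\le i-2$ and $w_{i,i+1,1}$) are exactly the vertices excluded from $\operatorname{Im}(\phi_i)$ by \ref{prop:newS0}; your refinement does not control which vertices become leftmost in the refined parts, nor does it bound the number of resulting turnarounds by the omission budget. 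So the ``main obstacle'' you identify is real, but it cannot be resolved by a refinement of the partition alone; it requires restructuring the argument around the separately embedded ends.
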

\begin{proof}
	Let $H$ be the subgraph of $K_n$ whose edges are used in the images of $\phi_1,\dots,\phi_{i-1}$, $\psi_i,\psi_{i+2},\dots,\psi_{r-1}$, and let $G$ be the complement of $H$.
Recall that $\psi_i$ is an embedding of $E_{i,1}$, and that the properties \ref{prop_psi_1}--\ref{prop_psi_3} hold, moreover, $W_i$ is the disjoint union of $W_{i,0}$ and $W_{i+1}$. By \ref{prop_psi_1}, \ref{property_forwardedge_new1} and \ref{property_forwardedge_new2}, each vertex in $W_{i+1}$ has at most $1+1+2(i-2)/2=i$ rightward neighbours in $H$, and hence $H[W_{i+1}]$ is $i$-degenerate and thus can be partitioned as $W_{i,1}\cup \ldots\cup W_{i,i+1}$ so that these sets are all independent sets in $H$. For each $j\in [i+1]$, let $w_{i,j,1}$ and $w_{i,j,2}$ be the leftmost two vertices in that order in $W_{i,j}$ (with the convention that if no such vertex or vertices exist then, for example, we interpret $\{w_{i,j,1}\}$ as an empty set).



\smallskip \noindent\textbf{Embedding the ends of $P_{i+1}$.}
We start by embedding the two ends of $P_{i+1}$ to cover $W_{i,i-1}\setminus \{w_{i,i-1,1}\}$ and  $W_{i,i}\setminus \{w_{i,i,1}\}$ (cf.\ Figure~\ref{fig:turnaround1}). We will do so while avoiding high-degree vertices in $H$, so for this let $X^{\mathrm{high}}\subset X$ be the set of vertices in $X$ with at least $\frac{n}{100}-2\lambda n$ neighbours in $X$ in $H$. As $e(H)\leq r\cdot n=\varepsilon n^2$,
\begin{equation}\label{eqn:high}
|X^{\mathrm{high}}|\leq \frac{2\eps n^2}{n/200}=400\eps n,
\end{equation}
and, by \ref{property_mindegree}, we have $X^{\mathrm{high}}\cap (A_i\cup A_{i+1})=\emptyset$.
Let $A_{i+1,1}\subseteq A_{i+1}$ be an arbitrary subset of size $2$ and pick $a_{i+1,1}\in A_{i+1,1}$, let $X_1^{\mathrm{forb}}=X^{\mathrm{high}}\cup (A\setminus A_{i+1,1})$, and
let $W_{i+1,i-1}^-:=W_{i,i-1}\setminus \{w_{i,i-1,1}\}$. Note that $|X_1^{\mathrm{forb}}|\leq n/100$,
and there are no edges in $H$ inside $W_{i+1,i-1}^-\cup A_{i+1,1}$  by \ref{property_A} and the definition of our sets $W_{i,j}$. As
\begin{equation}\label{eqn:Wsize}
|W_{i+1,i-1}^-|\leq |W|= r= \eps n\overset{\eqref{eqn:Wi0_2}}{<} |E_{i+1,1}|\overset{\eqref{eqn:Wi0_2}}{\leq} 4\eps n,
\end{equation}
then, we can apply Lemma~\ref{lemma_forends} to $W$, $X$, $W_{i+1,i-1}^-$, $A_{i+1,1}$, $X^{\mathrm{forb}}_1$, $E_{i+1,1}$ and $t_{i+1,1}\in V(E_{i+1,1})$ (recall that $t_{i+1,j}$ is the unique connector vertex of $E_{i+1,j}$). That is, we can take an embedding $\psi_{i+1}$ of $E_{i+1,1}$ in $G[W_{i+1,i-1}^-\cup A_{i+1,1}\cup (X\setminus (X^{\mathrm{high}}\cup A))]$ such that $\psi_{i+1}(t_{i+1,1})=a_{i+1,1}$, $W_{i+1,i-1}^-=W_{i,i-1}\setminus \{w_{i,i-1,1}\}\subseteq \psi_{i+1}(V(E_{i+1,1}))$, and each vertex in $W_{i+1,i-1}^-$ has at most 1 rightward neighbour in $\psi_{i+1}(E_{i+1,1})$.

Next, if $t_{i+1,2}=t_{i+1,1}$, let $A_{i+1,2}\subseteq A_{i+1}$ be a subset of size $2$ with $A_{i+1,1}\cap A_{i+1,2}=\{a_{i+1,1}\}$ and let $a_{i+1,2}=a_{i+1,1}$. If $t_{i+1,2}\neq t_{i+1,1}$, let $A_{i+1,2}\subseteq A_{i+1}\setminus A_{i+1,1}$ be a subset of size $2$
 and pick some $a_{i+1,2}\in A_{i+1,2}$. 
 If $t_{i+1,2}\neq t_{i+1,1}$, extend $\psi_{i+1}$ to embed $E_{i+1,1}\cup E_{i+1,2}[\{t_{i+1,2}\}]$ by setting $\psi_{i+1}(t_{i+1,2})=a_{i+1,2}$. (If $t_{i+1,1}t_{i+1,2}$ is an edge of $P_{i+1}$, then we will embed this later, in an application of Lemma~\ref{lemma_finalembedding}, using that, then, by \ref{property_A}, $\psi_{i+1}(t_{i+1,1})\psi_{i+1}(t_{i+1,2})=a_{i+1,1}a_{i+1,2}$ is an edge of $G$.)
 Let $X_2^{\mathrm{forb}}=(X^{\mathrm{high}}\cup A\cup \psi_{i+1}(V(E_{i+1,1})))\setminus A_{i+1,2}$,
 so that
 \[
|X_2^{\mathrm{forb}}|\leq |X^{\mathrm{high}}|+|A|+|E_{i+1,1}|\overset{\eqref{eqn:high},\eqref{eqn:Wi0_2}}{\leq} 400\eps n+20\eps n+4\eps n\leq \frac{n}{100},
 \]
and let $W_{i+1,i}^-:=W_{i,i}\setminus \{w_{i,i,1}\}$  (again, cf.\ Figure~\ref{fig:turnaround1}).

Then (using the same calculation as in~\eqref{eqn:Wsize}, as well as~\ref{property_A}), by~\cref{lemma_forends} again, applied to $W$, $X$, $W_{i+1,i}^-$, $A_{i+1,2}$, $X^{\mathrm{forb}}_2$, $E_{i+1,2}$ and $t_{i+1,2}\in V(E_{i+1,2})$, we can extend $\psi_{i+1}$  to an embedding $\psi_{i+1}'$ of $E_{i+1,1}\cup E_{i+1,2}$ in $G$ 
such that, recapping in part our properties from embedding $E_{i+1,1}$, we have the following.
\stepcounter{propcounter}
\begin{enumerate}[label = \textbf{\Alph{propcounter}\arabic{enumi}}]
\item $\operatorname{Im}(\psi_{i+1}')
\subseteq (W_{i,i-1}\setminus \{w_{i,i-1,1}\})\cup (W_{i,i}\setminus \{w_{i,i,1}\})\cup A_{i+1,1}\cup A_{i+1,2}\cup (X\setminus (X^{\mathrm{high}}\cup A))$.\label{prop_eil_0}
\item The connectors $t_{i+1,1}$ and $t_{i+1,2}$ map to $a_{i+1,1},a_{i+1,2}\in A_{i+1}$, respectively.\label{prop_ei1_1}
\item We have $(W_{i,i-1}\cup W_{i,i})\setminus \{w_{i,i-1,1},w_{i,i,1}\}\subseteq \operatorname{Im}(\psi_{i+1}')$.
\label{prop_ei1_2}
\item Each vertex in $(W_{i,i-1}\cup W_{i,i})\setminus \{w_{i,i-1,1},w_{i,i,1}\}$ has at most $1$ rightward neighbour in $\psi_{i+1}'(E_{i+1,1}\cup E_{i+1,2})$.\label{prop_ei1_3}
\end{enumerate}

\smallskip \noindent\textbf{Embedding the unembedded end of $P_{i}$.}
Next, we embed the unembedded end  $E_{i,2}$ of $P_i$ to cover $W_{i,i+1}^-:=W_{i,i+1,1}\setminus \{w_{i,i+1}\}$. If $t_{i,2}\neq t_{i,1}$, then select $a_{i,2}\in A_{i}\setminus \{a_{i,1}\}$ and let $A_{i,1}\subseteq A_{i}\setminus \{a_{i,1}\}$
be a subset of size $2$ containing $a_{i,2}$. Otherwise (if $t_{i,2}= t_{i,1}$), then let $a_{i,2}=a_{i,1}$ and let $A_{i,1}\subseteq A_{i}$ be a subset of size two with $a_{i,1}\in A_{i,1}$. If $t_{i,2}\neq t_{i,1}$, extend $\psi_{i}$ to embed $E_{i,1}\cup E_{i,2}[\{t_{i,2}\}]$ by setting $\psi_{i}(t_{i,2})=a_{i,2}$. (Again, if $t_{i,1}t_{i,2}$ is an edge of $P_i$, we do not yet embed it, but will do so later during an application of Lemma~\ref{lemma_finalembedding}, using that, then, by \ref{property_A}, $\psi_i(t_{i,1})\psi_i(t_{i,2})=a_{i,1}a_{i,2}$ is an edge of $G$.)

Let $X_3^{\mathrm{forb}}=(X^{\mathrm{high}}\cup A\cup \psi_{i}(V(E_{i,1})))\setminus A_{i,1}$,
so that, similarly to $X_2^{\mathrm{forb}}$, we have $|X_3^{\mathrm{forb}}|\leq \frac{n}{100}$. Let $W_{i,i+1}^-:=W_{i,i+1}\setminus \{w_{i,i+1,1}\}$.
Then, by~\cref{lemma_forends} again, applied to $W$, $X$, $W_{i,i+1}^-$, $A_{i,1}$, $X^{\mathrm{forb}}_3$, $E_{i,2}$ and $t_{i,2}\in V(E_{i,2})$, we can extend $\psi_{i}$ to an embedding $\psi_i'$ of $E_{i,1}\cup E_{i,2}$ in $G$ 
such that, recapping in part our properties from embedding $E_{i,1}$ (\ref{prop_psi_1}--\ref{prop_psi_3}), we have the following.
\stepcounter{propcounter}
\begin{enumerate}[label = \textbf{\Alph{propcounter}\arabic{enumi}}]
\item $\operatorname{Im}(\psi_i')\subseteq W_{i,0}\cup (W_{i,i+1}\setminus \{w_{i,i+1,1}\})\cup \{a_{i,1}\}\cup A_{i,1}\cup (X\setminus (X^{\mathrm{high}}\cup A))\cup(X^{\mathrm{high}}\cap \psi_i(V(E_{i,1})))$.\label{prop:groundset}
\item The connectors $t_{i,1}$ and $t_{i,2}$ map to $a_{i,1},a_{i,2}\in A_{i,1}\cup\{a_{i,1}\}$, respectively.\label{prop:connectortoa}
\item We have $W_{i,0}\cup (W_{i,i+1}\setminus \{w_{i,i+1,1}\})\subseteq \operatorname{Im}(\psi_i')$.
\label{H3}
\item Each vertex in $W_{i,0}\cup (W_{i,i+1}\setminus \{w_{i,i+1,1}\})$ has at most $1$ rightward neighbour in $\psi_{i}'(E_{i+1,1}\cup E_{i+1,2})$.\label{prop:forward}
\end{enumerate}


\smallskip \noindent\textbf{Embedding the rest of $P_{i}$.}
We will now embed the rest of $P_i$, which is the tree $F_i$, using \cref{lemma_finalembedding}. Let $G'$ be $G$ with any edges used in the embeddings $\psi_{i+1}'$ and $\psi_i'$ removed, and let $H'$ be the complement of $G'$ in $K_n$. 
For each $j\in [i-2]$, let $W_{i,j}^-=W_{i,j}\setminus \{w_{i,j,2}\}$ (cf.\ Figure~\ref{fig:turnaround1}). Let $W_{i,i-1}^-=W_{i,i-1}$ and $W_{i,i}^-=W_{i,i}$.
Let $W_i^-=\bigcup_{j\in [i]}W_{i,j}^-$. Let $X_{i,0}^-=(X\setminus \mathrm{Im}(\psi_i'))\cup \{a_{i,1},a_{i,2}\}$ and let $A^-_i\subset A_i\setminus (A_{i,1}\setminus\{a_{i,2}\})$ have size 16 with $a_{i,1},a_{i,2}\in A^-_i$. Recalling that, by our convention, if $|W_{i,j}|<2$ then $\{w_{i,j,2}\}=\emptyset$, and similarly, if $|W_{i,j}|<1$ then $\{w_{i,j,1}\}=\emptyset$, let $b_i=(i-1)-|\{w_{i,1,2},\dots,w_{i,i-2,2}\}\cup\{w_{i,i+1,1}\}|\geq 0$. Let $X_i^-$ be obtained from $X_{i,0}^-$ by removing $b_i$ arbitrary elements not belonging to $A$.

We will now check that the conditions of Lemma~\ref{lemma_finalembedding} hold for the graph $G'$ (with complement $H'$), tree $F_i$, the subset $W_i^-\subseteq W$ partitioned as  $W_{i,1}^-\cup\ldots\cup W_{i,i}^-$ (so that $k=i$), $X_i^-$ taking the role of $X^-$, and $A^-$ being given by $A_i^-$. (The sets $W$, $X$, $A$ for the application of the lemma are given by the sets with the same notation.)  

First, by \ref{property_mindegree}, we have
\begin{equation}\label{eqn:deltaGprime}
\delta(G')\geq n-|W|-\left(\max_{v\in V(G)}d_H(v,X)\right)-\Delta(P_i)-\Delta(P_{i+1})\geq n-r-\frac{n}{100}-2r-2\lambda n\geq \frac{49n}{50},
\end{equation}
as required for an application of Lemma~\ref{lemma_finalembedding}. Also, $e(H)\leq \eps n^2$ as $H$ is contained in the union of $r=\eps n$ trees.
Furthermore, by \eqref{eq:Wsum} and \eqref{eqn:Wi0}, we have $|F_i|\geq n-|E_{i,1}|-|E_{i,2}|\geq n-8\eps n\geq \frac{99}{100}n$ and, like $P_i$, $F_i$ has at most $\lambda n$ leaves.

Next,
\begin{align*}
|W_i^-\cup X_{i,0}^-|&\;\;\;=|W_i\cup X|-|\{w_{i,1,2},\ldots,w_{i,i-2,2}\}|-|W_{i,0}|-|W_{i,i+1}|-|X\cap \mathrm{Im}(\psi_i')|+|\{a_{i,1},a_{i,2}\}|\\
&\overset{\ref{prop:groundset},\ref{H3}}{=}|W_i\cup X|-|\{w_{i,1,2},\ldots,w_{i,i-2,2}\}|-| \mathrm{Im}(\psi_i')|-|\{w_{i,i+1,1}\}|+|\{a_{i,1},a_{i,2}\}|\\
&\;\;\;=|W_i\cup X|-(i-1-b_i)-| \mathrm{Im}(\psi_i')|+|\{a_{i,1},a_{i,2}\}|\\
&\;\;\,\overset{\eqref{eqn:wix}}=|P_i|+(i-1)-(i-1)-|\mathrm{Im}(\psi_i')|+|\{a_{i,1},a_{i,2}\}|+b_i\\
&\;\;\;=|P_i|-|V(E_{i,1})\cup V(E_{i,2})|+|\{t_{i,1},t_{i,2}\}|+b_i\\
&\;\;\;=|F_i|+b_i,
\end{align*}
so we have
\(|W_i^-\cup X_{i}^-|=|F_i|.\) Also, we clearly have $|A_i^-|=16$ and $|A|\leq 20r$ by definition. So we have checked all the conditions listed in the first two paragraphs of Lemma~\ref{lemma_finalembedding}, and it remains to check \itref{cond_finalembedlemma1}, \itref{cond_finalembedlemma2}, \itref{cond_finalembedlemma3} .

Next, note that $H'$ is the graph formed from $H$ by adding any edges from the image of $\psi_{i+1}'$ and $\psi_i'$ which are not already in $H$. 
Then, for each $j\in [i-2]$, $W_{i,j}^-$ is an independent set in $H$, so, by \ref{prop_eil_0} and \ref{prop:groundset}, $W_{i,j}^-$ is an independent set in $H'$. Furthermore, by \ref{prop:groundset}, for each $j\in \{i-1,i\}$, the only edges in $H'[W_{i,j}^-]$ are from the image of $\psi_{i+1}'$, so as this image is 2-colourable, $H'[W_{i,j}^-]$ is $2$-colourable, and, in combination with \ref{prop_eil_0}, $w_{i,j,1}$ is an isolated vertex in $H'[W_{i,j}^-]$.
Thus, we will have that \itref{cond_finalembedlemma2} holds for an application of Lemma~\ref{lemma_finalembedding}. Furthermore, there are no edges between $W^-_i$ and $A^-_i$ or inside  $A_i^-$ in $H'$ by \ref{property_A} (and \ref{prop_eil_0} and \ref{prop:groundset}), so that \itref{cond_finalembedlemma1} holds for an application of Lemma~\ref{lemma_finalembedding}. Finally, each vertex in $W_i^-$ has at most $|W|=r=\eps n$ leftward neighbours in $H'$ and at most $r=\eps n$ rightward neighbours in $H'$ by \ref{property_forwardedge_new1}, \ref{property_forwardedge_new2}, \ref{prop_eil_0}, \ref{prop_ei1_3}, \ref{prop:groundset} and \ref{prop:forward}. Thus, each vertex in $W_i^-$ has at most $2\eps n\leq 2\lambda n$ neighbouring edges in $H'$, so that \itref{cond_finalembedlemma3} holds for an application of Lemma~\ref{lemma_finalembedding}.

Therefore, we have checked all the conditions to apply Lemma~\ref{lemma_finalembedding}. Taking $U=\{t_{i,1},t_{i,2}\}$ embedded to $a_{i,1},a_{i,2}$ respectively (recall \ref{prop:connectortoa} and also \itref{cond_finalembedlemma1} checked above to note this is a valid embedding), we get that
$\psi_i'$ extends to an embedding $\phi_i$ of $P_i$ in $G'\cup \psi_i'(E_{i,1}\cup E_{i,2})$ such that the following properties hold. (Note that $\phi_i$ is obtained by combining the embedding $\psi_i'$ with the embedding guaranteed by Lemma~\ref{lemma_finalembedding}.)
\stepcounter{propcounter}
\begin{enumerate}[label = \textbf{\Alph{propcounter}\arabic{enumi}}]
\item $\operatorname{Im}(\phi_i)\subseteq (W_i\cup X)\setminus(\{w_{i,j,2}:j\in[i-2]\}\cup\{w_{i,i+1,1}\}).$\label{prop:newS0}
\item Each vertex in $\operatorname{Im}(\phi_i)\cap W_i$ has at most 1 rightward neighbour in $\phi_i(P_i)$, except for $w_{i,j,1}$, $j\in [i]$, which have at most 2 rightward neighbours.\label{prop:newS1}
\item Every vertex in $V(G)\setminus (A_i^-\cup\operatorname{Im}(\psi_i'))$ with more than $\frac{n}{200}$ neighbours in $X$ in $H'$, as well as every vertex in $A\setminus A_i$, has at most 2 neighbours in $X$ in $\phi_i(P_i)$.\label{prop:newS2}
\item $\phi_i(P_i)$ has no edges between $W_i$ and $A\setminus A_i$, or inside $A\setminus A_i$.\label{prop:newS3}
	\end{enumerate}
Indeed, \ref{prop:newS0} holds using \ref{prop:groundset} (and the definition of $W_i^-$, $X_i^-$); \ref{prop:newS1} holds by \ref{prop:forward} and \itref{prop:S1}; \ref{prop:newS2} holds by \itref{prop:S2} (and \ref{prop:groundset}, as $(A\setminus A_i)\cap \operatorname{Im}(\psi_i')=\emptyset$ and $A_i^-\subseteq A_i$); and \ref{prop:newS3} holds by \itref{prop:S3}  and \ref{prop:groundset}.

\smallskip \noindent\textbf{Embedding the rest of $P_{i+1}$.}
  Finally, we will embed the rest of $P_{i+1}$, which is the tree $F_{i+1}$. Let $G''$ be obtained from $G'$ by removing any edges in $\phi_i(P_i)$, and let $H''$ be the complement of $G''$ in $K_n$. For each $j\in [i-2]$, let $W_{i+1,j}^-=W_{i,j}\setminus \{w_{i,j,1}\}$ (cf. Figure~\ref{fig:turnaround1}). Let $W_{i+1,i+1}^-=W_{i,i+1}$.
Let $W_{i+1}^-=\left(\bigcup_{j\in [i-2]}W_{i+1,j}^-\right)\cup W_{i+1,i+1}^-$. Let $X_{i+1,0}^-=(X\setminus \operatorname{Im}(\psi_{i+1}'))\cup \{a_{i+1,1},a_{i+1,2}\}$ and let $A^-_{i+1}\subset (A_{i+1}\setminus (A_{i+1,1}\cup A_{i+1,2}))\cup\{a_{i+1,1},a_{i+1,2}\}$ have size 16 with $a_{i+1,1},a_{i+1,2}\in A^-_{i+1}$. Similarly to before, let $b_{i+1}'=i-|\{w_{i,1,1},\dots,w_{i,i,1}\}|\geq 0$, and let $X_{i+1}^-$ be obtained from $X_{i+1,0}^-$ by removing $b_{i+1}'$ arbitrary elements not belonging to $A$.

We will now check that we can apply Lemma~\ref{lemma_finalembedding} for the graph $G''$ (with complement $H''$), tree $F_{i+1}$, the subset $W_{i+1}^-\subseteq W$ partitioned as  $W_{i+1,1}^-\cup\ldots\cup W_{i+1,i-2}^-\cup W_{i+1,i+1}^-$ (so that $k=i-1$), $X_{i+1}^-$ taking the role of $X^-$, and $A^-$ being given by $A_{i+1}^-$. (Again, the sets $W$, $X$, $A$ for the application of the lemma are given by the sets with the same notation.)  

We have, using \eqref{eqn:deltaGprime}, that $\delta(G'')\geq \delta(G')-\lambda n\geq  \frac{49}{50}n$, and, similarly to $F_{i}$, that $|F_{i+1}|\geq \frac{99}{100}n$ and $F_{i+1}$ has at most $\lambda n$ leaves. 
We have $e(H'')\leq \eps n^2$ as before, and
\begin{align*}
|W_{i+1}^-&\cup X_{i+1,0}^-|\\
&\;\;\,=|W_{i+1}\cup X|-|\{w_{i,1,1},\ldots,w_{i,i-2,1}\}|-|W_{i,i-1}|-|W_{i,i}|-|X\cap \mathrm{Im}(\psi_{i+1}')|+|\{a_{i+1,1},a_{i+1,2}\}|\\
&\overset{\ref{prop_eil_0},\ref{prop_ei1_2}}{=}|W_{i+1}\cup X|-|\{w_{i,1,1},\ldots,w_{i,i-2,1}\}|-|\mathrm{Im}(\psi_{i+1}')|-|\{w_{i,i-1,1},w_{i,i,1}\}|+|\{a_{i+1,1},a_{i+1,2}\}|\\
&\;\;\;=|W_{i+1}\cup X|-(i-b_{i+1}')-|\mathrm{Im}(\psi_{i+1}')|+|\{a_{i+1,1},a_{i+1,2}\}|\\
&\;\;\,\overset{\eqref{eqn:wix}}=|P_{i+1}|+i-i-|\mathrm{Im}(\psi_{i+1}')|+|\{a_{i+1,1},a_{i+1,2}\}|+b_{i+1}'\\
&\;\;\;=|P_{i+1}|-|E_{i+1,1}\cup E_{i+1,2}|+|\{t_{i+1,1},t_{i+1,2}\}|+b_{i+1}'\\
&\;\;\;=|F_{i+1}|+b_{i+1}',
\end{align*}
so we have
\(|W_{i+1}^-\cup X_{i+1}^-|=|F_{i+1}|.\) Thus, we have shown that the conditions in the first two paragraphs of Lemma~\ref{lemma_finalembedding} hold, and it remains to check \itref{cond_finalembedlemma1}, \itref{cond_finalembedlemma2}, \itref{cond_finalembedlemma3}.

For each $j\in[i-2]\cup\{i+1\}$, the only edges appearing in $H'[W_{i+1,j}^-]$ are those coming from $\phi_i(P_i)$, thus, each such $H'[W_{i+1,j}^-]$ is $2$-colourable. Moreover, for all $j\in[i-2]$, $w_{i,j,2}$ is not in the image of $\phi_i$ by \ref{prop:newS0}, and hence the first element of $W_{i+1,j}^-$ (namely $w_{i,j,2}$) is an isolated vertex in $H''[W_{i+1,j}^-]$. Similarly, again  by \ref{prop:newS0}, the first element of $W_{i+1,i+1}^-$ (namely $w_{i,i+1,1}$) is an isolated vertex in $H''[W_{i+1,i+1}^-]$. Thus, \itref{cond_finalembedlemma2} holds for an application of Lemma~\ref{lemma_finalembedding}.
Furthermore, in $H''$, there are no edges between $W_{i+1}^-$ and $A^-_{i+1}$, or inside $A_{i+1}^-$, by \ref{property_A}, \ref{prop_eil_0} and \ref{prop:newS3}.
Thus, we will have that \itref{cond_finalembedlemma1} holds for an application of Lemma~\ref{lemma_finalembedding}. Moreover, similarly to before, each vertex in $W_{i+1}^-$ has at most $|W|=r$ leftward edges, and at most $r$ rightward edges  by \ref{property_forwardedge_new1}, \ref{property_forwardedge_new2}, \ref{prop_eil_0}, \ref{prop_ei1_3}, \ref{prop:groundset}, \ref{prop:forward} and \ref{prop:newS1} (as $w_{i,j,1}\notin W_{i+1}^-$ for each $j\in [i]$), for at most $2r\leq \lambda n$ neighbouring edges in total, so \itref{cond_finalembedlemma3} holds as well for an application of Lemma~\ref{lemma_finalembedding}.

Therefore, we have checked all the conditions to apply Lemma~\ref{lemma_finalembedding}. Taking $U=\{t_{i+1,1},t_{i+1,2}\}$ embedded to $a_{i+1,1},a_{i+1,2}$ respectively (recall \ref{prop_ei1_1} and also \itref{cond_finalembedlemma1} checked above to note this is a valid embedding), we get that
$\psi_{i+1}'$ extends to an embedding $\phi_{i+1}$ of $P_{i+1}$ in $G''\cup \psi_{i+1}'(E_{i+1,1}\cup E_{i+1,2})$ such that the following properties hold. 
\stepcounter{propcounter}
\begin{enumerate}[label = \textbf{\Alph{propcounter}\arabic{enumi}}]
\item $\operatorname{Im}(\phi_{i+1})\subseteq (W_{i+1}\cup X)\setminus\{w_{i,j,1}:j\in[i]\}$.\label{prop:newnewS0}
\item Each vertex in $\operatorname{Im}(\phi_{i+1})\cap W_{i+1}$ has at most 1 rightward neighbour in $\phi_{i+1}(P_{i+1})$, except for $w_{i,j,2}$, $j\in [i-2]$, and $w_{i,i+1,1}$, which have at most 2 rightward neighbours.\label{prop:newnewS1}
\item Every vertex in $V(G)\setminus (A_{i+1}^-\cup\operatorname{Im}(\psi_{i+1}'))$ with more than $\frac{n}{200}$ neighbours in $X$ in $H''$, as well as every vertex in $A\setminus A_{i+1}$, has at most 2 neighbours in $X$ in $\phi_{i+1}(P_{i+1})$.\label{prop:newnewS2}
\item $\phi_{i+1}(P_{i+1})$ has no edges between $W_{i+1}$ and $A\setminus A_{i+1}$, or inside $A\setminus A_{i+1}$.\label{prop:newnewS3}
	\end{enumerate}
Indeed, as before, \ref{prop:newnewS0} holds using \ref{prop_eil_0} (and the definition of $W_{i+1}^-$, $X_{i+1}^-$); \ref{prop:newnewS1} holds by \ref{prop_ei1_3} and \itref{prop:S1}; \ref{prop:newnewS2} holds by \itref{prop:S2} (and \ref{prop_eil_0}); and \ref{prop:newnewS0} holds by \itref{prop:S3}  and \ref{prop_eil_0}.

  \smallskip \noindent\textbf{Checking \ref{property_disjoint}--\ref{property_A}.}
	We check that all the conditions are satisfied for the extension of our good sequence by $\phi_i$ and $\phi_{i+1}$. Let $H_{i+1}$ be the subgraph of $K_n$ whose edges are those edges in the image of $\phi_1,\dots,\phi_{i+1},\psi_{i+2},\psi_{i+4},\dots,\psi_{r-1}$, and let $G_{i+1}$ be the complement of $H_{i+1}$ in $K_n$.


	Properties~\ref{property_disjoint}, \ref{property_image} and~\ref{property_extend} hold for $\phi_i$ and $\phi_{i+1}$ by our construction (and \ref{prop:newS0}, \ref{prop:newnewS0} for \ref{property_image}), while there is nothing additional to show for Property~\ref{property_forwardedge_new1}. For Property~\ref{property_forwardedge_new2}, it is sufficient to check that each vertex in $W_{i}$ has at most 2 rightward neighbours in $\phi_i(P_{i})\cup \phi_{i+1}(P_{i+1})$ (where Figure~\ref{fig:turnaround1} is a helpful reference). This certainly holds for the vertices in $W_{i,0}=W_{i+1}\setminus W_i$, by the properties of $\psi_i$ (\ref{prop_psi_1}--\ref{prop_psi_3}) together with  properties \ref{property_image} and \ref{property_extend} verified above, so it remains to check the vertices in $W_{i+1}$. From our embeddings, and in particular from \ref{prop:newS1} and \ref{prop:newnewS1}, every vertex in 
$W_{i+1}\setminus(\bigcup_{j\in [i-2]}\{w_{i,j,1},w_{i,j,2}\}\cup\{w_{i,i-1,1},w_{i,i,1},w_{i,i+1,1}\})$ 
has at most 1 rightward neighbour in  $\phi_i(P_{i})$ and at most 1 rightward neighbour in $\phi_{i+1}(P_{i+1})$.
  For each $j\in [i]$, by \ref{prop:newS1} and \ref{prop:newnewS0}, $w_{i,j,1}$ has at most 2 rightward neighbours in $\phi_{i}(P_{i})$ and no rightward neighbours in $\phi_{i+1}(P_{i+1})$.
 For each $j\in [i-2]$, by \ref{prop:newnewS1} and \ref{prop:newS0}, $w_{i,j,2}$ has at most 2 rightward neighbours in $\phi_{i+1}(P_{i+1})$ and no rightward neighbours in $\phi_{i}(P_{i})$, and this is also true for $w_{i,i+1,1}$. 
Thus, in total, we have that property \ref{property_forwardedge_new2} holds.

For Property~\ref{property_mindegree}, note that we have just checked that every vertex in $W$ has at most 2 (rightward) neighbours in $X$ in $\phi_i(P_i)\cup \phi_{i+1}(P_{i+1})$.
Furthermore, by \ref{prop_eil_0}, \ref{prop:groundset}, \ref{prop:newS2} and \ref{prop:newnewS2}, every vertex in $X\setminus (A_i\cup A_{i+1}\cup (X^{\text{high}}\cap \psi_i(V(E_{i,1}))))$ with at least $\frac{n}{100}-2\lambda n$ neighbours in $X$ in $H$ has at most $2$ neighbours in $X$ in each of $\phi_i(P_i)$ and $\phi_{i+1}(P_{i+1})$, and hence at most $\frac{n}{100}+2(i+1)$ neighbours in $X$ in $H_{i+1}$, using~\ref{property_mindegree} for the good sequence $\phi_1,\dots,\phi_{i-1}$. Moreover, each vertex in $(X^{\text{high}}\cap \psi_i(V(E_{i,1})))\setminus A_i$ has the same neighbourhood in $\phi_i(P_i)$ as in $\psi_i(E_{i,1})$, and has at most $2$ neighbours in $X$ in $\phi_{i+1}(P_{i+1})$ by \ref{prop_eil_0} and \ref{prop:newnewS2}, showing that the number of neighbours in $X$ in $H_{i+1}$ of such vertices is at most $\frac{n}{100}+2i$, once again by~\ref{property_mindegree} for the good sequence $\phi_1,\dots,\phi_{i-1}$. Furthermore, each vertex with fewer than $\frac{n}{100}-2\lambda n$ neighbours in $X$ in $H$ clearly has fewer than $\frac{n}{100}$ neighbours in $X$ in $H_{i+1}$ -- in particular, this holds for all vertices in $A_i\cup A_{i+1}$ by \ref{property_mindegree} for the good sequence $\phi_1,\ldots,\phi_{i-1}$.  
So we have proved that each vertex has at most $\frac{n}{100}+2(i+1)$ neighbours in $X$ in $H_{i+1}$.
Finally, by \ref{prop_eil_0}, \ref{prop:groundset}, \ref{prop:newS2} and \ref{prop:newnewS2}, each vertex in $\bigcup_{j\in[r]\setminus[i+1]}A_j$ has at most $2$ neighbours in $X$ in each of $\phi_{i}(P_i)$ and $\phi_{i+1}(P_{i+1})$, and hence, using~\ref{property_mindegree} for the good sequence $\phi_1,\ldots,\phi_{i-1}$, all such vertices have at most $\frac{n}{200}+2(i+1)$ neighbours in $X$ in $H_{i+1}$.
Thus, altogether, we have that Property~\ref{property_mindegree} holds.

Finally, 
\ref{prop:newS0}, \ref{prop:newnewS0}, \ref{prop:newS3} and \ref{prop:newnewS3}, together with~\ref{property_A} for the good sequence $\phi_1,\ldots,\phi_{i-1}$, show that Property~\ref{property_A} holds. Therefore, $\phi_1,\ldots,\phi_{i+1}$ is a good sequence, as required.
  \end{proof}


\section{Replacing star-like trees with stars}\label{section_starlike}
In this section we prove~\cref{theorem_treepacking} from Theorem~\ref{theorem_treepacking_starlikestars} by packing the trees with the `star-like' trees replaced by stars, and then replacing the embedded stars iteratively with embeddings of our actual star-like trees. To embed the star-like trees we use a method inspired by Havet, Reed, Stein and Wood~\cite{havet2020variant}.
This technique is very effective for embedding an $n$-vertex tree $T$ with many leaves into an $n$-vertex dense\footnote{For us, here, missing at most $2\eps n^2$  edges for some small $\eps>0$ (see Lemma~\ref{lemma_onestarlike}).} graph with one `universal' vertex, $v$ say, that is a neighbour of every other vertex. To do this (in a slightly simplified form), let $t\in V(T)$ have maximally many neighbouring leaves in $T$,
and let $L$ be a linear set of leaves containing the neighbouring leaves of $t$, where $t$ has, say, $d$ neighbours in $L$.
Next, we map $t$ to the universal vertex $v$, randomly embed the rest of $T-L$ 
vertex by vertex into $G$, and then attempt to attach the unused vertices in $G$, those in the set $B$ say, as leaves in $G$ of the embedded tree in the correct manner to get a copy of $T$. As $v$ is a universal vertex, every vertex in $B$ could be added in $G$ as a leaf next to the embedding of $t$ (which is $v$), and we have $d$ such leaves to add. We effectively add these leaves last, after adding leaves next to the other vertices that need leaves added, where we thus have $d$ spare vertices in $B$ while doing this. However, we can show that, with positive probability, in the embedding of $T-L$, the correct form of Hall's generalised matching criterion (see \cref{lem:hall}) holds for this to be done. The intuition behind this being true is the following argument. As the number of vertices needing leaves added is at least $|L|/d$, the smaller $d$ is, the more likely it is (or, more precisely, the better our bound on the probability is) for each vertex in $B$ to have many neighbours which need leaves attached, but the fewer spare vertices we have in $B$. On the other hand, the larger $d$, the less likely each vertex in $B$ has many neighbours which need leaves attached, but the more spare vertices we will have. All we need is for the balance between these competing forces to work in our favour in all cases, which we are able to show in our set-up (as we later do for Claim~\ref{clm:new}).

In order to use such an embedding result to iteratively replace our stars, we need to have a little more control in order to embed some leaves onto the lower degree vertices, but this is not a substantial complication. The result we need is the following lemma, after which we use this to prove Theorem~\ref{theorem_treepacking} from Theorem~\ref{theorem_treepacking_starlikestars}.

\begin{restatable}{lemma}{lemmaonestarlike}\label{lemma_onestarlike}
Let $1/n\ll \eps\ll \lambda \leq 1$. Let $T$ be a tree with $n$ vertices and at least $\lambda n$ leaves. Let $G$ be an $n$-vertex graph with $\delta(G)\geq n/10$ and at least $\binom{n}{2}-2\eps n^2$ edges. Let $v\in V(G)$ have degree $n-1$ in $G$.

Then, there is an embedding $\psi:T\to G$ for which the following hold.

\stepcounter{propcounter}
\begin{enumerate}[label = \textbf{\emph{\Alph{propcounter}\arabic{enumi}}}]
		\item Every vertex with degree at most $99n/100$ in $G$ is a leaf of $\psi(T)$.\label{prop_onestar1}
    \item If $\Delta(T)\geq 2n/3$, then the vertex of $T$ with maximum degree is mapped to $v$.\label{prop_onestar2}
	\end{enumerate}
\end{restatable}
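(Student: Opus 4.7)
The plan adapts the Havet, Reed, Stein and Wood technique sketched just before the lemma. Let $B=\{w\in V(G):d_G(w)\leq 99n/100\}$; since $G$ has at most $2\eps n^2$ non-edges, double-counting gives $|B|\cdot (n/100)\leq 4\eps n^2$ and hence $|B|\leq 400\eps n\ll \lambda n$. The goal is to choose a set $L$ of leaves of $T$ with $|L|\geq |B|$, embed $T'=T-L$ into $V(G)\setminus B$ by a random greedy procedure sending some fixed vertex $t$ to the universal vertex $v$, and then apply Hall's theorem to attach the leaves of $L$ to cover the remaining vertices of $V(G)$ (which will contain $B$).

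I would choose $t$ and $L$ as follows. If $\Delta(T)\geq 2n/3$, let $t$ be the (unique) maximum-degree vertex; since each non-leaf neighbour of $t$ forces a distinct further vertex of $V(T)\setminus(\{t\}\cup N_T(t))$, the number of leaf-neighbours of $t$ is at least $2\Delta(T)-n+1\geq n/3+1$, and I would include all of them in $L$. If $\Delta(T)<2n/3$, take $L$ to be an appropriate linear-sized set of leaves of $T$ (using $|\text{leaves}(T)|\geq \lambda n$), and let $t$ be any non-leaf of $T$ with the most neighbours in $L$. Write $T'=T-L$ and $d_u=|N_T(u)\cap L|$ for $u\in V(T')$, so $\sum_u d_u=|L|$; crucially $d_t$ is substantial relative to $|L|$ in both cases.

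Next, set $\psi(t)=v$ and extend to an embedding of $T'$ into $V(G)\setminus B$ by embedding the vertices of $T'$ one by one in a breadth-first order rooted at $t$, each time sending the new vertex to a uniformly random unused neighbour in $G$ of the image of its parent, inside $V(G)\setminus B$. Because every vertex of $V(G)\setminus B$ has $G$-degree at least $99n/100$ and $|L|\gg |B|$, an available neighbour exists at each step. Let $R=V(G)\setminus\psi(V(T'))$, so $|R|=|L|$ and $R\supseteq B$. Finally, I apply~\cref{lem:hall} to the bipartite graph between $V(T')$ and $R$ with edges $uw$ whenever $\psi(u)w\in E(G)$ and with demands $d_u$ at each $u\in V(T')$: a valid assignment extends $\psi$ to an embedding of $T$ with $V(G)\setminus\psi(V(T-L))=R\supseteq B$, so every vertex of $B$ is a leaf of $\psi(T)$ (giving~\ref{prop_onestar1}), and in the high-degree case $\psi(t)=v$ by construction (giving~\ref{prop_onestar2}).

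The main obstacle is verifying the generalised Hall condition: for every $U\subseteq V(T')$,
\[
|N_G(\psi(U))\cap R|\;\geq\;\sum_{u\in U} d_u.
\]
If $t\in U$ this is automatic, since $v$ is universal gives $N_G(\psi(U))\cap R\supseteq R$ and $|R|=|L|\geq\sum_u d_u$. If $t\not\in U$ then $\sum_{u\in U}d_u\leq |L|-d_t$, and the verification splits by $|U|$. For small $U$ the crude bound $\sum_{u\in U}d_u\leq |U|\Delta(T)$ is dominated by $|N_G(\psi(U))\cap R|\geq 99n/100-(n-|R|)-|B|$; for large $U$, double-counting non-edges gives $|N_G(\psi(U))\cap R|\geq |R|-4\eps n^2/|U|$, which beats $|L|-d_t$ once $d_t$ and $|U|$ are large enough. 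For any intermediate range one tracks $|N_G(\psi(U))\cap R|$ as a Doob martingale along the random embedding of $T'$, applies \cref{lem:azuma} for concentration, and union bounds over $U$. Handling this intermediate regime uniformly in $\lambda$ and carefully balancing the size of $L$ against $d_t$ is the bulk of the technical work.
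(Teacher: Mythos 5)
Your overall strategy --- randomly and greedily embed $T-L$ with $t$ at the universal vertex while avoiding low-degree vertices, then attach the leaves $L$ via the generalised Hall criterion --- is the same as the paper's. However, the proposal has two genuine gaps, both located in the part you defer as ``the bulk of the technical work''. The main one is the verification of Hall's condition: a union bound over subsets $U\subseteq V(T')$ ranges over $2^{\Theta(n)}$ sets, while Azuma's inequality for a process with at most $n$ steps of increments bounded by $1$ only gives failure probabilities of the form $\exp(-t^2/2n)$, which cannot reach $2^{-\Theta(n)}$ for any achievable deviation $t\leq n$; so the ``intermediate regime'' is precisely where your argument breaks, and it is not a routine balancing act. (Your ``large $U$'' case is also problematic: $|R|-4\eps n^2/|U|\geq |L|-d_t$ needs $|U|\geq 4\eps n^2/d_t$, which exceeds $n$ whenever $d_t<4\eps n$, e.g.\ for a caterpillar in which no vertex has two leaf-neighbours.) The paper circumvents all of this by proving a \emph{per-vertex} statement (\cref{clm:new}): with probability at least $1/2$, all but at most $d:=\max_u d_u=d_t$ vertices $w$ satisfy $\sum_{i:\psi(t_i)\in N_G(w)}d_i\geq \lambda n/100$. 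This requires only $n$ applications of Azuma, each with failure probability $d/2n$, followed by Markov; the at most $d$ exceptional vertices are then absorbed by the $d$ spare leaves attachable to the universal vertex, and Hall's condition follows from a three-way split on $\sum_{i\in I}d_i$. Even this per-vertex concentration is delicate: one must show $N_G(w)$ is never mostly exhausted during the random embedding (else the conditional probability that $\psi(t_i)\in N_G(w)$ collapses), which the paper handles with a second, stopped submartingale (the processes $Y_i$ and $R_{i,k}$). None of this appears in your sketch.

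The second gap is quantitative: your forbidden set $B=\{w:d_G(w)\leq 99n/100\}$ is too small a set to avoid. In your ``small $U$'' case the bound $|N_G(\psi(U))\cap R|\geq 99n/100-(n-|R|)-|B|=|R|-n/100-|B|$ can be negative, since $|R|=|L|$ need only be about $\lambda n$ and the hypothesis only gives $\eps\ll\lambda\leq 1$, so $\lambda$ may be far smaller than $1/100$ (and is, in the application). The paper introduces an intermediate scale $\mu$ with $\eps\ll\mu\ll\lambda$ and excludes from the image of $T'$ every vertex of degree below $n-\mu n/10$, so that each embedded vertex misses at most $\mu n/10\ll\lambda n\leq|L|$ vertices of the leftover set; you need the same (or an equivalent) device. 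With such a $\mu$ in hand, your case $\Delta(T)\geq 2n/3$ also simplifies as in the paper: when $t$ has at least $\mu n$ leaf-neighbours one takes $L$ to be exactly those leaf-neighbours and attaches all leftover vertices directly to $v$, with no Hall argument needed.
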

\begin{proof}
Let $\mu$ satisfy $\eps\ll \mu \ll \lambda$. Let $t$ be a vertex of $T$ which has the maximal number of neighbours which are leaves in $T$, and let this number of neighbouring leaves be $d$. Note that if $\Delta(T)\geq 2n/3$, then $t$ is the (unique) vertex in $T$ with maximal degree. If $d\geq \mu n$ then let $L$ be the set of neighbouring leaves of $t$ in $T$, and otherwise let $L$ be a set of $\lambda n$ leaves of $T$ which includes all of the neighbouring leaves of $t$.

Let $T'=T-L$ and $m=|T'|$.
Label the vertices of $T'$ as $t_1,\ldots,t_{m}$ so that $t_1=t$ and each vertex (apart from $t_1$) has exactly $1$ leftward neighbour in $T'$ in this ordering. For each $i\in [m]$, let $d_i$ be the number of neighbours of $t_i$ in $L$ in $T$.

Let $U^{\textrm{low}}$ be the set of vertices in $G$ with degree less than $n-\mu n/10$ in $G$, so that $|U^{\textrm{low}}|\cdot \mu n/10\leq 4\eps n^2$, and, hence, $|U^{\textrm{low}}|\leq 40\eps  n/\mu\leq \mu n/10$.
Starting with $\psi(t_1)=v$, for each $2\leq i\leq m$ in turn, embed $t_i$ into $G$ (greedily) at random by letting $s_i$ be the unique neighbour of $t_i$ in $T[\{t_1,\ldots,t_i\}]$ and  selecting $\psi(t_i)$ uniformly at random from $N_G(\psi(s_i))\setminus (U^{\textrm{low}}\cup \psi(\{t_1,\ldots,t_{i-1}\}))$. Note that this is always possible, as $\psi(s_i)\in V(G)\setminus U^{\mathrm{low}}$ so that
\[
|N_G(\psi(s_i))\setminus (U^{\textrm{low}}\cup \psi(\{t_1,\ldots,t_{i-1}\}))|\geq \left(n-\frac{\mu n}{10}\right)-\frac{\mu n}{10}-|T'|= |L|-\frac{\mu n}{5}\geq \frac{4\mu n}{5}.
\]

Now, if $d\geq \mu n$, then we can complete the embedding of $T$ by attaching all of the $d$ vertices in $V(G)\setminus \psi(V(T'))$ as neighbours of $v$ (which has degree $n-1$ in $G$), and we have that both \emph{\ref{prop_onestar1}} and \emph{\ref{prop_onestar2}} hold. Therefore, we can assume that $d< \mu n$, and hence we have $|L|=\lambda n$. We will show the following claim.

\begin{claim} With probability at least $1/2$, there are at least $n-d$ vertices $u\in V(G)$ with
\begin{equation}\label{eqn:dis}
\sum_{i\in [m]:\psi(t_i)\in N_G(u)}d_i\geq  \frac{\lambda n}{100}.
\end{equation}\label{clm:new}
\end{claim}

Given this claim, we can complete the proof, as follows. First, there must by the claim be some embedding $\psi:T'\to G$ such that \eqref{eqn:dis} holds for all but at most $d$ vertices in $V(G)$.
Let $B=V(G)\setminus \operatorname{Im}(\psi)$, so that $|B|=n-(|T|-|L|)=|L|=\lambda n$.
Let $J=\{i\in [m]:d_i>0\}$. We will show that we can apply Hall's generalised matching criterion (see \cref{lem:hall}) to match up the remaining vertices $B$. Thus, consider some non-empty $I\subset J$. If $\sum_{i\in I}d_i\leq \frac{99\lambda n}{100}$, then, picking some arbitrary $j\in I$, as $\psi(t_j)\notin U^{\mathrm{low}}$, we have
\[
\bigg|\bigcup_{i\in I}N_G(\psi(t_i),B)\bigg|\geq |N_G(\psi(t_j),B)| \geq |B|-\frac{\mu n}{10}\geq \frac{99\lambda n}{100}\geq  \sum_{i\in I}d_i.
\]
If $\frac{99\lambda n}{100}< \sum_{i\in I}d_i\leq \lambda n-d$, then,
by the property from the claim, for the $\psi$ as above we have that all but at most $d$ vertices are in $\bigcup_{i\in I}N(\psi(t_i))$. Using this observation and then the assumption $\sum_{i\in I}d_i\leq \lambda n-d$, we get
\[
\bigg|\bigcup_{i\in I}N_G(\psi(t_i),B)\bigg|\geq |B|-d=\lambda n-d\geq \sum_{i\in I}d_i.
\]
Finally, if $\sum_{i\in I}d_i> \lambda n-d$, then as $d_1=d$ we have $1\in I$ so that, as $\psi(t_1)=v$ has degree $n-1$ in $G$,
\[
\bigg|\bigcup_{i\in I}N_G(\psi(t_i),B)\bigg|=|B|=\lambda n\geq \sum_{i\in I}d_i.
\]
Thus, by Hall's generalised matching criterion (see \cref{lem:hall}), we can extend $\psi$ to an embedding of $T$ in $G$. As $U^{\mathrm{low}}\subset \psi(L)$, we have that \emph{\ref{prop_onestar1}} holds, and \emph{\ref{prop_onestar2}} is vacuous as $d\leq \mu n$ implies $\Delta(T)<2n/3$. It is left then only to prove Claim~\ref{clm:new}.

\begin{proof}[Proof of Claim~\ref{clm:new}]
Let $u\in V(G)$. We will show that \eqref{eqn:dis} fails for $u$ with probability at most ${d}/(2n)$, so that the expected number of vertices for which \eqref{eqn:dis} fails is at most $d/2$ and hence the claim follows by Markov's inequality. We may assume that $u\not =v$, as~\eqref{eqn:dis} is immediate for $u=v$ as $v$ will always be a neighbour of each $\psi(t_i)$, $2\leq i\leq m$, and $d_1=d\leq\mu n$.

Thus, fix some $u\in V(G)\setminus\{v\}$, and, for each $i\in [m]$, let
\begin{equation}\label{eqn:Xui}
X_{i}=\left\{\begin{array}{ll}d_i & \text{if $\psi(t_i)\in N_G(u)$\; or \;$|N_G(u)\setminus (U^{\mathrm{low}}\cup \psi(\{t_1,\ldots,t_{i-1}\}))|\leq \frac{n-i+1}{25}$}\\
0 & \text{otherwise}.\end{array}\right.
\end{equation}
As $\psi(t_1)=v\in N_G(u)$, we always have $X_1=d_1=d$. Observe too that, for any $2\leq i\leq m$, as $n-i+1\geq |L|=\lambda n$ and $\lambda\gg \mu$,
\begin{align*}
\PP\left(X_{i}=d_i\;\Big|\; |N_G(u)\setminus (U^{\mathrm{low}}\cup \psi(\{t_1,\ldots,t_{i-1}\}))|\geq \frac{n-i+1}{25}\right)\geq \frac{\frac{n-i+1}{25}-\mu n/10}{n-i+1}\geq \frac{1}{50}
\end{align*}
so that $\PP(X_{i}=d_i \mid \psi(t_1),\ldots,\psi(t_{i-1}))\geq \frac{1}{50}$.
Therefore, as $\sum_{i\in [m]}d_i=|L|=\lambda n$, Azuma's inequality (\cref{{lem:azuma}}) for the submartingale $Z_{j}=\sum_{i\in[j]} (X_{i}-\frac{d_i}{50})$ gives (using $t=\lambda n/100$ and that $Z_m=\sum_{i\in [m]}X_i-\lambda n/50$)
\begin{equation}\label{eqn:sumXi}
\PP\bigg(\sum_{i\in [m]} X_{i}\leq \frac{\lambda n}{100}\bigg)\leq \exp\bigg(-\frac{(\lambda n/100)^2}{2\sum_{i\in [m]}d_i^2}\bigg)\leq\exp\bigg(-\frac{(\lambda n/100)^2}{2d\cdot \sum_{i\in [m]}d_i}\bigg)=  \exp\bigg(-\frac{\lambda n}{20000d}\bigg)\leq \frac{d}{4n},
\end{equation}
where this last inequality follows as $d/n\leq \mu \ll \lambda$.

Now, for each 
$i\in[m]$, let
\begin{equation}\label{eqn:Yi}
Y_{i}=|N_G(u)\setminus \psi(\{t_1,\ldots,t_{i-1}\})|-\frac{n-i+1}{10}.
\end{equation}
Note that, as $\delta(G)\geq n/10$, we have $Y_{1}\geq 0$. Moreover, $Y_{i+1}-Y_{i}\in\{-\frac{9}{10},\frac{1}{10}\}$ for all $i\in[m-1]$, so, in particular, $|Y_{i+1}-Y_{i}|\leq 1$.
We will now show that the probability that $Y_{i}\leq -2\mu n$ for some $i\in[m]$ is at most $\frac{1}{4n}$.

For all  $i,j\in[m]$ with $i<j$, let $\mathcal{B}_{i,j}$ be the event that $Y_{i},Y_{i+1},\dots,Y_{j}\leq -\mu n$, $Y_{i}\geq -\mu n-1$ and $Y_{j}\leq -2\mu n$. Note that if $Y_{j}\leq -2\mu n$, then $\mathcal{B}_{i,j}$ holds for some $i\in[j-1]$. Thus, to prove that there is some $i\in[m]$ with $Y_i\leq -2\mu n$ with probability at most $\frac{1}{4n}$, it is enough to show that $\mathbb{P}(\mathcal{B}_{i,j})\leq \frac{1}{4n^3}$ for all $i,j$. Observe that whenever $i\in[m-1]$ with $Y_i\leq -\mu n$, then (recalling that $s_i$ is the unique neighbour of $t_i$ in $T[\{t_1,\ldots,t_i\}]$) the probability that $\psi(t_i)\in N_G(u)$ is at most
\[
\frac{|N_G(u)\setminus \psi(\{t_1,\ldots,t_{i-1}\})|}
{n-(i-1)-|U^{\mathrm{low}}|-|V(G)\setminus N_G(\psi(s_i))|}\leq \frac{\frac{n-i+1}{10}-\mu n
}{n-(i-1)-\mu n/5}\leq \frac{1}{10}.
\]
Thus, we have 
\begin{equation}\label{eqn_Ymartingale}
\EE(Y_{i+1}\mid \psi(t_1),\dots,\psi(t_{i-1}))\geq Y_i\;\text{ if\; $Y_i\leq -\mu n$}.
\end{equation}
For each $i,k\in [m]$ with $i\leq k$, define $R_{i,k}$ by setting
\begin{equation*}
R_{i,k}=\left\{\begin{array}{ll} Y_k & \text{if $Y_i,Y_{i+1},\dots,Y_k\leq -\mu n$}\\
\max\{Y_i,\dots,Y_k\} & \text{otherwise}.\end{array}\right.
\end{equation*}
Note that we have $\EE(R_{i,k+1}\mid \psi(t_1),\dots,\psi(t_{k-1}))\geq R_{i,k}$ (and hence $\EE(R_{i,k+1}\mid R_{i,i},\dots,R_{i,k})\geq R_{i,k}$) for all $i,k\in[m-1]$ with $i\leq k$. 
Indeed, if $R_{i,k}>-\mu n$ then trivially $R_{i,k+1}\geq R_{i,k}$, whereas if $R_{i,k}\leq -\mu n$ then 
\[\EE(R_{i,k+1}\mid \psi(t_1),\dots,\psi(t_{k-1}))\geq \EE(Y_{k+1}\mid \psi(t_1),\dots,\psi(t_{k-1}))\overset{\eqref{eqn_Ymartingale}}\geq Y_{k}=R_{i,k}.\]
By Azuma's inequality (\cref{{lem:azuma}}) for the submartingale $(R_{i,k})_{k\geq i}$, since $|R_{i,k+1}-R_{i,k}|\leq 1$ for all $k$, we get that, for all $j\in[m]$, with $j>i$, the probability that $R_{i,j}-R_{i,i}\leq -\frac{\mu n}{2} $ is at most $\exp(-(\frac{\mu n}{2})^2/(2m))\leq \frac{1}{4n^3}$ as $m\leq n$. But this immediately implies that $\mathbb{P}(\mathcal{B}_{i,j})\leq \frac{1}{4n^3}$. This finishes the proof that $Y_i\leq-2\mu n$ for some $i\in [m]$ with probability at most $\frac{1}{4n}$.

Therefore, in combination with \eqref{eqn:sumXi}, with probability at least $1-\frac{d}{2n}$, we have that  $|Y_i|\geq -2\mu n$ for each $i\in [m]$ and $\sum_{i\in [m]} X_{i}\geq\frac{\lambda n}{100}$. For each $i\in [m]$, then, by \eqref{eqn:Yi},
we have $|N_G(u)\setminus \psi(\{t_1,\ldots,t_{i-1}\})|-\frac{n-i+1}{10}\geq -2\mu n$, so that
\[
|N_G(u)\setminus (U^{\mathrm{low}}\cup \psi(\{t_1,\ldots,t_{i-1}\}))|\geq \frac{n-i+1}{10}-|U^{\mathrm{low}}|-2\mu n>\frac{n-i+1}{25},
\]
as $|U^\mathrm{low}|\leq \mu n/10$ and $n-i+1\geq \lambda n$. Thus, by \eqref{eqn:Xui}, if $X_{i}=d_i$, then $\psi(t_i)\in N_{G}(u)$. Therefore,
\[
\sum_{i\in [m]:\psi(t_i)\in N_G(u)}d_i= \sum_{i\in [m]}X_{i}\geq  \frac{\lambda n}{100},
\]
so~\eqref{eqn:dis} holds for $u$. Hence, the probability that \eqref{eqn:dis} fails for $u$ is at most $\frac{d}{2n}$, as required.
\renewcommand{\qedsymbol}{$\boxdot$}
\qedhere
\renewcommand{\qedsymbol}{$\square$}
\qedsymbol
\end{proof}
\renewcommand{\qedsymbol}{}
\end{proof}
\renewcommand{\qedsymbol}{$\square$}

Using Lemma~\ref{lemma_onestarlike}, we can now prove \cref{theorem_treepacking} from \cref{theorem_treepacking_starlikestars}.

\begin{proof}[Proof of \cref{theorem_treepacking} from \cref{theorem_treepacking_starlikestars}]
Let $1/n\ll \eps\ll \lambda \ll 1$ and $r=\eps n$. Let $T_1,\ldots,T_r$ be trees with $|T_i|=n-i+1$ for each $i\in [r]$. Let $s$ be the number of these trees with at least $\lambda n$ leaves, and let these trees be $S_1,\ldots,S_s$ (in any order). For each $i\in [s]$, let $S_i'$ be a star with $|S_i|$ vertices. Let $T_1',\ldots,T_r'$ be the sequence of trees $T_1,\ldots,T_r$ with, for each $i\in [s]$, $S_i$ replaced by $S_i'$.
 Then, by Theorem~\ref{theorem_treepacking_starlikestars}, for some ordering $v_1,\ldots,v_n$ of $V(K_n)$ the sequence $T_1',\ldots,T_r'$ packs into $K_n$ so that every vertex which is not the centre of an embedded star has degree at most $n/10$ in the edges used in the embeddings, and the stars are embedded so that the centre is to the left of all its leaves. Relabelling if necessary (using that we did not label the stars in order of size), assume that, for each $i\in [s]$, the centre of the embedding of $S_i$ is $v_i$.
Picking such embeddings, let $H$ be the edges used by the embeddings of all the trees with fewer than $\lambda n$ leaves, and let $G$ be the complement of $H$ in $K_n$. As every vertex in $G$ is either not the centre of an embedded star, or has at least $n-r$ neighbours in $G$ from the embedding of a star, we have $\delta(G)\geq 9n/10$.

Then, to complete the proof it is sufficient to find disjointly in $G$ a copy of $S_i$ for each $i\in [s]$.
Therefore, Theorem~\ref{theorem_treepacking} follows directly from the following claim with $\ell=s$.

\begin{claim} For each $0\leq \ell\leq s$, $S_1,\ldots,S_\ell,S'_{\ell+1},\ldots,S'_{s}$ can be embedded into $G$ so that, letting $G_\ell$ be $G$ with the edges of the embedded copies of $S_1,\ldots,S_\ell$ removed,
\label{claim}
\stepcounter{propcounter}
\begin{enumerate}[label = \textbf{{\Alph{propcounter}\arabic{enumi}}}]
\item for each $\ell<i\leq s$, $v_i$ is the centre of the embedding of $S'_i$, which contains no edges $v_iv_j$ with $\ell<j<i$.  \label{prop_replace1}
\item for each $\ell<i\leq s$, any vertex in the embedding of $S'_i$ has degree at least $n/5-\ell$ in $G_\ell$.\label{prop_replace2}
\end{enumerate}
\end{claim}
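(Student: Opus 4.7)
The plan is to prove Claim~\ref{claim} by induction on $\ell$. The base case $\ell=0$ is directly provided by the embedding constructed just before via Theorem~\ref{theorem_treepacking_starlikestars}: each $S'_i$ is embedded with $v_i$ as its centre and all of its leaves to the right of $v_i$, so no edge $v_iv_j$ with $j<i$ appears in $S'_i$ (establishing~\ref{prop_replace1}), and $\delta(G)\geq 9n/10\geq n/5$ is already enough for~\ref{prop_replace2}.

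For the inductive step from $\ell$ to $\ell+1$, I would keep the existing embeddings of $S_1,\dots,S_\ell$ and $S'_{\ell+2},\dots,S'_s$ unchanged, and re-embed $S_{\ell+1}$ in place of $S'_{\ell+1}$. Let $V'=V(S'_{\ell+1})$ and $n'=|V'|=|S_{\ell+1}|$, and take $G_\ell[V']$ as the host graph. By edge-disjointness at step $\ell$, every edge of $S'_{\ell+1}$ still lies in $G_\ell$, so $v_{\ell+1}$ is universal (has degree $n'-1$) in $G_\ell[V']$. Combining~\ref{prop_replace2} at step $\ell$ with $n-n'\leq r$ and $\ell\leq r$ gives $\delta(G_\ell[V'])\geq n'/10$, while the total edge deficit $\binom{n'}{2}-e(G_\ell[V'])$ is bounded by the number of non-edges of $G$ plus the edges of $S_1,\dots,S_\ell$, which together is $O(\eps)n^2=O(\eps)(n')^2$. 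Applying Lemma~\ref{lemma_onestarlike} then produces an embedding $\psi\colon S_{\ell+1}\to G_\ell[V']$ using $v_{\ell+1}$ as the designated universal vertex. Edge-disjointness of $\psi(S_{\ell+1})$ from $S'_{\ell+2},\dots,S'_s$ is automatic, since~\ref{prop_replace1} at step $\ell$ forces $v_i\notin V'$ for every $i>\ell+1$, so no edge of any such $S'_i$ has both endpoints in $V'$.

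Property~\ref{prop_replace1} at step $\ell+1$ is directly inherited from step $\ell$. For~\ref{prop_replace2}, fix $u\in V(S'_i)$ with $i>\ell+1$. If $u\notin V'$, then $u\notin\mathrm{Im}(\psi)$ and $d_{G_{\ell+1}}(u)=d_{G_\ell}(u)\geq n/5-\ell\geq n/5-(\ell+1)$. If $u\in V'$, then (using $v_i\notin V'$) $u$ is a common leaf of both $S'_{\ell+1}$ and $S'_i$, and the requirement becomes that $u$ is a leaf of $\psi(S_{\ell+1})$, so only one edge of the new embedding is added at $u$.

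The main obstacle lies in this common-leaf case: Lemma~\ref{lemma_onestarlike} only forces vertices of degree at most $99n'/100$ in the host graph to be leaves, which is not a priori guaranteed for such a $u\in V'\cap V(S'_i)$. I expect to handle this by slightly modifying the host graph before invoking Lemma~\ref{lemma_onestarlike}: for each common-leaf vertex $u$, pre-remove a controlled number of edges at $u$ in $G_\ell[V']$ to drop $d_{G_\ell[V']}(u)$ below the $99n'/100$ threshold, so that the leaf-forcing clause activates. The $O(\eps)(n')^2$ slack in the edge-count hypothesis of Lemma~\ref{lemma_onestarlike} can absorb these modifications, since the maximum-degree bound $n/10$ from Theorem~\ref{theorem_treepacking_starlikestars} controls the total number of common-leaf incidences that need to be adjusted.
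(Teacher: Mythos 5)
Your overall frame (induction on $\ell$, re-embedding $S_{\ell+1}$ into $G_\ell[V(S'_{\ell+1})]$ via Lemma~\ref{lemma_onestarlike} with $v_{\ell+1}$ as the universal vertex) is the same as the paper's, and your base case and the bounds $\delta(G_\ell[V'])\geq n'/10$ and on the edge deficit are fine. But the step you declare ``automatic'' is exactly where the real work lies, and your justification for it is backwards. Property~\ref{prop_replace1} at step $\ell$ says that each \emph{later} star $S_i'$ contains no edge $v_iv_j$ with $\ell<j<i$, i.e.\ that earlier-indexed centres such as $v_{\ell+1}$ are not leaves of $S_i'$ for $i>\ell+1$. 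It says nothing about whether $v_i$ with $i>\ell+1$ is a leaf of $S'_{\ell+1}$ --- and in general it is: after the initial application of Theorem~\ref{theorem_treepacking_starlikestars} every star has all its leaves to the right of its centre, so the later centres $v_{\ell+2},\dots,v_s$ typically all lie in $V'=V(S'_{\ell+1})$. Consequently the new embedding of $S_{\ell+1}$ may place edges $v_iu$ with $u\in L_i$, i.e.\ edges of $S_i'$, and edge-disjointness genuinely fails. This is precisely what drives the paper's two-case analysis on $\Delta(S_\ell)$: when $\Delta(S_\ell)\geq 3n/4$ the centres $v_{\ell+1},\dots,v_s$ are deleted from the host graph and re-attached only as leaves of $v_\ell$ (the edge $v_\ell v_i$ is available and conflict-free by the inductive hypothesis); when $\Delta(S_\ell)<3n/4$ they are forced to become leaves of the new embedding by deleting $n/10$ edges at each of these (at most $r=\eps n$) vertices, and if the unique new edge $f_{\ell,i}$ at $v_i$ collides with $S_i'$, that edge of $S_i'$ is \emph{replaced} by $v_\ell v_i$. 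This last device is also why~\ref{prop_replace1} is stated with threshold $\ell<j$ rather than $1\leq j$: the invariant must be weakened at each step to admit the swapped edge $v_iv_\ell$. Your proposal keeps the stars $S_i'$ untouched and so has no mechanism to resolve these collisions.

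Your fix for the ``common-leaf case'' is also quantitatively unsound and aimed at the wrong vertices. Essentially every vertex of $V'$ is a leaf of essentially every star $S_i'$ with $i>\ell+1$ (each such star misses only $O(r)$ vertices of $K_n$), so driving all such vertices below the $99n'/100$ threshold would require removing about $n'/100$ edges at $\Omega(n)$ vertices, i.e.\ $\Omega(n^2)$ edges, which the $O(\eps n^2)$ slack in Lemma~\ref{lemma_onestarlike} cannot absorb; the degree bound $n/10$ from Theorem~\ref{theorem_treepacking_starlikestars} bounds how many stars a given vertex is a leaf of, not how many vertices of $V'$ are leaves of some later star. In fact no surgery is needed at ordinary common leaves: if such a $u$ is not made a leaf, then by \ref{prop_onestar1} it had degree above $99n'/100$ in the host, and the case split on $\Delta(S_{\ell+1})$ (together with routing the maximum-degree vertex to $v_{\ell+1}$ when $\Delta(S_{\ell+1})\geq 3n/4$) bounds the degree it can receive well below $99n/100-n/5$, so~\ref{prop_replace2} survives by a direct computation. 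The genuine obstruction is the set of star centres $v_i\in V'$ with $i>\ell+1$, which your argument does not address.
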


\begin{proof}[Proof of Claim~\ref{claim}] We prove this by induction on $\ell$. When $\ell=0$, we have the embedding of the stars required, so suppose that $\ell\in [s]$ and we have embeddings of $S_1,\ldots,S_{\ell-1},S'_{\ell},\ldots,S'_s$ into $G$ so that, letting $G_{\ell-1}$ be $G$ with the edges of the embedded copies of $S_1,\ldots,S_{\ell-1}$ removed, any vertex in the embedding of $S'_i$ has degree at least $n/5-(\ell-1)$ in $G_{\ell-1}$ for each $\ell-1<i\leq s$. Suppose furthermore that $v_\ell,\ldots,v_{s}$ are the centres and $L_\ell,\ldots,L_s$ are the sets of leaves of the embedded copies of $S'_{\ell},\ldots,S'_s$ respectively, and no $S_i'$ contains an edge of the form $v_iv_j$ with $\ell-1<j<i$. We consider two cases according to the value of $\Delta(S_\ell)$.\medskip


	\textbf{Case 1: $\Delta(S_\ell)\geq 3n/4$.}
  Let $m=|L_\ell\cap \{v_{\ell+1},\ldots,v_{s}\}|\leq r$ and $L_\ell^-=L_\ell\setminus \{v_{\ell+1},\ldots,v_s\}$. Let $t_\ell$ be the vertex of $S_\ell$ with maximum degree, and let $S_\ell^-$ be a copy of $S_\ell$ with $m$ leaves removed next to $t_\ell$. Note that $|L_\ell^-\cup \{v_\ell\}|=|S_\ell^-|$ and that the embeddings of $S_{\ell+1}',\ldots,S_s'$ are disjoint from $G_{\ell-1}[L_\ell^-\cup \{v_\ell\}]$. Furthermore, as every vertex in $G_{\ell-1}[L_\ell^-\cup \{v_\ell\}]$ is in the embedding of $S_\ell'$, we have (by the version of \ref{prop_replace2}) that $\delta(G_{\ell-1}[L_\ell^-\cup \{v_\ell\}])\geq n/5-(\ell-1)-(n-|S_\ell|)-r\geq n/10$, while, as the embedding of $S_\ell'$ is in $G_{\ell-1}$, we have that $v_\ell$ is a neighbour of all of the other vertices in $G_{\ell-1}[L_\ell^-\cup \{v_\ell\}]$. Moreover, $G_{\ell-1}[L_\ell^-\cup \{v_\ell\}]$ is a graph with $n':=|S_\ell^-|\geq n-(n-|S_\ell|)-r\geq n-2r>99n/100$ vertices and at least $\binom{n'}{2}-rn\geq \binom{n'}{2}-2\eps n'^2$ edges.

  By Lemma~\ref{lemma_onestarlike}, then, there is an embedding of $S_{\ell}^-$ in $G[L_\ell^-\cup \{v_\ell\}]$ in which $t_\ell$ is embedded to $v_\ell$ and in which every vertex with degree at most $49n/50$ in $G_{\ell-1}[L_\ell^-\cup \{v_\ell\}]$ is a leaf of the embedded tree.
  Extend this to an embedding of $S_\ell$ by adding the $m$ vertices in $L_\ell\cap \{v_{\ell+1},\ldots,v_{s}\}$ as leaves of $t_\ell$. Let $G_{\ell}$ be $G$ with the edges of the embedded copies of $S_1,\ldots,S_\ell$ removed, so that it is $G_{\ell-1}$ with the edges of the embedded copy of $S_\ell$ removed.

Now, no embedded star $S_i'$ with $i>\ell$ contains $v_\ell$ (as the embedded $S_i'$ is centred at $v_i$ and contains no edge of the form $v_iv_j$ with $\ell-1<j<i$), so for each $i>\ell$, every vertex in the embedding of $S_\ell$ which is in the embedded $S'_i$ has degree at most $n-\Delta(S_\ell)\leq n/4$ in the embedding of $S_\ell$. As every vertex with degree at most $49n/50$ in $G_{\ell-1}[L_\ell^-\cup \{v_\ell\}]$ is a leaf of the embedding in $S_\ell$, we therefore have that, whenever $v$ is in the embedded $S_\ell$ and also a non-centre vertex of $S_i'$ for some $i>\ell$,
$v$ has degree in $G_{\ell}[L_\ell^-\cup \{v_\ell\}]$ at least $\min \{d_{G_{\ell-1}[L_\ell^-\cup \{v_\ell\}]}(v)-1,49n/50-n/4\}$. Therefore, for all $i>\ell$, every vertex of $S_i'$ has degree in $G_\ell$ at least
\[
\min \{d_{G_{\ell-1}}(v)-1,24n/50\}\geq n/5-\ell,
\]
so that \ref{prop_replace2} holds. Moreover, \ref{prop_replace1} holds as the embeddings of $S'_i$, $\ell<i\leq s$, are unchanged.

\medskip

\textbf{Case 2: $\Delta(S_\ell)<3n/4$.} Let $G_{\ell-1}^-$ be $G_{\ell-1}$ with $n/10$ rightward edges arbitrarily removed next to each $v_i$, $\ell<i\leq s$ (to force a leaf onto each of these vertices in the application of Lemma~\ref{lemma_onestarlike} which will follow).
Writing $L_\ell^-= L_\ell\setminus \{v_{\ell+1},\ldots,v_s\}$, for each vertex $v\in L_\ell^-$ we have $d_{G_{\ell-1}^-}(v,L_\ell^-\cup \{v_\ell\})\geq \frac{n}{5}-\ell-(s-\ell)-r\geq n/10$. Furthermore, for each $\ell<i\leq s$ we have $d_{G_{\ell-1}^-}(v_i,L_\ell^-\cup \{v_\ell\})\geq |L_\ell^-\cap L_i|-n/10\geq n-3r-n/10\geq n/10$. Finally, again we have that, as the embedding of $S_\ell'$ is in $G_{\ell-1}$, $v_\ell$ is a neighbour of every other vertex in $G^-_{\ell-1}[L_\ell\cup \{v_\ell\}]$, and that $G^-_{\ell-1}[L_\ell\cup \{v_\ell\}]$ is a graph on $n'\geq 99n/100$ vertices with at least $\binom{n'}{2}-rn-rn/10\geq \binom{n'}{2}-2\eps n'^2$ edges.

Thus, by Lemma~\ref{lemma_onestarlike}, there is an embedding of $S_{\ell}$ in $G^-_{\ell-1}[L_\ell\cup \{v_\ell\}]$ in which every vertex with degree at most $49n/50$ in $G_{\ell-1}^-[L_\ell\cup \{v_\ell\}]$ is a leaf of the embedded tree. Note that, as they have degree at most $9n/10$ in $G_{\ell-1}^-[L_\ell\cup \{v_\ell\}]$, whenever $\ell<i\leq s$ and $v_i\in L_\ell$, then $v_i$ is a leaf in the embedding of $S_\ell$, and so $v_i$ appears in a unique edge $f_{\ell,i}$ of the embedded $S_\ell$.
When this happens, and when we also have $f_{\ell,i}$ appearing in our embedding of $S_i'$, take the embedding of $S_{i}'$ and replace the edge $f_{\ell,i}$ by $v_\ell v_i$ (which is in $G_{\ell-1}$ but is not present in the embedding of $S_i'$ by our assumptions, so not equal to $f_{\ell,i}$, and hence not in the embedding of $S_\ell$ as $f_{\ell,i}$ is the unique such edge containing $v_i$). In other cases (i.e., if $\ell<i\leq s$ and either $v_i\not \in L_\ell$ or $f_{\ell,i}$ does not appear in the embedding of $S_i'$), we leave $S_i'$ unchanged.
Thus, we get disjoint embeddings of $S_1,\ldots,S_\ell,S'_{\ell+1},\ldots,S'_{s}$. Furthermore, we have that \ref{prop_replace1} holds as we only added to the embeddings of $S'_{\ell+1},\ldots,S'_{s}$ some edges containing $v_\ell$.

Let $G_{\ell}$ be $G$ with the edges of the embedded copies of $S_1,\ldots,S_\ell$ removed, so that it is $G_{\ell-1}$ with the edges of the embedded copy of $S_\ell$ removed. Similarly to Case 1, for each $\ell<i\leq s$, any vertex in the embedding of $S'_i$ has degree at least $n/5-\ell$ in $G_\ell$, except for (in this case only) possibly $v_\ell$ which may now be in the embedding of $S'_i$. However, $v_\ell$ had at least $|S_\ell|-1\geq n-r-1$ neighbours in $S_{\ell-1}$, and so it has at least $n-r-1-\Delta(S_\ell)\geq n/5$ neighbours in $G_\ell$. Thus, \ref{prop_replace2} holds. This completes the proof of the claim, and hence the lemma.
\renewcommand{\qedsymbol}{$\boxdot$}
\qedhere
\renewcommand{\qedsymbol}{$\square$}
\qedsymbol
\end{proof}
\renewcommand{\qedsymbol}{}
\end{proof}
\renewcommand{\qedsymbol}{$\square$}


\section{Embedding the path-like trees: proof of the key lemmas}\label{section_ends}
In this section, we prove our two key lemmas,~\cref{lemma_forends} and~\cref{lemma_finalembedding}. We begin by formally defining an `end'. For convenience, and in a slight change from the sketch, essentially an end of a tree $T$ is a subtree $T'$ with at most 2 connector vertices in $T$ (vertices $v$ with $d_{T'}(v)<d_T(v)$). Then, using the notation of Sections~\ref{sec:sketch} and~\ref{section_mainembeddingscheme}, $T'$ is moreover a `proper' end if it can be embedded in $K_n$ with connector vertices in $X$ so that plenty of its vertices are used to cover vertices in $W$ without using a `turnaround' vertex. Where a turnaround vertex is needed (that is, when too many vertices of $T'$ lie on the path between connector vertices in $T'$), $T'$ is then an `artificial' end. Within the definition we additionally record two parameters: $k$ will be number of vertices we can easily cover in $W$ while embedding $T'$ in such a fashion, while $\ell$ is the number of non-connector leaves of $T'$. We now make these definitions precisely.

\begin{definition}\label{defn:ends}
	Let $T$ be a tree, and let $T'$ be a (proper) subtree of $T$ with two not necessarily distinct connector vertices $u$ and $v$, and no other connector vertices other than $u$ and $v$. If $k\geq 0$ satisfies $|T'|\geq 20000k$ and $\ell$ is the number of leaves of $T'$ distinct from both $u$ and $v$, then \emph{$T'$ is a $(k,\ell)$-end of $T$ (with connector vertices $u,v$)}.
	Moreover, letting $P_{u,v}$ be the unique path in $T'$ between $u$ and $v$, if $|P_{u,v}|\leq \frac{99}{100}|T'|$, then $T'$ is a \emph{proper $(k,\ell)$-end}, while otherwise it is an \emph{artificial $(k,\ell)$-end}. Note that if $|T'|\geq 2$ and $T'$ is an artificial end, then, necessarily, $u\neq v$.
\end{definition}

In Section~\ref{section_genuineend}, we will prove the first of the key lemmas (\cref{lemma_forends}), before building to the proof of the second key lemma (\cref{lemma_finalembedding}) through the rest of the section, using several intermediate results. To do this, in Section~\ref{section_key2}, we give a result embedding a sufficiently large end to cover `most' vertices in one of the sets  $W_i^-$ in the statement of~\cref{lemma_finalembedding}, leaving only a few gaps and making sure that~\itref{prop:S1} is satisfied for the vertices covered. Then, in Section~\ref{section_key5}, we show that we can embed a subtree while using its leaves to cover the gaps left by the previous step. In Section~\ref{section_key6},
we give a result which we will use to embed the final piece of our path-like tree, making sure that we cover a certain subset with well-separated degree $2$ vertices (so that~\itref{prop:S2} and~\itref{prop:S3} can be satisfied). Finally, we put this all together in Section~\ref{section_key7} to prove~\cref{lemma_finalembedding}.


\subsection{Proof of the first key lemma}\label{section_genuineend}
In the proof of the first key lemma, and other results in this section, we often wish to embed part of a tree greedily. It is convenient, then, to record when this can be done, which we do in the following result.
\begin{lemma}\label{lemma_greedy}
    Let $G$ be a graph with complement $H$, and let $T$ be a tree. Let $U_1\subset U_2\subset V(T)$, and let $\kappa$ be a positive integer such that $T[U_1]$ has at most $\kappa$ connected components. Assume furthermore that $|U_2|+\kappa\cdot\Delta(H)\leq |G|$. Then, every embedding of $T[U_1]$ in $G$ extends to an embedding of $T[U_2]$ in $G$.
\end{lemma}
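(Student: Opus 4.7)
The plan is to extend the given embedding of $T[U_1]$ one vertex at a time. Writing $U'$ for the current set of embedded vertices (starting at $U_1$), I apply the greedy rule: if some $v\in U_2\setminus U'$ has a $T$-neighbour in $U'$, pick such a $v$; otherwise pick any $v\in U_2\setminus U'$. Letting $c_v$ denote the number of $T$-neighbours of $v$ in $U'$, placing $\phi(v)$ requires a vertex in the common $G$-neighbourhood of the already-placed images of those $c_v$ neighbours, avoiding the $|U'|\le |U_2|-1$ used vertices; at least
$|G|-(|U_2|-1)-c_v\Delta(H)\ge 1+(\kappa-c_v)\Delta(H)$
such vertices exist by the hypothesis $|U_2|+\kappa\Delta(H)\le |G|$, so the step succeeds whenever $c_v\le\kappa$.

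The whole argument thus reduces to verifying $c_v\le\kappa$ at every greedy step. The key point is that, since $T$ is a tree, $v$ has at most one $T$-neighbour in each connected component of $T[U']$ (two such neighbours would complete a cycle via the unique $T[U']$-path between them). Moreover any $T$-neighbour of $v$ lying in $U_2$ belongs to the same component $K$ of $T[U_2]$ as $v$, so $c_v$ is bounded by the number of connected components of $T[U'\cap K]$.

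The main task is then to show, by induction on the greedy steps, the invariant that for every component $K$ of $T[U_2]$ the forest $T[U'\cap K]$ has at most $\max\{s_K,1\}\le\kappa$ components, where $s_K$ is the number of components of $T[U_1\cap K]$. This holds initially, as $\sum_K s_K\le\kappa$. For the inductive step, a greedy addition of $v$ inside $K$ is either a seeding (with $U'\cap K=\emptyset$, contributing a single new component) or an extension with $c_v\ge 1$. Indeed, if $\emptyset\ne U'\cap K\subsetneq K$, then the subtree-connectedness of $K$ guarantees a vertex of $K\setminus U'$ that is $T$-adjacent to $U'\cap K$, so greedy's ``if'' branch applies; and whenever the picked vertex lies in $K$, it has $c_v\ge 1$, merging $c_v$ components of $T[U'\cap K]$ into one and therefore never increasing the count. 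The main obstacle I expect is coordinating the greedy rule across components at once, and in particular ensuring that a $c_v=0$ ``seeding'' step inside some $K$ is only taken when $U'\cap K=\emptyset$ (which is automatic from the greedy preference for $c_v\ge 1$); this is precisely what preserves the invariant and hence yields $c_v\le\kappa$ at every step, completing the extension.
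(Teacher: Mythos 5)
Your proposal is correct and takes essentially the same approach as the paper: a vertex-by-vertex greedy embedding in which the key point is that each newly embedded vertex has at most $\kappa$ already-embedded $T$-neighbours (at most one per component, by the tree property), so at most $\kappa\cdot\Delta(H)+|U_2|-1<|G|$ vertices are forbidden. The only difference is bookkeeping: the paper obtains the required ordering of $U_2\setminus U_1$ by first ordering all of $V(T)\setminus U_1$ connectedly and passing to a subsequence, whereas you maintain a per-component invariant directly on $U_2$; both are valid.
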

\begin{proof}
    List the vertices of $V(T)\setminus U_1$ as $t_1,\dots,t_m$ in such a way that, for all $i\in[m]$, we have $t_i\in N_T(U_1\cup\{t_1,\dots,t_{i-1}\})$ (if $U_1=\emptyset$, then we only require this to hold for all $i\in [m]\setminus\{1\}$). Then, for all $i\in[m]$, $T[U_1\cup\{t_1,\dots,t_i\}]$ has at most $\kappa$ connected components. Hence, for all $i\in[m]$, $t_i$ has at most $\kappa$ neighbours in $T$ in $U_1\cup\{t_1,\dots,t_{i-1}\}$.
    
    Thus, by taking a subsequence of $t_1,\dots,t_m$, we can list the vertices in $U_2\setminus U_1$ as $s_1,\dots,s_q$ such that, for all $i\in[q]$, $s_i$ has at most $\kappa$ neighbours in $U_1\cup\{s_1,\dots,s_{i-1}\}$.
    Then, we can greedily embed the vertices of $U_2\setminus U_1$ in this order. Indeed, if we have an embedding $\phi$ of $T[U_1\cup\{s_1,\dots,s_{i-1}\}]$ in $G$, then, writing $Y=N_T(s_i)\cap (U_1\cup\{s_1,\dots,s_{i-1}\})$, we can pick $\phi(s_i)$ to be any element of the set $V(G)\setminus(\bigcup_{y\in Y}N_H(\phi(y))\cup \phi(U_1\cup\{s_1,\dots,s_{i-1}\}))$, which has size at least $|G|-\kappa\cdot\Delta(H)-(|U_2|-1)>0$.    
\end{proof}

We can now prove~\cref{lemma_forends}, which is used in our main proof to embed the `ends' $E_{i,1},E_{i,2}$ -- which are subtrees of $T_i$ with only one connector vertex in $T_i$. First, we restate the lemma, for convenience.


\lemmaforends*

\begin{proof}
	Let $a\in A^-$ be given. If $W^-=\emptyset$, then a greedy algorithm (i.e., \cref{lemma_greedy} with $U_1=\{t\}$, $U_2=V(T)$ and $\kappa=1$, as $|X\setminus X^\text{forb}|\geq n-r-n/100$, $\Delta(H[X\setminus X^\text{forb}])\leq n/50$ and $|T|\leq 4\eps n$) shows that $T$ embeds in $G[X\setminus X^\text{forb}]$ with $t$ mapping to $a$. Thus, from now on, we may assume that $W^{-}\not =\emptyset$.
 
 By~\cref{lemma_genuineenddivision}, as $|T|>|W^-|$, there is an integer $k\geq 2$ and a tree decomposition $T_1,\dots,T_k$ of $T$ such that $t\in V(T_k)$, $|T_k|=|T|-|W^-|$, $|T_1|-1\geq |W^-|/2$, and, for each $i\in[k-1]$, $T_i$ contains exactly one connector. 
By merging $T_i$ and $T_j$ for all $i,j\in [k-1]$ if they share a vertex, we may assume that $T_1,\dots,T_{k-1}$ are vertex-disjoint, and note that we can assume each of these trees has at least one edge. We will essentially show that if we replace each $T_i$ ($i\in [k-1]$) with a star of the same size, centred at its connector vertex, then we can embed this new tree, $T'$ say, in $G$ such that $W^-$ is covered by the leaves of these stars, and then use this observation to deduce the statement of the lemma by putting back the trees $T_i$ in place of the stars.
 Thus, for each $i\in[k-1]$, let $s_i$ be the unique connector of $T_i$, and 
 note that $s_i\in V(T_k)$. 
 For each $i\in [k-1]$, let $d_{i}=|T_i|-1$. Note that 
 $\sum_{i\in [k-1]} d_i=|W^-|$ and $d_1\geq |W^-|/2$.

	Let $X_0=\{v\in X\setminus X^{\text{forb}}:d_G(v,W^-)\geq |W^-|/2\}=\{v\in X\setminus X^{\text{forb}}:d_H(v,W^-)\leq |W^-|/2\}$. Note that $e_H(W^-,X\setminus X^{\text{forb}})\leq |W^-|\cdot n/50$ as $\Delta(H)\leq n/50$, so $|(X\setminus X^{\text{forb}})\setminus X_0|\leq \frac{|W^-|\cdot n/50}{|W^-|/2}=n/25$, and hence  $|X_0|\geq |X|-|X^{\text{forb}}|-n/25\geq 23n/25$. Note that $A^-\subset X_0$. Take any embedding $\phi_0$ of $T_k[\{t,s_1\}]$ in $G[A^-]$ with $\phi_0(t)=a$. We can extend this embedding greedily (by~\cref{lemma_greedy} with $U_1=\{t,s_1\}$ and $U_2=V(T_k)$, using that $\Delta(H[X_0])\leq n/50$, $|X_0|\geq 23n/25$ and $|U_2|\leq 4\eps n$) to get an embedding $\phi_0$ of $T_k$ in $G[X_0]$ such that $\phi_0(t)=a$ and $\phi_0(s_1)\in A^-$.

	We claim that we can find a partition $W^-=\cup_{i\in [k-1]}W_i^-$ such that, for all $i\in [k-1]$, $W_i^-\subseteq N_G(\phi_0(s_i))$ and $|W_i^-|=d_i$. We show this using Hall's generalised matching criterion (see \cref{lem:hall}). For this, take any non-empty $I\subseteq [k-1]$. If $\sum_{i\in I}d_i\leq |W^-|/2$, then the definition of $X_0$ immediately gives that $|N_G(\{\phi_0(s_i):i\in I\},W^-)|\geq |W^-|/2\geq \sum_{i\in I}d_i$. On the other hand, if $\sum_{i\in I}d_i>|W^-|/2$, then necessarily $1\in I$, and hence, using $\phi_0(s_1)\in A^-$, we have $N_G(\{\phi_0(s_i):i\in I\},W^-)=W^-$. So, by~\cref{lem:hall}, such a partition does indeed exist.

    For each $i\in[k-1]$, order the vertices of $T_i$ as $t_{i,1},\dots,t_{i,d_i+1}$ such that $t_{i,d_i+1}=s_i$ and each vertex has at most $1$ rightward neighbour. Using that $W_i^-\cup\{\phi_0(s_i)\}$ induces a complete subgraph of $G$ as $W_i^-\subset N_G(\phi_0(s_i))$, we can extend $\phi_0$ to an embedding $\phi$ of $T$ in $G[W^-\cup X_0]$ by mapping, for each $i\in[k-1]$, $t_{i,1},\dots,t_{i,d_i}$ to $W_i^-$ in an order-preserving way. This embedding $\phi$ then satisfies the conditions.
\end{proof}


\subsection{Embedding ends}\label{section_key2}
It remains to prove the other key lemma,~\cref{lemma_finalembedding}. In this section, we will prove the following result, which states that we can use a sufficiently large end to appropriately cover most of the vertices in one of the sets $W_i^-$ in~\cref{lemma_finalembedding} (while, as required by~\itref{prop:S1}, having at most one rightward edge at all points of $W_i^-$, except at the leftmost vertex). The number of gaps left by the embedding will be a small fraction of the number of (non-connector) leaves of the end that we embed.

\begin{restatable}{lemma}{lemmaeitherend}\label{lemma_eitherend} Let $1/n\ll \eps\ll \lambda \ll 1$ and $r=\eps n$.
Let $v_1,\ldots,v_n$ be an ordering of the vertices of $K_n$ and let $W=\{v_1,\ldots,v_r\}$ and $X=\{v_{r+1},\ldots,v_{n}\}$. Let $G\subset K_n$ have $\delta(G)\geq 49n/50$, and let $H$ be the complement of $G$ in $K_n$. Suppose that $e(H)\leq \eps n^2$.

Let $W^-\subset W$, and let $X^-\subset X$ contain at least $5n/6$ vertices. Let $T$ be a tree and let $S$ be a $(36|W^-|,\ell)$-end of $T$ for some $\ell$, with $|S|\leq n/50$. Suppose that 
each vertex in $W^-$ has at most $2\lambda n$ edges in $H$,
$H[W^-]$ is $2$-colourable, and the leftmost vertex of $W^-$ is an isolated vertex in $H[W^-]$ (or $W^- =\emptyset$).

  Then, if $U\subset V(S)$ is a subset of the (at most two) connector vertices of $S$ in $T$, then any embedding of $S[U]$ in $G[X^-]$ extends to an embedding of $S$ in $G[W^-\cup X^-]$, such that, if $S'$ is the embedded copy of $S$, then the following conditions are satisfied.
  \stepcounter{propcounter}
  \begin{enumerate}[label = \textbf{\emph{\Alph{propcounter}\arabic{enumi}}}]
  \item Each vertex of $S'$ in $W^-$ has at most $1$ rightward neighbour in $S'$, unless it is the leftmost vertex in $W^-$, when it has at most $2$ rightward neighbours in $S'$.\label{prop:Snewnew1}
		\item There are at most $\frac{\ell}{100}$ vertices in $W^-$ which are not in $S'$.\label{prop:Snewnew2}
    \end{enumerate} 
\end{restatable}

As a first step, we will consider proper ends, and show in the following lemma that if $W^-\subset W$ induces a complete subgraph of $G$, then a proper $(|W^-|,\ell)$-end can be used to cover $W^-$ with few (at most $\frac{\ell-1}{100}$) gaps while having at most one rightward neighbour at each point of $W^-$. This lemma will then be used to deduce~\cref{lemma_eitherend} for artificial ends (by viewing the artificial end as two proper ends glued together, see \cref{lemma_artificialend}), and finally for both end types (again, by using two proper sub-ends to cover the two colour classes of $H[W^-]$).

\begin{restatable}{lemma}{lemmaproperend}\label{lemma_properend} Let $1/n\ll \eps\ll \lambda \ll 1$ and $r=\eps n$.
Let $v_1,\ldots,v_n$ be an ordering of the vertices of $K_n$ and let $W=\{v_1,\ldots,v_r\}$ and $X=\{v_{r+1},\ldots,v_{n}\}$. Let $G\subset K_n$ have $\delta(G)\geq 49n/50$, and let $H$ be the complement of $G$ in $K_n$. Suppose that $e(H)\leq \eps n^2$.

Let $W^-\subset W$, and let $X^-\subset X$ contain at least $3n/4$ vertices. Let $T$ be a tree, and let $S$ be a proper $(|W^-|,\ell)$-end of $T$ for some $\ell$, with $|S|\leq n/2$. Suppose that 
each vertex in $W^-$ has at most $2\lambda n$ edges in $H$, and $H[W^-]$ has no edges.


  Then, if $U\subset V(S)$ is a subset of the (at most two) connector vertices of $S$ in $T$, then any embedding of $S[U]$ in $G[X^-]$ extends to an embedding of $S$ in $G[W^-\cup X^-]$, so that, if $S'$ is the embedded copy of $S$, then the following conditions are satisfied.
  \stepcounter{propcounter}
  \begin{enumerate}[label = \textbf{\emph{\Alph{propcounter}\arabic{enumi}}}]
  \item Each vertex of $S'$ in $W^-$ has at most 1 rightward neighbour in $S'$.\label{prop:Snew1}
		\item There are at most $\frac{\ell-1}{100}$ vertices in $W^-$ which are not in $V(S')$.\label{prop:Snew2}
    \end{enumerate}
		Moreover, if $W^-\not =\emptyset$, and $t$ is a leaf of $S$ which is not a connector vertex, or adjacent to one, then such an embedding exists in which $t$ is embedded to the leftmost vertex of $W^-$.
\end{restatable}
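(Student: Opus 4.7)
If $W^- = \emptyset$, we greedily extend $\phi_0$ to an embedding of $S$ into $G[X^-]$ using Lemma~\ref{lemma_greedy}, whose hypotheses are satisfied because $|S| \leq n/2$, $|X^-| \geq 3n/4$, $\Delta(H) \leq n/50$, and $|U| \leq 2$; conditions~\ref{prop:Snew1} and~\ref{prop:Snew2} are then vacuous. Henceforth assume $W^- \neq \emptyset$ and set $m = |W^-|$; let $u, v$ denote the (possibly equal) connector vertices of $S$ in $T$. Following the strategy of Lemma~\ref{lemma_forends}, we apply Lemma~\ref{lemma_genuineenddivision} to $S$ with $t = u$ (absorbing into the large piece any small piece that happens to contain $v$, which costs only a negligible number of vertices because the proper-end hypothesis $|P_{u,v}| \leq \frac{99}{100}|S|$ leaves at least $|S|/100 \geq 200m$ vertices off $P_{u,v}$) to obtain a tree decomposition $S = T_1 \cup \dots \cup T_k$ with $u, v \in V(T_k)$, $|T_k| = |S| - m$, each $T_i$ ($i<k$) containing exactly one connector $s_i \in V(T_k)$, and $|T_1| - 1 \geq m/2$. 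For the moreover clause, we additionally arrange that $t \in V(T_1)$; this is possible because $t$ is a non-connector leaf of $S$ not adjacent to either $u$ or $v$.

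Define $X_0 = \{x \in X^- : d_G(x, W^-) \geq m/2\}$. Since each $w \in W^-$ has at most $2\lambda n$ edges in $H$, $e_H(W^-, X^-) \leq 2\lambda n m$ and hence $|X_0| \geq |X^-| - 4\lambda n \geq 2n/3$. Extend $\phi_0$ greedily (Lemma~\ref{lemma_greedy}) to an embedding $\phi$ of $T_k$ into $G[X_0]$, and, during the extension, steer $\phi(s_1)$ into a further-restricted, still linear-sized, subset $X_1 \subseteq X_0$ consisting of $x \in X^-$ with $d_H(x, W^-)$ very small---small enough that the eventual matching deficit is at most $\frac{\ell-1}{100}$; for the moreover clause, additionally ensure $\phi(s_1)$ is adjacent in $G$ to the leftmost vertex of $W^-$. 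Apply Hall's generalised matching criterion (Lemma~\ref{lem:hall}) with demand $|T_i|-1$ at each $\phi(s_i)$ to obtain disjoint sets $W_i^- \subseteq N_G(\phi(s_i)) \cap W^-$, of the right sizes up to a total deficit of at most $\frac{\ell-1}{100}$. The Hall condition breaks into the standard two cases: when $\sum_{i \in I}(|T_i|-1) \leq m/2$, the bound $\phi(s_i) \in X_0$ already provides $\geq m/2$ neighbours in $W^-$; in the complementary case we must have $1 \in I$ (since $|T_1|-1 \geq m/2$), and then $\phi(s_1) \in X_1$ combined with the $H$-degree bound at $W^-$-vertices gives a near-complete cover. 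For each $i$ we order $V(T_i)$ so every vertex has at most one rightward neighbour with $s_i$ rightmost (and $t$ leftmost in $T_1$ for the moreover clause; allowed since $t$ is a leaf of $T_1$ not adjacent to $s_1$), and then embed $T_i \setminus \{s_i\}$ into $W_i^-$ in an order-preserving way, with any overflow from an unfulfilled Hall demand placed in unused vertices of $X^-$. Completeness of $G[W^-]$ (from $H[W^-]$ having no edges) validates the embedding, the ordering yields~\ref{prop:Snew1}, and the matching-deficit bound yields~\ref{prop:Snew2}.

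The main obstacle is verifying Hall's condition on the complementary side: unlike Lemma~\ref{lemma_forends}, where the set $A^-$ supplied a vertex in $X$ with no $H$-edges whatsoever to $W^-$, here no such pre-designated set exists, and a near-universal neighbour must be constructed from the quantitative $H$-degree bounds alone. The calibration of $X_1$'s admitted $H$-degree, and hence the resulting deficit, is where the parameter $\ell$ appears: larger $\ell$ permits a coarser choice (with a larger $|X_1|$), while small $\ell$ remains workable because $S$ being a proper end forces $\ell \geq 1$, and the ratio $|S|/m \geq 20000$ supplies ample slack for all the preceding steps. Handling the moreover clause requires only a modest additional choice in the decomposition (placing $t$ in $T_1$) and in the position of $\phi(s_1)$, with no new ideas.
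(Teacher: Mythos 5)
There is a genuine gap at the step you yourself flag as ``the main obstacle'': the complementary case of Hall's criterion. You assign demand $|T_i|-1$ to each connector image $\phi(s_i)$, so when $\sum_{i\in I}(|T_i|-1)>m/2$ you need $\bigl|\bigcup_{i\in I}N_G(\phi(s_i))\cap W^-\bigr|\geq \sum_{i\in I}(|T_i|-1)-\frac{\ell-1}{100}$, and for $I=[k-1]$ the right-hand side is $m-\frac{\ell-1}{100}$. But the only quantitative input is $e_H(W^-,X^-)\leq 2\lambda n\,m$, so any linear-sized set $X_1$ can only consist of vertices with $d_H(x,W^-)=O(\lambda m)$; even the single best vertex of $X^-$ may miss $\Omega(\lambda m)$ vertices of $W^-$. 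Hence the deficit your argument produces is $O(\lambda m)$, not $\frac{\ell-1}{100}$, and these are incomparable: $S$ can be a long path with a single extra leaf, giving $\ell=1$ (allowed deficit $0$) while $m$ is as large as $|S|/20000$. Your closing remark that ``small $\ell$ remains workable because $\ell\geq 1$'' does not address this; the calibration you propose cannot be made to work. (Two smaller issues: ``absorbing into the large piece any small piece containing $v$'' destroys the exact count $|T_k|=|S|-m$ that your construction needs, and embedding $t$ leftmost in $T_1$ sends it to the leftmost vertex of $W_1^-$, not of $W^-$.)

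The paper's proof resolves exactly this point by changing what is demanded of each connector image. Since $H[W^-]$ is empty, $G[W^-]$ is complete, so $\phi(s_i)$ only needs to be $G$-adjacent to the $m_i$ vertices hosting the neighbours of $s_i$ inside $S_i$, not to all of $W_i^-$; the total demand is then $\ell_0=\sum_i m_i\leq\ell$ rather than $\approx m$. These demands are met greedily (no Hall argument is needed) inside a set $Z^-$ of the $\ell_0+\lfloor\frac{\ell-1}{100}\rfloor$ rightmost vertices of $W^-$: a vertex of $X^-$ fails the condition ``at least $\ell_0$ neighbours in $Z^-$'' only if it has more than $\lfloor\frac{\ell-1}{100}\rfloor$ non-neighbours there, and the number of such vertices is $O(\lambda n)$ uniformly in $\ell$ because $|Z^-|/(\lfloor\frac{\ell-1}{100}\rfloor+1)\leq 102$. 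The $\lfloor\frac{\ell-1}{100}\rfloor$ uncovered vertices of \itref{prop:Snew2} are budgeted up front by applying Lemma~\ref{lemma_genuineenddivision} with $m=|W^-|-1-\lfloor\frac{\ell-1}{100}\rfloor$ (to $S-t$ with the subtree $S^-$ of connectors and their neighbours contracted, which also keeps both connectors in the large piece exactly), and the leftmost vertex $w$ is covered at the very end by the leaf $t$, whose unique neighbour lands in $(W^-\setminus\{w\})\cup X_0^-\subset N_G(w)$. You would need to restructure your argument along these lines for the small-$\ell$ case.
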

\begin{proof}
First, note that if $|W^-|-1\leq \lfloor(\ell-1)/100\rfloor$, then it is sufficient to embed $S$ into $G[X^-\cup \{w\}]$, where $w$ is the leftmost vertex of $W^-$, with $t$ embedded to $w$ if necessary, and \eref{prop:Snew1} satisfied, as \eref{prop:Snew2} will then hold. Such an embedding is easy to construct greedily (by~\cref{lemma_greedy}) if there is a leaf of $S$ which is not a connector vertex or adjacent to a connector vertex in $S$. On the other hand, suppose every leaf of $S$ is a connector vertex or adjacent to a connector vertex in $S$ (and note, in particular, that therefore no $t$ exists to be embedded specifically to $w$). Then, as $S$ is a proper end, it has at least $|S|/100$ vertices not on the path between its connector vertices, and each of these vertices must then be a leaf adjacent to a connector vertex in $S$. Thus, as $S$ is a $(|W^-|,\ell)$-end, $\ell\geq |S|/100\geq 200|W^-|$. Then (considering the cases $|W^-|=0$ and $|W^-|>0$ separately) \eref{prop:Snew2} is trivial, and we can embed $S$ greedily (by~\cref{lemma_greedy}) into $G[X^-]$,  whereupon \eref{prop:Snew1} and \eref{prop:Snew2} will hold.

Suppose, therefore, that $|W^-|>\lfloor(\ell-1)/100\rfloor+1$. Then, as $S$ is a (proper) $(|W^-|,\ell)$-end, $\ell\leq \frac{|S|}{200}$. Let $S^-\subset S$ be the minimal subtree of $S$ containing every connector vertex in $S$ and their neighbours in $S$. Note that, as $S$ is a proper end, $|S^-|\leq \frac{99}{100}|S|+\ell\leq \frac{199}{200}|S|$.
In particular, if no $t$ has been specified, we can choose $t$ to be a leaf of $S$ which is not in $S^-$, where it is then not a connector vertex or adjacent to one, and thus we must also have that $\ell\geq 1$. Furthermore, as $\ell\leq 100|W^-|$, we have $|W^-|\geq 1$, so we can let $w$ be the leftmost vertex of $W^-$.

Note that $|S|-|S^-|\geq |S|/200> |W'|\geq m:=|W'|-1-\lfloor(\ell-1)/100\rfloor\geq 1$. Thus, by applying Lemma~\ref{lemma_genuineenddivision} to the tree $S-t$ with $S^-$ contracted to a single vertex (applying the lemma with $m$ and the contracted vertex), 
we can find, for some $k$, a tree decomposition of $S-t$ into $S_1,\ldots,S_{k}$ so that $S^-\subset S_k$, 
\begin{equation}\label{eqn:Sksize}
|S_k|=|S-t|-m=|S-t|-|W^-|+1+\bigg\lfloor\frac{\ell-1}{100}\bigg\rfloor,
\end{equation}
and each $S_i$ ($i\in[k-1]$) contains exactly one connector.  Furthermore, we may assume that for all $i\in [k-1]$ we have $|S_i|\geq 2$. Note that, then, as $S^-$ contains every connector vertex in $S$ and their neighbours in $S$, each $S_i$ ($i\in [k-1]$) does not contain any connectors of $S$. For each $i\in [k-1]$, let $m_i$ be the number of neighbours in $S_i$ of the connector of $S_i$, and note that we have $\sum_{i\in [k-1]}m_i\leq \ell$ and $\sum_{i\in [k-1]}m_i\leq |S-t|-|S_k|=|W^-|-1-\lfloor\frac{\ell-1}{100}\rfloor$. Let $\ell_0=\sum_{i\in [k-1]}m_i$, so $\ell_0\leq \ell$ and $|W^-|\geq\ell_0+\lfloor\frac{\ell-1}{100}\rfloor+1$.

Let $Z^-$ denote the set of $\ell_0+\lfloor\frac{\ell-1}{100}\rfloor$ rightmost vertices of $W^-$, and let $X_0^-$ be the set of vertices $v\in X^-$ which are neighbours of $w$ (the leftmost vertex of $W^-$) in $G$ and which have at least $\ell_0$ neighbours in $Z^-$ in $G$. Then, as each vertex in $W^-$ has at most $2\lambda n$ edges in $H$,
\[
|X_0^-|\geq |X^-|-2\lambda n-\frac{(\ell_0+\lfloor\frac{\ell-1}{100}\rfloor)\cdot 2\lambda n}{\lfloor\frac{\ell-1}{100}\rfloor+1}\geq |X^-|-\bigg(1+\frac{\ell+\lfloor\frac{\ell-1}{100}\rfloor}{\lfloor \frac{\ell-1}{100}\rfloor+1}\bigg)\cdot 2\lambda n\geq 2n/3,
\]
using that $|X^-|\geq 3n/4$, $\ell\geq 1$ and $\lambda\ll 1$.
Let $\phi:S[U]\to G[X^-]$ be an embedding of a set $U$ of connector vertices of $S$. As before, we can (by~\cref{lemma_greedy}) greedily extend this to an embedding of $S_k$ in $G[X^-]$ with all the vertices outside of $U$ embedded to $X_0^-$. For each $i\in[k-1]$, let $x_i$ be the image of the connector of $S_i$, so that, as $S_i$ contains no connectors of $S$ (and hence no vertices in $U$), we have $x_i\in X_0^-$.

Using the definition of $X_0^-$, we can greedily find disjoint subsets $Z_1^-,\dots, Z_{k-1}^-$ of $Z^-$ such that, for all $i\in [k-1]$, $|Z_i^-|=m_i$ and  $Z_i^-\subset N_G(x_i)$. Moreover, since
$$\sum_{i\in[k-1]}(|S_i|-m_i-1)=|S-t|-|S_k|-\sum_{i\in[k-1]}m_i=|W^-|-1-\left\lfloor\frac{\ell-1}{100}\right\rfloor-\ell_0=|W^-|-1-|Z^-|,$$ we can easily pick disjoint $Y_1^-,\dots, Y_{k-1}^-\subset W^-\setminus(\{w\}\cup Z^-)$ such that $|Y_i^-|=|S_i|-m_i-1$ for all $i\in[k-1]$. Thus, by setting $W_i^-=Y_i^-\cup Z_i^-$ for all $i\in [k-1]$, we can find $W_1^-,\ldots,W_{k-1}^-$ in $W^-\setminus \{w\}$ such that, for each $i\in[k-1]$, $|W_i^-|=|S_i|-1$, and the $m_i$ rightmost points of $W_i^-$ are neighbours of $x_i$. For each $i\in[k-1]$, we can embed $S_i$ in $G[W_i^-\cup\{x_i\}]$ so that the connector is mapped to $x_i$ and each vertex in $W_i^-$ has at most $1$ rightward neighbour in this embedding. Indeed, we can order the vertices of $S_i$ as $s_{i,1},\dots,s_{i,|S_i|}$ so that $s_{i,|S_i|}$ is the connector, $s_{i,|S_i|-1},\dots,s_{i,|S_i|-m_i}$ are its neighbours, and every vertex has at most $1$ rightward edge, and then we can simply map $V(S_i)$ into $W_i^-\cup\{x_i\}$ in an order-preserving way. Putting together these embeddings for each $S_i$, $i\in[k-1]$, as well as our embedding $\phi$, we get an extension of $\phi$ to $S-t$. Finally, we can extend $\phi$ to $S$ by embedding $t$ to $w$, since $\phi$ maps the unique neighbour of $t$ to a point in $(W^-\setminus\{w\})\cup X_0^-\subset N_G(w)$. Then, we have $\phi(t)=w$, and, writing $S'$ for the embedded copy of $S$, each point in $V(S')\cap W$ has at most one rightward neighbour, and 
\[
|W^-\setminus V(S')|=|W^-|-\sum_{i\in[k-1]}(|S_i|-1)-1=|W^-|-(|S-t|-|S_k|)-1\overset{\eqref{eqn:Sksize}}{=}\left\lfloor\frac{\ell-1}{100}\right\rfloor,
\]
as desired.
\end{proof}



Next, we use \cref{lemma_properend} to deduce the following form of~\cref{lemma_eitherend} for artificial ends.

\begin{restatable}{lemma}{lemmaartificialend}\label{lemma_artificialend} Let $1/n\ll \eps\ll \lambda \ll 1$ and $r=\eps n$.
Let $v_1,\ldots,v_n$ be an ordering of the vertices of $K_n$ and let $W=\{v_1,\ldots,v_r\}$ and $X=\{v_{r+1},\ldots,v_{n}\}$. Let $G\subset K_n$ have $\delta(G)\geq 49n/50$, and let $H$ be the complement of $G$ in $K_n$. Suppose that $e(H)\leq \eps n^2$.

Let $W^-\subset W$, and let $X^-\subset X$ contain at least $4n/5$ vertices. Let $T$ be a tree and let $S$ be an artificial $(4|W^-|,\ell)$-end of $T$ for some $\ell$, with $|S|\leq n/50$. Suppose that 
each vertex in $W^-$ has at most $2\lambda n$ edges in $H$, $H[W^-]$ is $2$-colourable, and the leftmost vertex of $W^-$ is an isolated vertex in $H[W^-]$ (or $W^-=\emptyset$).


  Then, if $U\subset V(S)$ is a subset of the (two) connector vertices of $S$ in $T$, then any embedding of $S[U]$ in $G[X^-]$ extends to an embedding of $S$ in $G[W^-\cup X^-]$, such that, if $S'$ is the embedded copy of $S$, then the following conditions are satisfied.
  \stepcounter{propcounter}
  \begin{enumerate}[label = \textbf{\emph{\Alph{propcounter}\arabic{enumi}}}]
  \item Each vertex of $S'$ in $W^-$ has at most $1$ rightward neighbour in $S'$, unless it is the leftmost vertex in $W^-$, when it has at most $2$ rightward neighbours in $S'$.\label{prop:Snewother1}
		\item There are at most $\frac{\ell}{100}$ vertices in $W^-$ which are not in $S'$.\label{prop:Snewother2}
    \end{enumerate}
\end{restatable}
\begin{proof}
If $W^-=\emptyset$, then the result follows easily from a greedy algorithm (see \cref{lemma_greedy}), thus, we may assume from now on that $W^-\not =\emptyset$. Let $u$ and $v$ be the (distinct, as $S$ is an artificial end with $|S|\geq 1$) connector vertices of $S$ in $T$ and let $P$ be the $u,v$-path in $S$. As $S$ is an artificial end, $|P|\geq \frac{99}{100}|S|$. Then, at least $\frac{98}{100} |S|$ vertices on $P$ have no neighbours in $S$ in $V(S)\setminus V(P)$. Let $t$ be a vertex of $P$ in the middle third of the path such that 
$t$ does not have any neighbours in $V(S)\setminus V(P)$. The vertex $t$ cuts $S$ into two subtrees intersecting in $t$, let us denote these by $S_1$ and $S_2$ (where we include $t$ in both $S_1$ and $S_2$), and label them so that $u\in V(S_1)$ and $v\in V(S_2)$. Similarly, $t$ cuts $T$ into two trees $T_1$ and $T_2$, intersecting in $t$, such that $S_i\subset T_i$ for each $i\in[2]$. 
 For each $i\in[2]$, say $S_i$ contains $\ell_i$ leaves distinct from $u$ and $v$, so $\ell_1+\ell_2=\ell+2$, where we used that $t$ is a leaf of both $S_1$ and $S_2$, but not $T$. Note that, for each $i\in[2]$, $|S_i|\geq |P|/3\geq \frac{99}{300} |S|$, hence, as $S$ is a $(4|W^-|,\ell)$-end, we get that $S_i$ is a proper $(|W^-|,\ell_i)$-end of $T_i$ (with a single connector vertex, either $u$ or $v$).

Let $w$ be the leftmost vertex of $W^-$. We know that $H[W^-]$ is $2$-colourable and $w$ is an isolated vertex, so we can take $W_1^-$, $W_2^-\subset W^-$ such that $W_1^-\cup W_2^-=W^-$, $W_1^-\cap W_2^-=\{w\}$, and $H[W_1^-], H[W_2^-]$ are empty. Let $\phi$ be the given embedding of $S[U]$ in $G[X^-]$. We may assume that $U=\{u,v\}$ by, otherwise, extending $\phi$ arbitrarily to an injective function $\{u,v\}\to X^-$. 

By our choice of $t$, we have that $t$ is a leaf of $S_1$ which is not a neighbour of $u$. Applying~\cref{lemma_properend}, then, for the proper $(|W_1^-|,\ell_1)$-end $S_1$ of $T_1$, we deduce that there exists an embedding $\psi_1$ of $S_1$ in $G[W_1^-\cup(X^-\setminus\{\phi(v)\})]$ such that $\psi_1(u)=\phi(u)$, $\psi_1(t)=w$, and, letting $S_1'$ be the embedded copy of $S_1$, each vertex in $W_1^-\cap V(S_1')$ has at most $1$ rightward edge in $S_1'$, and $|W_1^-\setminus V(S_1')|\leq \frac{\ell_1-1}{100}$. Similarly, applying~\cref{lemma_properend} again, there is an embedding $\psi_2$ of $S_2$ in $G[W_2^-\cup(X^-\setminus V(S_1'))]$ such that $\psi_2(v)=\phi(v)$, $\psi_2(t)=w$, and, letting $S_2'$ be the embedded copy, each vertex in $W_2^-\cap V(S_2')$ has at most $1$ rightward edge in $S_2'$, and $|W_2^-\setminus V(S_2')|\leq \frac{\ell_2-1}{100}$. Putting together the embeddings $\psi_1$ and $\psi_2$, we get an embedding of $S$ in $G[W^-\cup X^-]$, extending the embedding of $\phi$. Let $S'=S_1'\cup S_2'$ denote the embedded copy of $S$. Then, each vertex of $S'$ in $W^-$ has at most one rightward neighbour in $S'$, except the vertex $w$, which has $2$ rightward neighbours. Moreover, $|W^-\setminus V(S')|=|W_1^-\setminus V(S_1')|+|W_2^-\setminus V(S_2')|\leq \frac{\ell_1-1}{100}+\frac{\ell_2-1}{100}=\frac{\ell}{100}$, as desired.
\end{proof}



Finally, we prove~\cref{lemma_eitherend} for both types of ends. We have seen in~\cref{lemma_artificialend} that we can embed an artificial end to cover most of the vertices in a set $W^-\subset W$ which induces a 2-colourable graph, embedding so that each vertex covered has just one rightward edge, apart from the leftmost (`turnaround') vertex. If we only have access to proper ends and no artificial ones, then the same can be achieved by using two proper ends and applying~\cref{lemma_properend} twice (in fact, this way we do not even need a turnaround vertex). Moreover, any (sufficiently large) end contains either an artificial end or two proper ends, by~\cref{lem_endblobs1}. Using these observations, we can now prove~\cref{lemma_eitherend}.

\begin{proof}[Proof of~\cref{lemma_eitherend}] If $W^-=\emptyset$, then the result follows easily from a greedy algorithm (see \cref{lemma_greedy}). Thus, from now on, we may assume that $W^-\not =\emptyset$. Since $S$ is a $(36|W^-|,\ell)$-end of $T$, we have that $|S|\geq 20000\cdot 36|W^-|$, and that $S$ has at most two connector vertices in $T$. Using Lemma~\ref{lem_endblobs1} (with $k=2$), we can find a tree decomposition $S_1,S_2,\dots,S_m$ of $S$ such that, for each $i\in[2]$, $4\cdot20000|W^-|\leq |S_i|\leq 24\cdot 20000|W^-|$, and for each $j\in[m]$, $S_j$ has at most $2$ connector vertices in $T$. Thus, for each $i\in[2]$, $S_i$ is a $(4|W^-|,\ell_i)$-end of $T$ for some $\ell_i$, with $\ell_1+\ell_2\leq \ell$.

First suppose that at least one of $S_1$ and $S_2$ is an artificial end, and note that, without loss of generality, we can assume that it is $S_1$. Let $U$ be a subset of the (at most two) connector vertices of $S$ in $T$, and let $\phi$ be a given embedding of $S[U]$ in $G[X^-]$. Let $U'$ be the set of all connector vertices of $S$ in $T$, together with the set of connector vertices of $S_1$ in $T$. Note that $|U'|\leq 4$, so we can easily extend $\phi$ to an embedding of $S[U']$ in $G[X^-]$. By~\cref{lemma_artificialend}, there is an embedding of $S_1$ in $G[W^-\cup (X^-\setminus \phi(U'\setminus V(S_1)))]$ such that the embedding agrees with $\phi$ on the connector vertices of $S_1$, every vertex in $W^-$ in the image has at most $1$ rightward neighbour in the embedded copy, except possibly the leftmost vertex of $W^-$, which has at most $2$ rightward neighbours, and there are at most $\frac{\ell_1}{100}\leq\frac{\ell}{100}$ vertices in $W^-$ not covered by the embedding. Thus, we get an extension of $\phi$ embedding $S[V(S_1)\cup U]$ in $G[W^-\cup X^-]$ satisfying the conditions above. 
By a greedy algorithm (i.e., by \cref{lemma_greedy} with $\kappa=3$, using $\Delta(H)\leq n/50$ and $|X^-|\geq 5n/6$), we can extend this to an embedding of $S$ in $G[\phi(V(S_1))\cup X^-]\subset G[W^-\cup X^-]$, which then satisfies the required conditions.

Thus, we can assume that both $S_1$ and $S_2$ are proper ends. Let $U'$ be the union of the sets of connector vertices of $S_1, S_2$ and $S$ in $T$, and note that $|U'|\leq 6$. Given a subset $U$ of the connector vertices of $S$ in $T$, and some embedding $\phi$ of $S[U]$ in $G[X^-]$, we can extend $\phi$ to an embedding of $S[U']$ in $G[X^-]$. Let $W_1^-, W_2^-$ be a partition of $W^-$ such that $H[W_i^-]$ is empty for each $i\in [2]$. By~\cref{lemma_properend}, we can find an embedding of $S_1$ in $G[W_1^-\cup (X^-\setminus \phi(U'\setminus V(S_1)))]$ such that the embedding agrees with $\phi$ on the connector vertices of $S_1$ in $T$, and, letting $S_1'$ be the embedded copy of $S_1$, each vertex in $W_1^-\cap V(S_1')$ has at most one rightward edge in $S_1'$, and we have $|W_1^-\setminus V(S_1')|\leq\frac{\ell_1-1}{100}$. Similarly, by~\cref{lemma_properend}, we can find an embedding of $S_2$ in $G[W_2^-\cup (X^-\setminus \left(\phi(U')\cup V(S_1')\right))\cup\phi(U'\cap V(S_2))]$ such that the embedding agrees with $\phi$ on the connector vertices of $S_2$ in $T$, and letting $S_2'$ be the embedded copy, each vertex in $W_2^-\cap V(S_2')$ has at most one rightward edge in $S_2'$, and we have $|W_2^-\setminus V(S_2')|\leq\frac{\ell_2-1}{100}$. Putting together the two embeddings producing $S_1'$ and $S_2'$ with the embedding $\phi$, we get an extension of $\phi$ embedding $S[U'\cup V(S_1)\cup V(S_2)]$ in $G[W^-\cup X^-]$ such that each vertex of $W^-$ in the image has at most one rightward edge in the embedded copy, and at most $\frac{\ell_1-1}{100}+\frac{\ell_2-1}{100}\leq \frac{\ell-2}{100}$ vertices in $W^-$ are not covered by the embedding. By a greedy algorithm again (i.e., by \cref{lemma_greedy} with $\kappa=4$, using again that $\Delta(H)\leq n/50$ and $|X^-|\geq 5n/6$), we can extend this (using vertices in $X^-$ only) to get an extension of $\phi$ embedding $S$ in $G[W^-\cup X^-]$ which then satisfies the required conditions.
\end{proof}


\subsection{Filling in the gaps in $W$ with leaves}\label{section_key5}
In the previous section we have seen (with \cref{lemma_eitherend}) that we can cover each $W_i^-$ in~\cref{lemma_finalembedding} to have `few' gaps, where the number of gaps is a small fraction of the number of leaves of the ends we use. We now state a lemma that will be used to fill in this set of gaps in $\bigcup_{i\in [k]} W_i^-$, using leaves of our tree. The proof is similar to that of~\cref{lemma_onestarlike}, inspired again by the method of Havet, Reed, Stein and Wood~\cite{havet2020variant}, though, as the tree we embed comprises a smaller proportion of the overall vertices, the details are simpler.

\begin{restatable}{lemma}{lemmagapfilling}\label{lemma_gapfilling} Let $1/n\ll \eps\ll \lambda \ll
1$ and $r=\eps n$.
Let $v_1,\ldots,v_n$ be an ordering of the vertices of $K_n$ and let $W=\{v_1,\ldots,v_r\}$ and $X=\{v_{r+1},\ldots,v_{n}\}$. Let $G\subset K_n$ have $\delta(G)\geq 49n/50$, and let $H$ be the complement of $G$ in $K_n$. Suppose that $e(H)\leq \eps n^2$.

Let $\ell$ be a non-negative integer, let $T$ be a tree, and let $S$ be a subtree of $T$ with at most $2$ connector vertices in $T$ such that $S$ has at most $n/4$ vertices and at least $\ell$ leaves outside of its connector vertices in $T$. Let $W^-\subset W$ have size $\ell$, and let $X^-\subset X$ contain at least $3n/4$ vertices, including a set of vertices $A^-\subset X^-$ such that $H[A^-]$ is empty, there are no edges in $H$ between $A^-$ and $W^-$, and $|A^-|\geq 3$. Suppose furthermore that 
each vertex in $W^-$ has at most $2\lambda n$ edges in $H$.

  Then, whenever $U\subset V(S)$ is a subset of the (at most two) connector vertices of $S$ in $T$, then any embedding of $S[U]$ in $G[A^-]$ extends to an embedding of $S$ in $G[W^-\cup X^-]$ such that every vertex in $W^-$ is used as a leaf of the image of $S$.
\end{restatable}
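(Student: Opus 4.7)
The plan is to adapt the strategy used in the proof of \cref{lemma_onestarlike}, which in turn is based on the method of Havet, Reed, Stein and Wood. I would first pick $L\subset V(S)$, a set of $\ell:=|W^-|$ non-connector leaves of $S$ (possible by the hypothesis that $S$ has at least $\ell$ such leaves), and set $S^-:=S-L$. For each $s\in V(S^-)$, define $d_s:=|N_S(s)\cap L|$, so that $\sum_{s\in V(S^-)}d_s=\ell$. Let $t\in V(S^-)$ be a vertex maximising $d_t$, and write $d:=d_t$.

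The first step would be to extend the given embedding of $S[U]$ in $G[A^-]$ by mapping $t$ to some vertex in $A^-\setminus\phi(U)$, which exists since $|A^-|\geq 3\geq|U|+1$. Since $H$ has no edges between $A^-$ and $W^-$, $\phi(t)$ is then a $G$-neighbour of every vertex in $W^-$. I would then extend $\phi$ to the whole of $S^-$ by processing the remaining vertices in a BFS-like order, each time choosing the image uniformly at random among the neighbours of the already-embedded parent lying in $X^-\setminus(W^-\cup X^{\mathrm{low}}\cup\mathrm{Im}(\phi))$, where $X^{\mathrm{low}}:=\{v\in X:d_H(v,W^-)>\ell/100\}$. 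Using $e_H(W^-,X)\leq 2\lambda n\ell$ we have $|X^{\mathrm{low}}|\leq 200\lambda n$, and the bounds $|X^-|\geq 3n/4$, $|S|\leq n/4$, and $\Delta(H)\leq n/50$ guarantee that a valid choice always exists at each step.

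Finally, I would attach each leaf $\lambda\in L$ to a distinct vertex of $W^-$ using a $G$-neighbour of $\phi(s_\lambda)$, where $s_\lambda$ is the parent of $\lambda$ in $S$, by verifying Hall's generalised matching criterion (\cref{lem:hall}) for the bipartite graph in question. For a non-empty $I\subseteq\{s\in V(S^-):d_s>0\}$, the verification splits into three cases. If $\sum_{s\in I}d_s\leq 99\ell/100$, then any single $s\in I$ has $|N_G(\phi(s),W^-)|\geq 99\ell/100\geq\sum_I d_s$ by our avoidance of $X^{\mathrm{low}}$. If $\sum_{s\in I}d_s>99\ell/100$ and $t\in I$, then $\phi(t)\in A^-$ alone gives $|N(I)|=\ell$. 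The remaining case, $\sum_{s\in I}d_s>99\ell/100$ with $t\notin I$, forces $d<\ell/100$ and reduces (exactly as in \cref{lemma_onestarlike}) to the claim that, with positive probability over the random embedding, at most $d$ vertices $w\in W^-$ satisfy $\sum_{s\in V(S^-):\phi(s)\in N_G(w)}d_s<\ell/100$.

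The main obstacle is establishing this concentration claim. For each fixed $w\in W^-$, I would bound the failure probability by $d/(2\ell)$ via Azuma's inequality (\cref{lem:azuma}) applied to a submartingale tracking the weighted contributions, mirroring the argument in the proof of \cref{lemma_onestarlike} and using $\sum_s d_s^2\leq d\ell$ together with the fact that at each step the random image of $\phi(s)$ lies in $N_G(w)$ with probability at least roughly $1/50$. Markov's inequality over the $\ell$ vertices of $W^-$ then gives the claim with probability at least $1/2$. The hierarchy $1/n\ll\varepsilon\ll\lambda\ll 1$ is precisely calibrated so that this concentration bound closes the regime $d<\ell/100$ where the third case is non-trivial.
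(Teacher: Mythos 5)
Your proposal follows the same architecture as the paper's proof: reserve the vertex $t$ of maximum leaf-degree for $A^-$ (so that its image is complete to $W^-$ in $G$), randomly embed $S-L$ into vertices with small $H$-degree to $W^-$, and verify Hall's generalised criterion, handling the sets $I$ with large $\sum_{s\in I}d_s$ and $t\notin I$ via an Azuma--Markov concentration claim. However, there is a genuine quantitative gap in that concentration step, caused by where you place the case split. You split at $\sum_{s\in I}d_s\leq 99\ell/100$, so the regime in which the concentration claim must be invoked is $d<\ell/100$, and you propose to bound the per-vertex failure probability by $d/(2\ell)$ via Azuma with per-step success probability ``roughly $1/50$''. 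Running the numbers: the submartingale $\sum_i(X_i-d_{s_i}/50)$ has total expectation at least $(\ell-d^*)/50$, the target is $\ell/100-d^*$, so the deviation is about $\ell/100$ and Azuma yields $\exp\left(-(\ell/100)^2/(2d\ell)\right)=\exp\left(-\ell/(20000d)\right)$. Writing $y=\ell/d$, you need $y/20000\geq\ln(2y)$, which fails for all $y$ up to roughly $2.5\times10^5$; in particular, for $d=\ell/200$ the bound is $\exp(-1/100)\approx 0.99$, useless against the target $1/400$, so Markov gives nothing. The hierarchy $\eps\ll\lambda\ll1$ cannot rescue this: $d/\ell$ may equal the absolute constant $1/200$ no matter how small $\lambda$ is, and whether $\exp(-c/x)\leq x/2$ holds at $x=1/200$ is a fixed numerical question that your constants answer in the negative.

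There are two ways to repair this, and the paper takes the first. One option is to tie the case split to $\lambda$: define the excluded set as the vertices of $X^-$ with more than $8\lambda\ell$ $H$-edges to $W^-$ (this set has size at most $n/4$, which is still affordable given $|X^-|\geq 3n/4$ and $|S|\leq n/4$); then the easy Hall case covers $\sum_{s\in I}d_s\leq(1-8\lambda)\ell$, the concentration claim is only needed when $d<8\lambda\ell$, i.e.\ $y\geq 1/(8\lambda)$, and there $\exp(-\Omega(y))\leq 1/(2y)$ does hold once $\lambda\ll1$. The second option is to keep your threshold but use the much stronger per-step probability actually available in this lemma: since $w\in W^-$ has $d_H(w)\leq 2\lambda n$ and your candidate sets have size $\Omega(n)$, each $\phi(s_i)$ lands in $N_G(w)$ with probability at least $1-O(\lambda)$ rather than $1/50$; the deviation then becomes $\Omega(\ell)$ and the Azuma bound $\exp(-\Omega(\ell/d))\leq\exp(-\Omega(100))$ comfortably beats $d/(2\ell)$ for all $d<\ell/100$. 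The constant $1/50$ you imported from \cref{lemma_onestarlike} is forced there by $\delta(G)\geq n/10$, but is far from sharp here, and as written your proof of the key claim does not close. (A further minor point: in your random embedding a vertex can have up to three already-embedded neighbours, since $S[U\cup\{t\}]$ may have three components, so you should intersect up to three $G$-neighbourhoods as in \cref{lemma_greedy}; the counting still works.)
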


\begin{proof}
	Let $X_0^-=\{v\in X^-: d_G(v,W^-)\geq (1-8\lambda)\ell\}=\{v\in X^-: d_H(v,W^-)< 8\lambda\ell\}$. Note that $e_H(W^-,X^-)\leq \ell\cdot 2\lambda n$, giving $|X^-\setminus X_0^-|\leq \frac{\ell\cdot 2\lambda n}{8\lambda\ell}=n/4$. In particular, $|X_0^-|\geq \frac{1}{2}n$, and note also that $A^-\subseteq X_0^-$.

   Let $L$ be a set of $\ell$ leaves of $S$ which is disjoint from the connector vertices of $S$ in $T$, and, for each $p\in V(S)\setminus L$, let $d_p=d_S(p,L)$ be the number of neighbouring leaves of $p$ in $L$. Let $d=\max_{p\in V(S)\setminus L} d_p$, and let $u$ be a vertex of $S$ with $d_u=d$.
Let $U$ be a subset of the (at most two) connector vertices of $S$ in $T$, and let $\phi$ be a given embedding of $S[U]$ in $G[A^-]$. Let $U'$ be the union of $\{u\}$ and the set of connector vertices of $S$ in $T$ (so that $|U'|\leq 3$), and extend $\phi$ arbitrarily to an embedding of $S[U']$ in $G[A^-]$, using $|A^-|\geq 3$.

	Order the vertices of $V(S-L)\setminus U'$ as $s_1,\dots,s_m$ in such a way that, for all $i\in [m]$, $s_i$ has at most $3$ neighbours to its left or in $U'$ (in the same way as in the proof of~\cref{lemma_greedy}). 
 Now we extend $\phi$ to embed $S-L$ into $G[X_0^-]$, by embedding the vertices one-by-one in this order, such that for each $i\in[m]$, if $Y_i=N_S(s_i)\cap(U'\cup \{s_1,\dots,s_{i-1}\})$ denotes the set of neighbours of $s_i$ to its left or in $U'$, then $\phi(s_i)$ is picked uniformly at random from $\left(X_0^-\cap \bigcap_{v\in Y_i}N_G(\phi(v))\right)\setminus\phi(U'\cup\{s_1,\dots,s_{i-1}\})$. Note that this is always possible, as $|Y_i|\leq 3$ and, hence,
 \begin{equation*}
   \bigg|\bigg(X_0^-\cap\bigcap_{v\in Y_i}N_G(\phi(v))\bigg)\setminus \phi(U'\cup\{s_1,\dots,s_{i-1}\})\bigg|\geq \frac{n}{2}-3\cdot \frac{n}{50}-|S|>0.  
 \end{equation*}

	First, assuming that $d\geq 8\lambda \ell$, we can show using Hall’s generalised matching criterion (see \cref{lem:hall}) that we can complete the embedding of $\phi$ by mapping $L$ to $W^-$. Indeed, if $P\subseteq V(S)\setminus L$ is non-empty, then, by the definition of $X_0^-$, we have $|N_G(\phi(P))\cap W^-|\geq (1-8\lambda) \ell$. Thus, if $\sum_{p\in P}d_p\leq (1-8\lambda)\ell$, then $|N_G(\phi(P))\cap W^-|\geq \sum_{p\in P}d_p$. On the other hand, if  $\sum_{p\in P}d_p> (1-8\lambda)\ell$, then necessarily $u\in P$ and hence $N_G(\phi(P))\supseteq N_G(\phi(u))\supseteq W^-$. So we can indeed apply~\cref{lem:hall} to complete the embedding and cover $W^-$ by the leaves in $L$.

	Thus, from now on, we may assume that $d< 8\lambda\ell$. (Note that this gives $\ell\not=0$ and hence $d\not =0$.) Similarly to the proof of~\cref{lemma_onestarlike}, we will show the following claim. 
 \begin{claim} With probability at least $1/2$, there are at least $\ell-d$ vertices $w\in W^-$ with
\begin{equation}\label{eqn:martingalecondition}
\sum_{p\in V(S)\setminus L:\phi(p)\in N_G(w)}d_p\geq  \frac{\ell}{20}.
\end{equation}\label{clm:new2}
\end{claim}
Before proving this claim, we show that it can be used to finish the proof of the lemma. Take an embedding $\phi$ of $S-L$ in $G[X_0^-]$ as above such that~\eqref{eqn:martingalecondition} fails for at most $d$ vertices in $W^-$, as must exist by the claim. Take a set $W_0^-$ of $d$ vertices in $W^-$ containing all vertices for which~\eqref{eqn:martingalecondition} fails, and embed them as the leaves adjacent to $u$. Note that this is possible, as $\phi(u)\in A^-$. We show using Hall’s generalised matching criterion (see \cref{lem:hall}) that we can complete the embedding of $\phi$ by mapping $L\setminus N_S(u)$ to $W^-\setminus W_0^-$. Consider any non-empty set $P\subset V(S)\setminus (L\cup\{u\})$. By the definition of $X_0^-$, and as $d<8\lambda \ell$, we have $|N_G(\phi(P))\cap (W^-\setminus W_0^-)|\geq (1-8\lambda)\ell-d\geq \frac{19}{20}\ell$. Hence, if $\sum_{p\in P}d_p\leq \frac{19}{20}\ell$, then $|N_G(\phi(P))\cap (W^-\setminus W_0^-)|\geq\sum_{p\in P}d_p$. On the other hand, if $\sum_{p\in P}d_p> \frac{19}{20}\ell$, then, by~\eqref{eqn:martingalecondition}, for all $w\in W^-\setminus W_0^-$ there is some $p\in P$ with $\phi(p)\in N_G(w)$, and hence $N_G(\phi(P))\supseteq W^-\setminus W_0^-$. So we can indeed apply~\cref{lem:hall} to cover $W^-\setminus W_0^-$ by $L\setminus N_S(u)$.

Thus, it is left only to prove Claim~\ref{clm:new2}.
\begin{proof}[Proof of Claim~\ref{clm:new2}] 
It is enough to show that, for all $w\in W^-$,~\eqref{eqn:martingalecondition} fails with probability at most $\frac{d}{2\ell}$. Indeed, if this is the case, then by Markov's inequality, with probability at least $1/2$,~\eqref{eqn:martingalecondition} fails for at most $d$ vertices, as claimed. Thus, fix any $w\in W^-$. For each $i\in[m]$, let
\begin{equation}
Z_{i}=\left\{\begin{array}{ll}d_{s_i} & \text{if $\phi(s_i)\in N_G(w)$}\\
0 & \text{otherwise}.\end{array}\right.
\end{equation} 
Observe that, for all  $i\in[m]$,
  \begin{align*}
    \label{eqn_possiblevalues}
   \PP\left(Z_{i}=d_{s_i}\;|\;\phi(s_1),\dots,\phi(s_{i-1})\right)&=\frac{\big|\left(X_0^-\cap\bigcap_{v\in Y_i\cup\{w\}}N_G(\phi(v))\right)\setminus \phi(U'\cup\{s_1,\dots,s_{i-1}\})\big|}{\left|\left(X_0^-\cap\bigcap_{v\in Y_i}N_G(\phi(v))\right)\setminus \phi(U'\cup\{s_1,\dots,s_{i-1}\})\right|}\nonumber\\&\geq \frac{\frac{n}{2}-4\cdot \frac{n}{50}-|S|}{n}\nonumber\geq \frac{1}{10}.  
 \end{align*}
Let us write $d^*=\ell-\sum_{i\in[m]}d_{s_i}=\sum_{p\in U'}d_p$.
Note that, since $\phi(U')\subset A^-\subset N_G(w)$, we have
\[\sum_{p\in V(S)\setminus L:\phi(p)\in N_G(w)}d_p=d^*+\sum_{i\in[m]}Z_{i}.\]
Using Azuma's inequality (\cref{lem:azuma}) for the submartingale $\sum_{j\in[i]}(Z_{j}-d_{s_j}/10)$, we have 
\begin{align*}
\PP\bigg(\sum_{p\in V(S)\setminus L:\phi(p)\in N_G(w)}d_p\leq \frac{\ell}{20}\bigg)
&=\PP\bigg(\sum_{i\in[m]} Z_{i}+d^*\leq \frac{\sum_{i\in[m]}d_{s_i}+d^*}{20}\bigg)\\
&=\PP\bigg(\sum_{i\in[m]} \bigg(Z_{i}-\frac{d_{s_i}}{10}\bigg)\leq -\frac{\sum_{i\in[m]}d_{s_i}}{20}-\frac{19}{20}d^*\bigg)\\
&\leq \exp\bigg(-\frac{(\sum_{i\in[m]}d_{s_i}/20+19d^*/20)^2}{2\sum_{i\in[m]}d_{s_i}^2} \bigg)\\
&\leq \exp\bigg(-\frac{(\sum_{p\in V(S)\setminus L}d_{p}/20)^2}{2\sum_{p\in V(S)\setminus L}d_{p}^2} \bigg)\\
&\leq
\exp\bigg(-\frac{(\sum_{p\in V(S)\setminus L}d_{p}/20)^2}{2d\sum_{p\in V(S)\setminus L}d_{p}} \bigg)
= 
\exp\left(-\frac{\ell}{800d} \right)
\leq \frac{d}{2\ell},
\end{align*}
where in the last step we used $d\leq 8\lambda \ell$ and $\lambda\ll 1$. This finishes the proof of the claim and hence the lemma.
\renewcommand{\qedsymbol}{$\boxdot$}
\qedhere
\renewcommand{\qedsymbol}{$\square$}
\qedsymbol
\end{proof}
\renewcommand{\qedsymbol}{}
\end{proof}
\renewcommand{\qedsymbol}{$\square$}


\subsection{Completing the embedding}\label{section_key6}
The following lemma is the last step towards proving~\cref{lemma_finalembedding}. It is used to extend our partial embedding covering $W^-$ to a full embedding, while making sure that a protected set $X^\text{forb}$ is covered by well-separated degree $2$ vertices (this is needed in the proof of the main lemma to guarantee that~\itref{prop:S2} and~\itref{prop:S3} are satisfied).
\begin{lemma}\label{lemma_fillingwithbarepaths}
Let $1/n\ll \eps\ll \lambda \ll
1$ and $r=\eps n$.
Let $v_1,\ldots,v_n$ be an ordering of the vertices of $K_n$ and let $W=\{v_1,\ldots,v_r\}$ and $X=\{v_{r+1},\ldots,v_{n}\}$. Let $G\subset K_n$ have $\delta(G)\geq 49n/50$, and let $H$ be the complement of $G$ in $K_n$. Suppose that $e(H)\leq \eps n^2$.

Let $T$ be a tree and let $S$ be a subtree of $T$ with at most $\lambda n$ leaves, at most $2$ connector vertices in $T$, and with $|S|\geq n/7$. Let $X^-\subseteq X$ have size $|S|$, and let $X^\mathrm{forb}\subseteq X^-$ with $|X^\mathrm{forb}|\leq 40\lambda n$.

Then, whenever $U\subset V(S)$ is a subset of the (at most two) connector vertices of $S$ in $T$, then any embedding of $S[U]$ in $G[X^-\setminus X^\mathrm{forb}]$ extends to an embedding of $S$ in $G[X^-]$ such that the vertices in $X^\mathrm{forb}$ are covered by an independent set of non-connector degree $2$ vertices.
\end{lemma}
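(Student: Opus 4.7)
The plan is to pick an independent set $I = \{z_1,\ldots,z_k\}$ of $k = |X^{\mathrm{forb}}|$ non-connector degree-$2$ vertices of $S$ sitting in bare paths $a_i - z_i - b_i$ (where $a_i, b_i$ denote the two $S$-neighbors of $z_i$), fix a bijection $\pi\colon I \to X^{\mathrm{forb}}$, and extend $\phi$ to an embedding of $S$ into $G[X^-]$ with $\phi|_I = \pi$.

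To find such an $I$: since $S$ has at most $\lambda n$ leaves, $\Delta(S) \le \lambda n$ (each neighbor of a vertex heads a subtree containing a leaf of $S$), and by the identity $\sum_v (d_S(v)-2) = -2$, $S$ has at most $\lambda n$ vertices of degree at least $3$. Therefore the set $D_2$ of non-connector degree-$2$ vertices of $S$ satisfies $|D_2| \ge |S| - 2\lambda n - 2 \ge n/10$, and $S[D_2]$ is a disjoint union of paths. Excluding the at most $2\lambda n$ vertices of $N_S(U)$ and selecting internal vertices of the remaining long paths at spacing at least $3$ yields an $I$ with the further property that the triples $\{z_i, a_i, b_i\}$ are pairwise disjoint and disjoint from $U$; indeed, internal vertices of a single component of $S[D_2]$ have both their $S$-neighbors in that same component, so vertices chosen from different components of $S[D_2]$ share no $S$-neighbor.

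With $\pi$ fixed, I would embed $S$ into $G[X^-]$ by processing vertices in a traversal of $S$ starting from $U$ (or an arbitrary vertex if $U = \emptyset$), arranged so that the vertices of $U \cup I \cup N_S(I)$ are all processed in an initial block. When embedding $v$: if $v = z_j \in I$, set $\phi(v) := \pi(z_j)$; if $v$ is the traversal-parent of some $z_j \in I$, pick $\phi(v)$ from $N_G(\phi(\mathrm{parent}(v))) \cap N_G(\pi(z_j)) \cap (X^- \setminus X^{\mathrm{forb}}) \setminus \mathrm{Im}(\phi)$; otherwise pick $\phi(v)$ from $N_G(\phi(\mathrm{parent}(v))) \cap (X^- \setminus X^{\mathrm{forb}}) \setminus \mathrm{Im}(\phi)$. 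The disjointness of the bare paths guarantees that no vertex is the traversal-parent of more than one $z_j$, so each candidate set has size at least $|X^- \setminus X^{\mathrm{forb}}| - 2\Delta(H) - |\mathrm{Im}(\phi)|$ and remains nonempty as long as we are not near the end.

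The hard part is completing the embedding when $|V(S)| = |X^-|$ gives no overall slack. The plan is to halt the greedy after $U \cup I \cup N_S(I)$ has been embedded and only a small buffer of slots remains in $X^-$, and finish via Hall's generalized matching criterion (Lemma~\ref{lem:hall}) applied to the bipartite graph between the remaining vertices of $S$ and the remaining slots of $X^- \setminus X^{\mathrm{forb}}$. Each remaining vertex $v$ is constrained only to map into $N_G(\phi(\mathrm{parent}(v)))$, losing at most $\Delta(H) \le n/50$ candidates. For Hall's condition, subsets $T$ with $|T|$ at most the individual candidate lower bound are handled trivially, while for larger $|T|$ one shows that the non-covered slots form a common non-neighborhood of too many distinct parent-images to exist; this is where one must choose the traversal order carefully (DFS-like, exploring bare paths first) so that the buffer is distributed across many distinct traversal-parents rather than concentrated among siblings of a single parent. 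The resulting matching completes the embedding.
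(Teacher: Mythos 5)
Your high-level plan — cover $X^{\mathrm{forb}}$ by an independent set of degree-$2$ vertices sitting on bare paths, embed the bulk greedily, and close with Hall — is the right shape, and your counting of degree-$\ge 3$ vertices is correct. But the step you yourself flag as ``the hard part,'' the endgame, is a genuine gap, and the fix you sketch does not work as described. For the final Hall step to be a bipartite matching problem at all, the buffer (the last $m$ vertices of your order) must be an independent set of $S$ all of whose $S$-neighbours are already embedded; a generic suffix of a parent-first traversal has no such property, since a remaining vertex may have a remaining child whose constraint is not yet determined. Moreover, for the ``large $T$'' case of Hall you need the buffer to contain $\Omega(n)$ vertices whose already-embedded neighbour-images are pairwise distinct (indeed pairwise disjoint as pairs), so that any slot $b$, having at most $n/50$ non-neighbours, excludes at most $n/50$ buffer vertices. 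Leaves cannot supply this buffer (there may be only $2$ of them), and ``exploring bare paths first'' puts the bare-path vertices at the start of the order, not the end; consecutive vertices of a bare path are adjacent in any case, so they cannot all sit in an independent suffix. What is needed is precisely Krivelevich's bare-path lemma (Lemma~\ref{lemma_barepathsoflengthtwo} in the paper): extract a set $Y$ of at least $n/22$ degree-$2$ vertices with pairwise disjoint closed neighbourhoods, embed $S-Y$ first by the greedy lemma (using the $|Y|$ units of slack this creates), and only then match $Y$ to the leftover slots.

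A second, related point: pre-assigning a bijection $\pi\colon I\to X^{\mathrm{forb}}$ with $|I|=|X^{\mathrm{forb}}|$ is an unnecessary rigidity that also leaves the endgame unsolved, since after placing $I$ you still have zero global slack for the rest of the tree. The paper instead leaves all of $Y$ unembedded; the leftover slot set $B$ then automatically contains $X^{\mathrm{forb}}$, and the final Hall matching of $Y$ onto $B$ covers $X^{\mathrm{forb}}$ by degree-$2$ vertices without ever choosing in advance which ones. (The paper also embeds the neighbours of $Y$ into slots of low $H$-degree, which makes the ``small $T$'' case of Hall clean: $|B_y|\ge |B|-2n/1000$.) I would recommend restructuring your argument along these lines; as written, the proof is incomplete at its critical step.
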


 We will prove Lemma~\ref{lemma_fillingwithbarepaths} by a natural argument which first embeds the tree with a large set of degree $2$ vertices removed, and then completes the embedding by mapping those degree $2$ vertices appropriately. To do this, we will need a large well-separated set of degree $2$ vertices, as guaranteed by the following lemma, which is a rather special case of a more general lemma of Krivelevich~\cite{krivelevich2010embedding} on bare paths in trees.

 \begin{lemma}[\cite{krivelevich2010embedding}, Lemma 2.1]\label{lemma_barepathsoflengthtwo}
 If $T$ is a tree with $n$ vertices and $\ell$ leaves, then $T$ contains a set $Y$ of degree $2$ vertices such that $|Y|\geq \frac{n-3(2\ell-2)}{3}$ and the sets $\{y\}\cup N_T(y)$ (over all $y\in Y$) are pairwise disjoint.
 \end{lemma}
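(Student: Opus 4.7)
The plan is to prove this directly by decomposing $T$ into maximal paths whose internal vertices all have degree $2$ (``bare paths''), and then picking every third internal vertex of each such path while staying away from the endpoints.

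First, I would classify the vertices of $T$ by degree as $V_1, V_2, V_{\geq 3}$ and recall the standard handshake estimate: from $\sum_v d_T(v) = 2(n-1)$ together with $|V_1|+|V_2|+|V_{\geq 3}|=n$ and the bound $d_T(v)\geq 3$ on $V_{\geq 3}$, one gets $|V_{\geq 3}|\leq \ell -2$. Now consider the reduced tree $T^*$ on vertex set $V_1\cup V_{\geq 3}$ obtained by contracting each maximal run of degree-$2$ vertices of $T$; this $T^*$ is a tree with $\ell + |V_{\geq 3}|\leq 2\ell -2$ vertices, and hence at most $2\ell -3$ edges. Each edge of $T^*$ corresponds to a unique maximal bare path $P_j = (u_{j,0},u_{j,1},\ldots,u_{j,k_j})$ in $T$, where $u_{j,0},u_{j,k_j}\in V_1\cup V_{\geq 3}$ and $u_{j,1},\ldots,u_{j,k_j-1}\in V_2$. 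Writing $s$ for the number of such bare paths, we have $s\leq 2\ell -3$ and, since each edge of $T$ belongs to exactly one bare path, $\sum_{j=1}^{s} k_j = n-1$.

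Next I would select, from each bare path $P_j$, the vertex set $Y_j := \{u_{j,i} : 2\leq i \leq k_j-2,\ i\equiv 2\pmod 3\}$. By construction, every $y\in Y_j$ is a degree-$2$ vertex whose closed neighbourhood $\{y\}\cup N_T(y)$ is $\{u_{j,i-1},u_{j,i},u_{j,i+1}\}$, which lies strictly inside $\{u_{j,1},\ldots,u_{j,k_j-1}\}$, i.e.\ it contains no endpoint of $P_j$. It follows that closed neighbourhoods of vertices in different $Y_j$'s are disjoint (they lie in disjoint internal portions of distinct bare paths), while the spacing condition within a single $P_j$ guarantees that the closed neighbourhoods inside $Y_j$ are pairwise disjoint too. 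Setting $Y := \bigcup_j Y_j$ therefore gives a valid set.

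Finally, one counts: $|Y_j|\geq (k_j-3)/3$ for every $j$ (when $k_j\leq 3$ the left side is $0$ and the right side is non-positive, and otherwise this is the standard count of an arithmetic progression with common difference $3$ inside $[2,k_j-2]$), so
\[
|Y| \;=\; \sum_{j=1}^{s} |Y_j| \;\geq\; \sum_{j=1}^{s}\frac{k_j-3}{3} \;=\; \frac{(n-1) - 3s}{3} \;\geq\; \frac{n-1-3(2\ell-3)}{3} \;=\; \frac{n-6\ell+8}{3} \;\geq\; \frac{n-3(2\ell-2)}{3},
\]
giving the desired bound. The only real subtlety is the bookkeeping in step three: one must verify that, after avoiding the positions $1$ and $k_j-1$ in each bare path, distinct bare paths contribute genuinely disjoint closed neighbourhoods even when they share a branch endpoint in $V_{\geq 3}$, but this is exactly what the choice $i\geq 2$ and $i\leq k_j-2$ enforces, so no additional obstacle arises.
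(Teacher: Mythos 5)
The paper does not prove this lemma itself — it quotes it from Krivelevich \cite{krivelevich2010embedding} — so there is no in-paper argument to compare against; your proof is a correct, self-contained version of the standard argument there (degree-sum count giving $|V_{\geq 3}|\leq \ell-2$, decomposition into at most $2\ell-3$ maximal bare paths, and selecting every third internal vertex away from the endpoints). The bookkeeping checks out: $|Y_j|=\lfloor (k_j-1)/3\rfloor\geq (k_j-3)/3$ for $k_j\geq 4$ and the inequality holds trivially for $k_j\leq 3$, and your final bound $\frac{n-6\ell+8}{3}$ is in fact slightly stronger than the stated $\frac{n-3(2\ell-2)}{3}=\frac{n-6\ell+6}{3}$.
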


\begin{proof}[Proof of~\cref{lemma_fillingwithbarepaths}]
	Let $X^{\text{low}}=\{v\in X^-:d_H(v)\geq n/1000\}$, and observe that $|X^{\text{low}}|\leq \frac{2e(H)}{n/1000}\leq 2000\eps n$. By~\cref{lemma_barepathsoflengthtwo}, we can find a set $Y$ of degree $2$ vertices in $S$ such that $|Y|\geq n/22$, and the sets $\{y\}\cup N_T(y)$ ($y\in Y$) are pairwise disjoint from each other and also from the set of connector vertices of $S$ in $T$.

Let $U$ be a subset of the connector vertices of $S$ in $T$, and let $\phi$ be a given embedding of $S[U]$ in $G[X^-\setminus X^\text{forb}]$. By a greedy algorithm, we can extend $\phi$ to an embedding of $S[V(S)\setminus Y]$ in $G[(X^-\setminus(X^\text{forb}\cup X^{\text{low}}))\cup \phi(U)]$. Indeed,~\cref{lemma_greedy} applies, as
\[
|(X^-\setminus(X^\text{forb}\cup X^{\text{low}}))\cup \phi(U)|-(|S|-|Y|)-2\Delta(H)\geq |S|-40\lambda n-2000\eps n-\Big(|S|-\frac{n}{22}\Big)-2\cdot \frac{n}{50}>0.
\]

Let $B=X^-\setminus \phi(V(S)\setminus Y)$. Note that $|B|=|Y|$ and $X^\text{forb}\subset B$. For each $y\in Y$, let $x_y,z_y$ be the two neighbours of $y$ in $S$ (labelled arbitrarily), and let $B_y=N_G(\phi(x_y))\cap N_G(\phi(z_y))\cap B$. Note that if we can match $Y$ and $B$ with some bijection $\psi$ so that $\psi(y)\in B_y$ for all $y\in Y$, then, putting together $\psi$ and $\phi$ gives an embedding of $S$ in $G[X^-]$, extending the original embedding of $S[U]$, such that $B$ is covered by $Y$ (and hence $X^\text{forb}$ is covered by an independent set of non-connector degree $2$ vertices). Thus, to complete the proof of the lemma, it suffices to find such a matching.
We will show that such a bijection $\psi$ exists using Hall's matching criterion (see \cref{lem:hall}).

Note that, for all $y\in Y$, we have $\phi(x_y),\phi(z_y)\in X^-\setminus X^{\text{low}}$, and hence $|B_y|\geq |B|-2n/1000=|Y|-n/500$. Thus, if $Y'\subset Y$ is non-empty with $|Y'|\leq |Y|-n/500$, then $|\bigcup_{y\in Y'}B_y|\geq |Y'|$. On the other hand, if $Y'\subset Y$ with $|Y'|> |Y|-n/500$, then $|Y'|>n/22-n/500>n/50$. But $\delta(G)\geq 49n/50$, so, for any $b\in B$, there are at most $n/50$ choices of $y\in Y$ such that $\{x_y,z_y\}\not \subset N_G(b)$. It follows that if $Y'\subset Y$ with $|Y'|> |Y|-n/500$ then $\bigcup_{y\in Y'}B_y=B$, so the conditions of~\cref{lem:hall} are indeed satisfied and the result follows. 
\end{proof}


\subsection{Deducing the second key lemma}\label{section_key7}
Finally, we can prove our second key lemma, which we restate first for convenience.
\setcounter{restatedpropcounter2}{\value{propcounter}}
\setcounter{propcounter}{\value{restatedpropcounter}}

\lemmafinalembedding*

\setcounter{propcounter}{\value{restatedpropcounter2}}

\begin{proof}
Let $U\subset V(T)$ be given with $|U|\leq 2$, and let $\phi$ be a given embedding of $T[U]$ in $G[A^-]$. From now on, whenever a subtree $T'$ of $T$ intersects $U$, each vertex in $U\cap V(T')$ will also be regarded as a connector vertex. (This can be made precise, for example, by regarding $T$ as a subtree of a larger tree with an additional leaf edge added incident to each point of $U$.) Thus, for example, $T$ is a tree with at most $2$ connector vertices.

Using Lemma~\ref{lem_endblobs0} and Lemma~\ref{lem_endblobs1}, we can take a tree decomposition of $T$ into subtrees $F_1,F_2, F_3, \dots,F_q$ (for some $q\geq 3$) such that each of these trees contain at most $2$ connector vertices (including the vertices in $U$), $|F_3|\geq n/7$, and for each $i\in[2]$ we have $n/1000\leq|F_i|\leq n/100$. (Indeed, we may use~\cref{lem_endblobs0} to divide $T$ into $4$ trees with at most $2$ connector vertices each, one of which has between $n/7$ and $6n/7$ vertices. Then, taking the largest one of the other three trees, which has at least $n/21$ vertices, we may use~\cref{lem_endblobs1} to divide it into several trees with at most $2$ connector vertices each, such that two of those trees have size between $n/1000$ and $6 n/1000$.) Let $F_1$ and $F_2$ have $\ell$ and $\ell'$ non-connector leaves, respectively. Without loss of generality, we may assume that $\ell\leq \ell'$.

Let $C$ be the set of connector vertices of $F_2$ (so that $|C|\leq 2$), and extend $\phi$ arbitrarily to an embedding of $T[U\cup C]$ in $G[A^-]$, using $|A^-|\geq 16$. Pick also a subset $A^\text{forb}\subset A^-\setminus \phi(U\cup C)$ of size $3$, to be reserved for a later application of~\cref{lemma_gapfilling}. Let $X^\text{low}=\{v\in X\setminus A^-:d_H(v)\geq \frac{n}{200}\}$, and note that $|X^\text{low}|\leq \frac{2e(H)}{n/200}\leq 400\eps n$. Let us write
\[X^\text{forb}=X^\text{low}\cup (A\setminus A^-),\]
so that $|X^\text{forb}|\leq 400\eps n+20r=420\eps n$. We reserve $X^\text{forb}$ to be covered at the very end by $F_3$, in an application of~\cref{lemma_fillingwithbarepaths}.

For each $i\in [k]$, let $m_i=|W_i^-|$, so that
\[
10^6\bigg(\sum_{i\in [k]}(m_i+1)\bigg)\leq 10^6(|W|+r)\leq 2\cdot 10^6r\leq |F_1|/150,
\]
as $|F_1|\geq n/1000$. Using Lemma~\ref{lemma_endblobs}, we can then take a tree decomposition of $F_1$ into $T_1,\ldots,T_{k'}$ (for some $k'\geq k$) so that, for each $i\in [k]$, $10^6(m_i+1)\leq |T_i|\leq 6\cdot 10^6(m_i+1)$, and each $T_i$ has at most $2$ connector vertices. For each $i\in [k]$, let $T_i$ have $\ell_i$ non-connector leaves, so that $T_i$ is then a $(36|W_i^-|,\ell_i)$-end of $T$ and $\sum_{i\in[k]}\ell_i\leq \ell$, as $\ell$ is the number of non-connector leaves of $F_1$. Let $C_0$ be the set of connector vertices in $T_1,\dots,T_{k'}$, and let $C'=U\cup C\cup C_0\cup\bigcup_{i\in[k']\setminus[k]}V(T_i)$ (equivalently, $C'$ is the set obtained from $U\cup C\cup V(F_1)$ after removing the non-connector vertices of $T_i$ for each $i\in[k]$). As $|C'|\leq |F_1|+4\leq n/50$, we can (by~\cref{lemma_greedy} with $\kappa=|U\cup C|\leq 4$) greedily extend $\phi$ to an embedding of $T[C']$ in $G[X^-\setminus (X^\text{forb}\cup A^\text{forb})]$. 

We can then extend $\phi$ to be defined on each $V(T_i)$, $i\in [k]$, as follows. For each $i\in [k]$ in turn, write 
$$X^\text{used}_{i-1}=X^-\cap \phi\bigg(\bigg(C'\cup\bigcup_{j\in[i-1]}V(T_j)\bigg)\setminus V(T_i)\bigg)$$ for the set of vertices in $X^-$ already in the image of $\phi$ (but not used as the image of a connector vertex of $T_i$). Then set
$$X_i^-=X^-\setminus \left(X^\text{used}_{i-1}\cup X^\text{forb}\cup A^\text{forb}\right).$$
Note that $|X_i^-|\geq |X^-|-(|F_1|+4)-420\eps n-3\geq (\frac{99}{100}n-|W^-|)-n/50>5n/6$. Also, $|T_i|\leq n/50$ as $T_i$ is a subgraph of $F_1$.
Thus, recalling \itref{cond_finalembedlemma2} and \itref{cond_finalembedlemma3} too, we can apply~\cref{lemma_eitherend} to get an embedding of $T_i$ in $W_i^-\cup X_i^-$ (agreeing with our embedding $\phi$ on the connector vertices) such that at most $\frac{\ell_i}{100}$ vertices in $W_i^-$ are not in the image, and all the vertices in $W_i^-$ in the image have at most $1$ rightward edge, except the leftmost vertex of $W_i^-$, which has at most $2$ rightward edges. This gives an extension of $\phi$ which covers $V(T_i)$.

After the final one of these step-by-step extensions, we have an extension of $\phi$ embedding $T[U\cup C\cup V(F_1)]$ in $G[W^-\cup (X^-\setminus (X^\text{forb}\cup A^\text{forb}))]$ such that at most $\sum_{i\in [k]}\frac{\ell_i}{100}\leq \frac{\ell}{100}$ vertices of $W^-$ are uncovered, and each vertex of $W^-$ in the image has at most $1$ rightward edge in the embedded copy, except, for each $i\in[k]$, the leftmost vertex of $W_i^-$, which has at most $2$ rightward edges.

We will now use~\cref{lemma_gapfilling} to fill in the gaps left in $W^-$, using the subtree $F_2$. Let $W_0^-=W^-\setminus \phi(V(F_1))$ be the set of vertices in $W^-$ not yet covered, let $X_{k+1}^{\text{used}}=X^-\cap \phi((U\cup V(F_1))\setminus C)$ be the set of points in $X^-$ that are already used (apart from the images of connector vertices of $F_2$), and let $X_0^-=X^-\setminus (X_{k+1}^\text{used}\cup X^\text{forb})$. Write $A_0^-=\phi(C)\cup A^\text{forb}$. We will now check that we can apply \cref{lemma_gapfilling} for the graph $G$ (with complement $H$), tree $T$ with subtree $F_2$, $W_0^-$ taking the role of $W^-$, $X_0^-$ taking the role of $X_0$, and $A^-$ being given by $A_0^-$.

First, note that, using the bounds for $|X_i^-|$ ($i\in[k])$ above, we easily get $|X_0^-|\geq 3n/4$. Also, note that $A_0^-\subset X_0^-$ and $|A_0^-|\geq 3$. Moreover, as $A_0^-\subset A^-$, we have that $H[A_0^-]$ is empty, and there are no edges between $W_0^-$ and $A_0^-$ in $H$. Also, $F_2$ has $\ell'\geq \ell\geq |W_0^-|$ non-connector leaves. Thus, as \eref{cond_finalembedlemma3} holds, all conditions of~\cref{lemma_gapfilling} are satisfied, and we can find an embedding of $F_2$ in $W_0^-\cup X_0^-$, agreeing with $\phi$ on the set $C$ of connector vertices, such that each point of $W^-_0$ is in the image and is covered by a leaf. Thus, we get an extension of $\phi$ embedding $T[U\cup V(F_1)\cup V(F_2)]$ in $G[W^-\cup (X^-\setminus X^\text{forb})]$ such that all points in $W^-$ are covered (by non-connector vertices), and each such point has at most $1$ rightward edge in the embedded copy, except, for each $i\in [k]$, the leftmost vertex of $W^-_i$, which has at most $2$ rightward edges. Thus, if we can extend this $\phi$ to embed $T$ in $G[W^-\cup X^-]$, \itref{prop:S1} will be satisfied.

It is left, then, to embed the subtrees $F_3,\dots,F_q$ appropriately using the remaining vertices of $X^-$. As $\Delta(H)\leq n/50$ and $T[U\cup V(F_1)\cup V(F_2)]$ has at most $4$ connected components, and $|F_3|\geq n/7$, we can greedily (by~\cref{lemma_greedy}) extend $\phi$ to be an embedding of $T[U\cup\bigcup_{i\in[q]\setminus\{3\}} V(F_i)]$ in $G[W^-\cup (X^-\setminus X^\text{forb})]$. Let $X^\text{final}=X^-\setminus\phi(V(T)\setminus V(F_3))$ be the set of vertices to be covered by $F_3$, i.e., the vertices not yet covered together with the images of the connector vertices of $F_3$, so that $|X^\text{final}|=|F_3|$ and $X^\text{forb}\subset X^\text{final}$. Recall that $|F_3|\geq n/7$, $|X^\text{forb}|\leq 420\eps n$, and $T$, and hence $F_3$, has at most $\lambda n$ leaves. Hence, by~\cref{lemma_fillingwithbarepaths}, there is an embedding of $F_3$ in $G[X^\text{final}]$, agreeing with $\phi$ on the connector vertices of $F_3$, such that the set $X^\text{forb}$ is covered by an independent set of non-connector degree $2$ vertices. Thus, we get an extension of $\phi$ embedding $T$ in $W^-\cup X^-$, satisfying all the required conditions. Indeed, we have already seen that \itref{prop:S1} will hold, and \itref{prop:S2} follows immediately from the fact that $X^\text{forb}$ is covered by degree $2$ vertices. Finally, \itref{prop:S3} holds, since  $A\setminus A^-$ is covered by an isolated set of non-connector vertices in $F_3$, and no vertex in $W^-$ is covered by a vertex of $F_3$. 
\end{proof}




\section*{Acknowledgements}
The authors would like to thank Mat\'ias Pavez-Sign\'e for discussions around the methods in~\cite{havet2020variant}, and the referee for their detailed review which improved this paper.

\bibliographystyle{abbrv}
\bibliography{tpc}

\end{document}